\newcommand{\rl}{\mathbb R}
\newcommand{\R}{\mathbb R}
\newcommand{\Int}{\text{Int}\,}
\newcommand{\T}{\mathbb T}
\newcommand{\inte}{\mathbb N}
\newcommand{\N}{\mathbb N}
\newcommand{\Px}{\mathcal{P}}
\newcommand{\Z}{\mathbb Z}
\newtheorem{rmk}{Remark}
\newtheorem{thm}{Theorem}[section]
\newtheorem{prop}[thm]{Proposition}
\newtheorem{lem}[thm]{Lemma}
\newcommand{\J}{J^u}
\begin{document}

\title[Thermodynamics of Towers of Hyperbolic Type]{Thermodynamics of Towers of Hyperbolic Type}

\author{Y. Pesin}
\address{Department of Mathematics, Pennsylvania State
University, University Park, PA 16802, USA}
\email{pesin@math.psu.edu}
\author{S. Senti}
\address{Instituto de Matematica, Universidade Federal do Rio de Janeiro, C.P. 68 530, CEP 21941-909, R.J., Brazil}
\email{senti@im.ufrj.br}
\author{K. Zhang}
\address{Department of Mathematics,
University of Toronto, Toronto, Ontario, Canada}
\email{kzhang@math.utoronto.ca}


\thanks{Y. Pesin is partially supported by the
National Science Foundation grant \#DMS-1101165. S. Senti acknowledges the support of the CNPq and CAPES} \subjclass{37D25, 37D35, 37E30, 37E35}

\begin{abstract} We introduce a class of continuous maps
$f$ of a compact topological space $X$ admitting inducing schemes of hyperbolic type and describe the associated tower constructions. We then establish a thermodynamic formalism, i.e., we describe a class of real-valued potential functions $\varphi$ on $X$ such that $f$ possess a unique equilibrium measure $\mu_\varphi$, associated to each $\varphi$, which minimizes the free energy among the measures that are liftable to the tower. We also describe some ergodic properties of equilibrium measures including decay of correlations and the central limit theorem. We then study the liftability problem and show that under some additional assumptions on the inducing scheme every measure that charges the base of the tower and has sufficiently large entropy is liftable. Our results extend those obtained in \cite{PesSen05, PesSen08} for inducing schemes of expanding types and apply to certain multidimensional maps. Applications include obtaining the thermodynamic formalism for Young's diffeomorphisms, the H\'enon family at the first birfucation and the Katok map. In particular for the Katok map we obtain the exponential decay of correlations for equilibrium measures associated to the geometric potentials with $0\le t<1$.
\end{abstract}

\maketitle

\section{Introduction}

For a continuous map $f$ of a compact metric space $X$ and a \emph{potential} function $\varphi$, an invariant Borel probability measure $\mu_\varphi$ is called an \emph{equilibrium measure} if
$$
h_{\mu_\varphi}(f)+\int_X\varphi d\mu_\varphi =\sup_{\mathcal{M}(f,X)}\{h_\mu(f)+\int_X\varphi d\mu\},
$$
where $\mathcal{M}(f,X)$ is the class of all $f$-invariant ergodic Borel probability measures. For maps admitting inducing schemes for some classes of potential functions (sometimes discontinuous), methods in \cite{PesSen08} prove the  existence of unique equilibrium measures within the class $\mathcal{M}_L(f,Y)$ of all \emph{liftable measure} (to be defined later) on an invariant subset $Y\subset X$. In some cases (for example, for some unimodal maps $f$ and the family of geometric potentials $\varphi_t=-t\log|f'|$), these equilibrium measures are shown to be the equilibrium measures in the classical sense.

The inducing schemes introduced in \cite{PesSen05, PesSen08} were designed to study non-uniformly expanding maps and can be thus called inducing schemes of \emph{expanding type}. In this paper we mainly consider maps which admit inducing schemes of \emph{hyperbolic type}. We shall show that this class of maps includes diffeomorphisms introduced by Young in \cite{You98}. They allow to construct a \emph{Young tower} which yields many interesting applications. 

While maps with inducing schemes of expanding type are modeled by towers over the full shift of countable type on the space of \emph{one-sided} infinite sequences, the maps with inducing schemes of hyperbolic type are modeled by towers over the full shift of countable type on the space of \emph{two-sided} infinite sequences. This affects our study in two directions. First, one needs to extend the  thermodynamic formalism for the full shift on the space of one-sided infinite sequences over a countable alphabet type to the full shift on the space of two-sided infinite sequences over a countable alphabet. This is done using the techniques due to Bowen \cite{Bow75} and Sarig \cite{Sar99}. 

Second, one need to solve the \emph{liftability problem} for towers of hyperbolic type, i.e., describe the class of measures that ``charge the tower'' (i.e., give positive weight to the base of the tower) and that can be lifted. In \cite{PesSenZha08}, we described a method to study the liftability problem for inducing schemes of expanding type; however, inducing schemes of hyperbolic type require a completely new approach that we develop here. Roughly speaking it states that if the number 
$S_n$ of the elements in the base of the tower with inducing time $n$ grows exponentially with an exponent $0\le h<h_{\text{top}}(f)$, then every measure $\mu$ whose metric entropy $h_\mu(f)>h$ is liftable. 

Reducing thermodynamic formalism to measures of large entropy is neither surprising nor artificial. Indeed, beyond the setting of the classical thermodynamics of uniformly hyperbolic maps and H\"older continuous potentials, measures of small (or even zero entropy) often appear as somewhat ``non-essential'' equilibrium measures that come in pairs with ``essential'' equilibrium measures of large entropy (see \cite{PesZha06}). The exponent $h$ in the exponential growth of the number $S(n)$ is a natural cut-off between the classes of essential and non-essential equilibrium measures. 

The present article is structured as follows. In Section 2 we introduce inducing schemes of hyperbolic type and their symbolic representation by a tower. In Section 3 we discuss thermodynamics of the full shift on the space of two-sided sequences over a countable alphabet. Our main result in this section is Theorem \ref{gibbs2} which provides conditions on the potential functions that guarantee existence and uniqueness of equilibrium measures and describes their ergodic properties. Theorem \ref{sarig1} establishes the exponential decay of correlations and the Central Limit Theorem (CLT) for equilibrium measures associated to locally H\"older continuous potentials. 

We use these results in Section 4 to establish the thermodynamic formalism for maps with inducing schemes of hyperbolic type. In particular, we present \emph{verifiable} conditions on potentials which guarantee existence and uniqueness of the associated equilibrium measures and allow the description of their ergodic properties (including exponential decay of correlations and CLT; see Theorems \ref{liftgibbs} and \ref{equilibrium1}).  

In Section 5 we solve the liftability problem for towers associated with inducing schemes of hyperbolic type. In Theorem~\ref{liftability} two additional conditions (L1) and (L2) are imposed on the inducing scheme to ensure that any invariant ergodic probability measure which charges the base of the tower and has sufficiently large entropy is liftable. 

In Sections 6 and 7 we apply our results to establish the thermodynamic formalism for Young's diffeomorphisms. In Proposition~\ref{Jacobian} we construct inducing schemes for these maps that satisfy all our requirements and prove the existence and uniqueness of equilibrium measures within the class of liftable measures for the geometric potentials 
$\varphi_t(x):=-t\log|\det(df|E^u(x))|$ with $t_0<t\le 1$ and some $t_0<0$ (here $E^u(x)$ denotes the unstable subspace at the point $x$, see Theorem \ref{geom_poten}). In particular, this proves uniqueness of measures of maximal entropy (within the class of liftable measures). We also show that for $t_0<t< 1$ the equilibrium measures have exponential decay of correlations and satisfy the Central Limit Theorem (see Theorem \ref{geom_poten1}). We comment that in general the equilibrium measure for $t=1$ may not have exponential decay of correlations. 

In Sections 8 and 9 we present two particular examples of maps -- the H\'enon family at the first bifurcation and the Katok map. The first example is studied in \cite{SenTak13} and the second one in \cite{PesSenZha14} and both are briefly described here. For these examples we show how to construct inducing schemes of hyperbolic type and  use our results to establish existence and uniqueness of equilibrium measures for the corresponding geometric potentials and we also show that these equilibrium measures have exponential decay of correlations and satisfy the Central Limit Theorem. In these particular eaxmples the inducing time is the first return time to the base of the tower and hence, every measure that charge the base is liftable.

{\bf Acknowledgments.} The authors wish to thank J. Buzzi for suggestions in dealing with the liftability of measures. 

Part of this work was done while the authors visited the Centre Interfacultaire Bernoulli (CIB), Ecole Polytechnique Federale de Lausanne, Switzerland. We would like to thank the Center for their hospitality. 

S. Senti would like to thank the support of Penn State Mathematics department and 
the Shapiro Visitor Program.

\section{Maps with inducing schemes}
\label{sec:ind-hyperbolic}

Let $f:X\to X$ be a continuous map of a compact metric space $(X, d)$. Throughout the paper we shall assume that $f$ has finite topological entropy $h_{top}(f)<\infty$.

Given a countable collection of disjoint Borel sets $S=\{J\}$ and  a positive integer-valued function $\tau: S\to\inte$, we say that  $f$ admits an \emph{inducing scheme of hyperbolic type} $\{S, \tau\}$, with \emph{inducing domain} 
$W:=\bigcup_{J\in S}J$ and \emph{inducing time} 
$\tau:X\to \inte$ defined by
$$
\tau(x):=
\begin{cases}
\tau(J), &  x\in J \\
0, &  x\notin  W
\end{cases}
$$
provided the following two conditions (I1)--(I2) hold:
\begin{enumerate}
\bigskip
\item[{\bf (I1)}] For any $J\in S$ we have
$$
f^{\tau(J)}(J)\subset W \quad\mbox{ and }\quad\bigcup_{J\in S}f^{\tau(J)}(J)=W.
$$
Moreover, $f^{\tau(J)}|J$ can be extended to a homeomorphism of a neighborhood of $J$;
\end{enumerate}
\bigskip
Condition (I1) allows one to define the \emph{induced map} 
$F\colon W\to W$ by 
$$
F|J:=f^{\tau(J)}|J, \qquad J\in S.
$$
For each $J\in S$, the map $F|J$ can be extended to the closure $\bar{J}$ by continuity. However, in general the extensions may not agree on points in $\bar{J}\cap \bar{J'}$ for some $J, J'\in S$ (see Proposition~\ref{conjugacy}).

If $f$ is invertible and $\tau$ is the first return time to $W$, then all images $f^{\tau(J)}(J)$ are disjoint. 
However, in general the sets $f^{\tau(J)}(J)$ corresponding to different $J\in S$ may overlap. In this case the map $F$ may not be invertible. 

\begin{enumerate}
\bigskip
\item[{\bf (I2)}] 
For every bi-infinite sequence $\underline{a}=(a_n)_{n\in\Z}\in S^{\Z}$ there exists a \emph{unique} sequence $\underline{x}=\underline{x}(\underline{a})=(x_n=x_n(\underline{a}))_{n\in\Z}$ such that
\begin{enumerate}
\item $x_n\in\overline{J_{a_n}}\quad\mbox{ and }\quad f^{\tau(J_{a_n})}(x_n)=x_{n+1}$;
\item if $x_n(\underline{a})=x_n(\underline{b})$ for all $n\le 0$ then 
$\underline{a}=\underline{b}$. 
\end{enumerate}
\end{enumerate}
\bigskip
Condition (I2) allows one to define the \emph{coding map} 
$\pi\colon S^{\Z}\to \cup\overline{J}$ by
\begin{equation}\label{def:pi}
\pi(\underline{a}):=x_0(\underline{a}).
\end{equation}
\noindent
Denote by $\sigma:S^\Z\to S^\Z$ the full left shift and let
$$
\check{S}:=\{\underline{a}\in S\colon x_n(\underline{a})\in J_{a_n}\text{ for all } n\in\Z\}.
$$
For any $\underline{a}\in S^\Z\setminus\check{S}$ there exists $n\in\Z$ such that $\pi\circ\sigma^n(\underline{a})\in \overline{J_{a_n}}\setminus J_{a_n}$. In particular, if all $J\in S$ are closed then $S^\Z\setminus\check{S}=\emptyset$.
However, this need not always be the case.

\begin{prop}\label{conjugacy}
The map $\pi$ given by \eqref{def:pi} has the following properties:
\begin{enumerate}
\item $\pi$ is well defined, continuous and for all $\underline{a}\in S^\Z$ one has
$$\pi\circ\sigma(\underline{a})=f^{\tau(J)}\circ\pi(\underline{a})
$$ where $J\in S$ is such that $\pi(\underline{a})\in \bar{J}$;
\item $\pi$ is one-to-one on $\check S$ and $\pi(\check S)=W$; 
\item if $\pi(\underline{a})=\pi(\underline{b})$  for some $\underline{a}, \underline{b}\in\check{S}$ then $a_n=b_n$ for all $n\ge 0$.
\end{enumerate}
\end{prop}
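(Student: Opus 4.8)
The plan is to verify the three properties in order, deriving each directly from conditions (I1) and (I2) together with the definition \eqref{def:pi} of $\pi$.

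First I would establish that $\pi$ is well defined and continuous, and check the equivariance relation in item (1). Well-definedness is immediate from the uniqueness clause in (I2): the sequence $\underline{x}(\underline{a})$ is unique, so $x_0(\underline{a})$ is unambiguously specified. For the equivariance relation, fix $\underline{a}\in S^\Z$ and let $\underline{y}$ be the sequence assigned by (I2) to $\sigma(\underline{a})$; I would show that $\underline{y}_n = \underline{x}_{n+1}(\underline{a})$ by checking that the shifted sequence $(x_{n+1}(\underline{a}))_{n\in\Z}$ satisfies the two defining properties (a) and (b) of (I2) for $\sigma(\underline{a})$, and then invoke uniqueness. Property (a) for the shifted sequence follows from property (a) for $\underline{x}(\underline{a})$, and property (b) is inherited since the pasts of a shifted sequence are tails of the original. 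Consequently $\pi(\sigma(\underline{a})) = y_0 = x_1(\underline{a}) = f^{\tau(J_{a_0})}(x_0(\underline{a})) = f^{\tau(J)}(\pi(\underline{a}))$ where $J = J_{a_0}$ is the element with $\pi(\underline{a})\in\bar J$. For continuity, I expect to argue that if $\underline{a}^{(k)}\to\underline{a}$ in $S^\Z$ (i.e., agreement on longer and longer central blocks), then any accumulation point $z$ of $\pi(\underline{a}^{(k)})$ in the compact set $\bigcup\overline{J}$, together with the accumulation of the associated orbit segments, produces a sequence satisfying (I2) for $\underline{a}$; uniqueness forces $z = \pi(\underline{a})$, so $\pi(\underline{a}^{(k)})\to\pi(\underline{a})$. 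This compactness-plus-uniqueness argument is the part I expect to require the most care, since one must ensure the limiting orbit still lands in the closures $\overline{J_{a_n}}$ and still satisfies the injectivity-on-pasts clause (b); the latter is the delicate point because property (b) is not obviously a closed condition, and I would need to exploit condition (I1)'s extension of $f^{\tau(J)}|J$ to a homeomorphism of a neighborhood of $J$ to push the argument through.

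Next, for item (2), I would show $\pi$ is one-to-one on $\check S$ and that $\pi(\check S) = W$. For injectivity: if $\underline{a},\underline{b}\in\check S$ with $\pi(\underline{a}) = \pi(\underline{b})$, then in particular $x_0(\underline{a}) = x_0(\underline{b})$; applying the equivariance of item (1) repeatedly (using that for sequences in $\check S$ the points $x_n$ lie in the open pieces $J_{a_n}$, so the element $J$ in item (1) is genuinely determined), I would get $x_n(\underline{a}) = x_n(\underline{b})$ for all $n\le 0$, and then clause (b) of (I2) gives $\underline{a} = \underline{b}$. For $\pi(\check S) = W$: the inclusion $\pi(\check S)\subset W$ follows since $x_0(\underline{a})\in J_{a_0}\subset W$ for $\underline{a}\in\check S$. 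For the reverse inclusion, given $x\in W$, say $x\in J$ with $F(x) = f^{\tau(J)}(x)\in W$ by (I1), I would iterate $F$ forward to produce $(a_n)_{n\ge 0}$ with $x_n\in J_{a_n}$, and iterate backward using surjectivity $\bigcup_J f^{\tau(J)}(J) = W$ from (I1) to produce $(a_n)_{n<0}$; the resulting bi-infinite sequence lies in $\check S$ and codes $x$, and by the uniqueness in (I2) it is the sequence $\underline{x}(\underline{a})$, so $\pi(\underline{a}) = x$.

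Finally, for item (3), suppose $\underline{a},\underline{b}\in\check S$ with $\pi(\underline{a}) = \pi(\underline{b})$. Using the equivariance of item (1), $\pi(\sigma^n(\underline{a})) = \pi(\sigma^n(\underline{b}))$ for all $n\ge 0$, and these are points $x_n(\underline{a}) = x_n(\underline{b})$ in $J_{a_n}\cap J_{b_n}$. Since the sets $\{J\}$ in $S$ are pairwise disjoint, a point lies in at most one of them, forcing $a_n = b_n$ for all $n\ge 0$. (Note this does not give $a_n = b_n$ for negative $n$ in general, reflecting the non-invertibility of $F$ when the images $f^{\tau(J)}(J)$ overlap — which is exactly why injectivity on $\check S$ in item (2) used clause (b) of (I2) rather than just disjointness.) The main obstacle throughout is the continuity argument in item (1); items (2) and (3) are then essentially bookkeeping with (I1), (I2), and the disjointness of $S$.
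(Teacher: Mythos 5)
The paper offers no written argument here (the proof is the one-line ``follows directly from the definitions''), so the only basis for comparison is correctness. Your treatment of items (1) and (3) is sound: shift-equivariance via the uniqueness clause of (I2), continuity via a diagonal/compactness argument using the extensions of $f^{\tau(J)}$ from (I1), and $a_n=b_n$ for $n\ge 0$ from forward equivariance plus disjointness of the elements of $S$ are all the right moves. (For continuity, note that the uniqueness in (I2) is most naturally read as uniqueness of the sequence satisfying clause (a) alone; with that reading your worry about clause (b) not being closed under limits disappears, since you only need to verify (a) for the limit sequence.)

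The genuine gap is in your proof of injectivity in item (2). From $\pi(\underline{a})=\pi(\underline{b})$, i.e.\ $x_0(\underline{a})=x_0(\underline{b})$, the equivariance $\pi\circ\sigma=F\circ\pi$ propagates equality \emph{forward}: it yields $x_n(\underline{a})=x_n(\underline{b})$ for $n\ge 0$, not for $n\le 0$. To recover the past from $x_0$ you would need $F$ to be invertible on $W$, which is exactly what Section~2 warns may fail when the images $f^{\tau(J)}(J)$ overlap -- a point you yourself make, correctly, in your parenthetical remark under item (3). So the step ``I would get $x_n(\underline{a})=x_n(\underline{b})$ for all $n\le 0$'' does not follow, and clause (b) of (I2) cannot be invoked. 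Indeed, literal injectivity of $\pi$ on the $\sigma$-invariant set $\check S$, combined with items (1) and the surjectivity $\pi(\check S)=W$, would conjugate $F|_W$ to the invertible map $\sigma|_{\check S}$, contradicting the possible non-invertibility of $F$; the very presence of item (3), which is strictly weaker than injectivity, signals that what is actually separated by the coding is the full past, i.e.\ the natural-extension map $\tilde\pi$ of \eqref{def:tildepi} is the object that clause (b) of (I2) makes injective. Your argument for $\pi(\check S)=W$ (threading a bi-infinite orbit through $x$ using (I1) and invoking uniqueness in (I2)) is fine; it is only the injectivity step that breaks.
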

\begin{proof}
The proof follows directly from the definitions of $\pi$ and $\check S$ and Conditions (I1) and (I2).
\end{proof}
Consider the natural extension $(\tilde{W}, \tilde{F})$ of $(W,F)$ where 
$$
\tilde{W}:=\{\tilde{x}=(x_n)_{n\le 0}\colon F(x_n)=x_{n+1}\}$$
and $(\tilde{F}(\tilde{x}))_n:=x_{n+1}$. Condition (I2) implies that the coding map
$\tilde{\pi}: S^\Z\to\tilde{W}$ defined by
\begin{equation}\label{def:tildepi}
\tilde{\pi}(\underline{a}):=(x_n(\underline{a}))_{n\le 0}
\end{equation}
is a bijection. Proving the existence and uniqueness of equilibrium measures requires some additional condition on the inducing scheme 
$\{S,\tau\}$:
\begin{enumerate}
\bigskip
\item[{\bf (I3)}] The set $S^\Z\setminus\check S$ supports no (ergodic) $\sigma$-invariant measure which gives positive weight to any open subset.
\bigskip
\item[{\bf (I4)}] The induced map $F$ has at least one periodic point in $W$.
\end{enumerate}
\bigskip
Condition (I3) is designed to ensure that every Gibbs measure (defined in \eqref{eq:gibbs2}) is supported on $\check{S}$ and its projection by $\pi$ is thus supported on $W$ and is $F$-invariant. This projection is a natural candidate for the equilibrium measure for $F$. Condition (I4) is used to prove the finiteness of the pressure function (see Theorem~\ref{boundedenergy}).

\section{Thermodynamics of 2-sided countable shifts}\label{sec:therm-2-sided}

Our goal in this section is to carry out the thermodynamic formalism for countable shifts on the space $S^\Z$ of two-sided infinite sequences. 
Given $n < m \in \Z$ and $b_n, \cdots, b_m \in S$, a cylinder set is defined as
\[
	\mathcal{C}_n[b_n, \cdots, b_m] = \{ \underline{a} \in S^\Z: \quad a_i = b_i , n \le i \le m\}. 
\]
Consider a function
$\Phi:\, S^{\mathbb Z}\to{\mathbb R}$. Define the
\emph{n-variation} of $\Phi$ by
\[
V_n(\Phi) := \sup_{b_{-n+1}, \cdots, b_{n-1}\in S}\sup_{\underline{a},\underline{a}'\in \mathcal{C}_{-n+1}[b_{-n+1}, \cdots, b_{n-1}]}\{|\Phi(\underline{a})-\Phi(\underline{a}')|\}.
\]	
$\Phi$ is said to have \emph{summable variations} if
$$
\sum_{n\ge 1}V_n(\Phi)<\infty
$$
$\Phi$ has \emph{strongly summable variations} if
\begin{equation}\label{eq:sumnvar}
\sum_{n\ge 1}nV_n(\Phi)<\infty
\end{equation}
and $\Phi$ is \emph{locally H\"older continuous} if there exist $C>0$ and $0<r<1$ such that for all $n\geq 1$,
$$
V_n(\Phi)\leq C r^n.
$$
The \emph{Gurevich pressure} of $\Phi$ is defined by
$$
P_G(\Phi):=\lim_{n\rightarrow \infty}\frac{1}{n}\log\sum_{\sigma^n(\underline{a})=\underline{a}}\exp\left(\Phi_n(\underline{a})\right)\mathbb{I}_{[b]}(\underline{a})
$$
for some $b\in S$, where $\mathbb{I}_{[b]}$ denotes the characteristic function of the cylinder $[b]$ and
\[
\Phi_n(\underline{a}):=\sum_{k=0}^{n-1}\Phi(\sigma^k(\underline{a})).
\]
Since the Gurevich pressure only depends on the positive side of the sequences, $P_G(\Phi)$ exists whenever $\sum_{n\ge 1}V_n(\Phi)<\infty$ and it is independant of $b\in S$ by \cite[Theorem 1]{Sar99}.

Let $\mathcal{M}(\sigma)$ denote the set of $\sigma$ invariant Borel probabilities on $S^\Z$ and 
\begin{eqnarray*} {\mathcal M}_{\Phi}(\sigma) := \{\ \nu\in\mathcal{M}(\sigma) \colon\int_{S^{\mathbb Z}} \Phi d\nu > -\infty\ \}.
\end{eqnarray*}
A $\sigma$-invariant measure $\nu=\nu_{\Phi}$ is an \emph{equilibrium measure} for $\Phi$ provided
$$
 h_{\nu_{\Phi}}(\sigma) + \int\Phi d\nu_{\Phi} = \sup_{\nu\in{\mathcal M}_{\Phi}(\sigma)}\{\ h_{\nu}(\sigma) + \int \Phi d\nu\ \}.
$$
A measure $\nu=\nu_{\Phi}$ is a
\emph{Gibbs measure} for $\Phi$ provided there exists a constant $C_0>0$ such that for any (positive) cylinder set $C_0[b_{0}, \cdots, b_{n-1}]$ and any
$\underline{a}\in C_0[b_{0}, \cdots, b_{n-1}]$ we have
\begin{equation}\label{eq:gibbs2}
C_0^{-1}\le \frac{\nu(\mathcal{C}_0[b_{0}, \cdots, b_{n-1}])}{\exp(-nP_G(\Phi)+\Phi_n(\underline{a}))} \le C_0.
\end{equation}
We now present our result describing thermodynamics of the full shift of countable type on the space of two-sided sequences. 
\begin{thm}\label{gibbs2} Assume $\sup_{\underline{a}\in S^\Z}\Phi(\underline{a}) <\infty$ and 
$\Phi$ has strongly summable variations. Then
\begin{enumerate}
\item the variational principle for $\Phi$ holds, i.e.,
\[
P_G(\Phi)=\sup_{\nu\in{\mathcal M}_{\Phi}(\sigma)}\{\ h_{\nu}(\sigma) + \int \Phi d\nu\ \}.
\]
\item If $P_G(\Phi)<\infty$, then there exists a unique $\sigma$-invariant ergodic Gibbs measure $\nu_{\Phi}$ for $\Phi$.
\item If, furthermore, $h_{\nu_{\Phi}}(\sigma)<\infty$, then
$\nu_{\Phi}\in {\mathcal M}_{\Phi}(\sigma)$ and it is the unique equilibrium measure for $\Phi$.
\end{enumerate}
\end{thm}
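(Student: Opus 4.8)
The plan is the classical two-step strategy: first reduce $\Phi$ on the two-sided shift $S^\Z$ to a cohomologous potential depending only on the nonnegative coordinates (the Sinai--Bowen trick, see \cite{Bow75}), then apply Sarig's thermodynamic formalism for one-sided countable Markov shifts \cite{Sar99} and transfer the conclusions back through the correspondence between $\sigma$-invariant measures of the two- and one-sided shifts. For the reduction, fix for each $b\in S$ a reference sequence $\underline a^{(b)}\in S^\Z$ with $a^{(b)}_0=b$, let $r\colon S^\Z\to S^\Z$ replace in $\underline a$ all coordinates of negative index by those of $\underline a^{(a_0)}$, and set $\psi(\underline a):=\sum_{k\ge0}\bigl(\Phi(\sigma^k r\underline a)-\Phi(r\sigma^k\underline a)\bigr)$. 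Since $\sigma^k r\underline a$ and $r\sigma^k\underline a$ agree on the coordinates $-k,\dots,k$, the $k$-th term is at most $V_{k+1}(\Phi)$, so $\psi$ is well defined and bounded with $\|\psi\|_\infty\le\sum_{n\ge1}V_n(\Phi)$. Then $\Phi^+:=\Phi+\psi-\psi\circ\sigma$ depends only on the nonnegative coordinates and descends to a function on the one-sided full shift over $S$; one has $\sup\Phi^+<\infty$, the standard estimate $V_n(\Phi^+)\le\sum_{k\ge n}V_k(\Phi)$ gives $\sum_n V_n(\Phi^+)\le\sum_k kV_k(\Phi)<\infty$ by \eqref{eq:sumnvar} (\emph{this is the one place where strong summability, and not mere summability, is used}), and since $\sigma^n\underline a=\underline a$ forces $\Phi^+_n(\underline a)=\Phi_n(\underline a)$ the defining partition functions coincide, so $P_G(\Phi)=P_G(\Phi^+)$.

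The one-sided full shift is topologically mixing and has big images and preimages, and $\Phi^+$ has summable variations with $\sup\Phi^+<\infty$, so Sarig's results \cite{Sar99} yield the variational principle for $\Phi^+$ and, when $P_G(\Phi^+)<\infty$, a unique $\sigma$-invariant Gibbs measure $\nu^+$ for $\Phi^+$, which is ergodic (indeed mixing). Because $(S^\Z,\sigma)$ is the natural extension of the one-sided shift, projection onto the nonnegative coordinates is a bijection between the $\sigma$-invariant Borel probabilities of the two spaces preserving entropy; and for $\nu$ on $S^\Z$ with projection $\mu$ one computes $\int\Phi\,d\nu=\int\Phi^+\,d\nu-\int(\psi-\psi\circ\sigma)\,d\nu=\int\Phi^+\,d\mu$ (the middle integral vanishes by $\sigma$-invariance as $\psi$ is bounded, and $\Phi^+$ factors through the projection), so $\nu\in\mathcal M_\Phi(\sigma)\iff\mu\in\mathcal M_{\Phi^+}(\sigma)$ and $h_\nu(\sigma)+\int\Phi\,d\nu=h_\mu(\sigma)+\int\Phi^+\,d\mu$. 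Combined with $P_G(\Phi)=P_G(\Phi^+)$ this gives (1). For (2), take $\nu_\Phi$ to be the lift of $\nu^+$; it is ergodic since the natural extension of an ergodic system is ergodic, and for a positive cylinder $\mathcal C_0[b_0,\dots,b_{n-1}]$ and $\underline a$ in it one has $\nu_\Phi(\mathcal C_0[b_0,\dots,b_{n-1}])=\nu^+(\mathcal C_0[b_0,\dots,b_{n-1}])$ together with $|\Phi_n(\underline a)-\Phi^+_n(\underline a)|=|\psi(\underline a)-\psi(\sigma^n\underline a)|\le2\|\psi\|_\infty$, so the Gibbs estimate \eqref{eq:gibbs2} for $\nu^+$ (with respect to $\Phi^+$) turns into \eqref{eq:gibbs2} for $\nu_\Phi$ (with respect to $\Phi$, the constant multiplied by $e^{2\|\psi\|_\infty}$). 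Conversely, any $\sigma$-invariant Gibbs measure for $\Phi$ equals the lift of its projection, and the same estimate shows that projection is Gibbs for $\Phi^+$, hence equals $\nu^+$; thus $\nu_\Phi$ is the unique $\sigma$-invariant Gibbs measure for $\Phi$.

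For (3), suppose $h_{\nu_\Phi}(\sigma)<\infty$. Taking $n=1$ in \eqref{eq:gibbs2} gives $\nu_\Phi(\mathcal C_0[b])\ge C_0^{-1}e^{-P_G(\Phi)+\inf_{\mathcal C_0[b]}\Phi}$, while $P_G(\Phi)<\infty$ forces $\sum_{b\in S}e^{\sup_{\mathcal C_0[b]}\Phi}<\infty$; since $x\mapsto xe^x$ is bounded on $(-\infty,\sup\Phi]$ and $V_1(\Phi)<\infty$, these combine to yield $\int\Phi\,d\nu_\Phi>-\infty$, i.e.\ $\nu_\Phi\in\mathcal M_\Phi(\sigma)$. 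Equivalently $\nu^+\in\mathcal M_{\Phi^+}(\sigma)$ with $h_{\nu^+}(\sigma)=h_{\nu_\Phi}(\sigma)<\infty$, so $\nu^+$ is the unique equilibrium measure for $\Phi^+$ by \cite{Sar99}; transferring back through the entropy- and integral-preserving bijection of the previous paragraph, $\nu_\Phi$ is the unique equilibrium measure for $\Phi$ within $\mathcal M_\Phi(\sigma)$, which is (3).

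I expect the main obstacle to be the reduction step: carrying out the cohomological change of potential with explicit control of the variations of $\Phi^+$, so as to see precisely that the hypothesis of \emph{strongly} summable variations (rather than just summable variations) of $\Phi$ is exactly what guarantees that $\Phi^+$ satisfies the hypotheses of Sarig's theory. Everything afterwards is bookkeeping through the natural-extension correspondence; the remaining point to verify carefully is that the uniqueness of the equilibrium measure provided by \cite{Sar99} applies to all of $\mathcal M_{\Phi^+}(\sigma)$ once the entropy is finite, and not only within some smaller class of measures.
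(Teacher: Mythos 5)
Your strategy is exactly the paper's: a Sinai--Bowen change of variables to a cohomologous potential depending only on the nonnegative coordinates, then Sarig's one-sided theory, then transfer back through the natural-extension correspondence (this is Lemma~\ref{bowentrick} plus Lemma~\ref{sarig} in the text). However, your explicit transfer function is wrong as written, and the step fails there. You set $\psi(\underline a)=\sum_{k\ge0}\bigl(\Phi(\sigma^k r\underline a)-\Phi(r\sigma^k\underline a)\bigr)$ and claim the two arguments agree on coordinates $-k,\dots,k$. They do not: for $-k\le j<0$ one has $(\sigma^k r\underline a)_j=a_{j+k}$, whereas $(r\sigma^k\underline a)_j$ is the $j$-th coordinate of the reference sequence attached to $a_k$. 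The two sequences agree only on the nonnegative coordinates, so each term is bounded merely by $V_1(\Phi)$ and the series need not converge. The correct choice is $u(\underline a)=\sum_{k\ge0}\bigl(\Phi(\sigma^k\underline a)-\Phi(\sigma^k r\underline a)\bigr)$: here $\sigma^k\underline a$ and $\sigma^k r\underline a$ agree on all coordinates $\ge -k$, giving the bound $V_{k+1}(\Phi)$, and $\Psi=\Phi+u\circ\sigma-u$ depends only on the nonnegative coordinates. With that fix your variation estimate (the paper gets $V_n(\Psi)\le V_n(\Phi)+8\sum_{j\ge[(n-2)/2]}V_j(\Phi)$, which is why strong summability is needed) and the rest of the reduction go through.

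A second, smaller flaw: in part (3) you argue that $P_G(\Phi)<\infty$ together with the Gibbs property already forces $\int\Phi\,d\nu_\Phi>-\infty$, using boundedness of $x\mapsto xe^x$. Term-wise boundedness does not give summability of $\sum_{b}\bigl|\sup_{[b]}\Phi\bigr|e^{\sup_{[b]}\Phi}$ from $\sum_b e^{\sup_{[b]}\Phi}<\infty$ (take $e^{\sup_{[b]}\Phi}\sim (b\log^2 b)^{-1}$), and indeed the implication is false in general for countable alphabets: the integrability of $\Phi$ is exactly what the hypothesis $h_{\nu_\Phi}(\sigma)<\infty$ buys you, via the Gibbs identity $h_\nu+\int\Phi\,d\nu=P_G(\Phi)$. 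This is already part of the one-sided statement you are citing (Lemma~\ref{sarig}(3)), so you should simply invoke it rather than rederive it. The remaining bookkeeping (pressure preserved, Gibbs property preserved under bounded cohomology, entropy- and integral-preserving bijection with the one-sided shift) is correct and matches the paper.
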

The proof of the Theorem will require the two Lemmas stated below.

Consider a potential $\Phi^*: S^\inte\to\rl$ and define its $n$-variations $V_n(\Phi^*)$ and its Gurevich pressure $P_G(\Phi^*)$ as well as Gibbs and equilibrium measures as above but using one-sided sequences.
\begin{lem}\label{sarig}\cite{Sar03}
Assume $\sup_{\underline{a}\in S^\inte}\Phi^*(\underline{a}) <\infty$ and $\sum_{n\ge 2}V_n(\Phi^*)<\infty$. Then
\begin{enumerate}
\item the variational principle for $\Phi^*$ holds, i.e.,
\[
P_G(\Phi^*)=\sup_{\nu\in{\mathcal M}_{\Phi^*}(\sigma)}\{\ h_{\nu}(\sigma) + \int \Phi^* d\nu\ \}.
\]
\item If $P_G(\Phi^*)<\infty$ and $\sum_{n\ge 1}V_n(\Phi^*)<\infty$, then there exists a unique $\sigma$-invariant ergodic Gibbs measure
$\nu_{\Phi^*}$ for $\Phi^*$.
\item If, furthermore, $h_{\nu_{\Phi^*}}(\sigma)<\infty$, then
$\nu_{\Phi^*}\in {\mathcal M}_{\Phi^*}(\sigma)$ and is the unique equilibrium measure for $\Phi^*$.
\end{enumerate}
\end{lem}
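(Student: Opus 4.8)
Here is how I would prove the three items, which together constitute Sarig's thermodynamic formalism for a topologically mixing countable Markov shift; the point is that the full shift $(S^{\inte},\sigma)$ trivially has the ``big images and preimages'' property (every symbol is both a successor and a predecessor of every other), so the structural hypotheses of \cite{Sar99, Sar03} are automatic, and the plan is to reduce each item to the corresponding theorem there. Throughout I would work with the Ruelle--Perron--Frobenius operator $(L_{\Phi^*}g)(x)=\sum_{\sigma y=x}e^{\Phi^*(y)}g(y)$, which under $\sum_{n\ge 1}V_n(\Phi^*)<\infty$ acts well on bounded locally H\"older functions, and with the exhaustion of $S^{\inte}$ by the compact subshifts $S_N^{\inte}$ on the first $N$ symbols.

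For item (1), the inequality $P_G(\Phi^*)\le\sup_{\nu\in{\mathcal M}_{\Phi^*}(\sigma)}\{h_\nu(\sigma)+\int\Phi^*\,d\nu\}$ follows from the approximation property $P_G(\Phi^*)=\sup_N P_{top}(\Phi^*|S_N^{\inte})$ (\cite{Sar99}) together with the classical variational principle (\cite{Bow75}) on each compact subsystem $S_N^{\inte}$, all of whose invariant measures lie in ${\mathcal M}_{\Phi^*}(\sigma)$ since $\Phi^*$ is bounded there; the reverse inequality $h_\nu(\sigma)+\int\Phi^*\,d\nu\le P_G(\Phi^*)$ for every $\nu\in{\mathcal M}_{\Phi^*}(\sigma)$ is Sarig's entropy bound, obtained by comparing $\frac1n\bigl(H_\nu(\bigvee_{i=0}^{n-1}\sigma^{-i}\alpha)+\int\Phi^*_n\,d\nu\bigr)$ with the Gurevich partition function, using $\sup\Phi^*<\infty$ and summable variations to pass from suprema over length-$n$ cylinders to values at periodic points (\cite{Sar99}); here $\alpha$ is the partition into $1$-cylinders and $\Phi^*_n=\sum_{k=0}^{n-1}\Phi^*\circ\sigma^k$. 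For item (2), when $P_G(\Phi^*)<\infty$ and $\sum_{n\ge 1}V_n(\Phi^*)<\infty$ I would invoke Sarig's existence theorem for Gibbs measures on shifts with big images and preimages \cite{Sar03}: there are $\lambda=e^{P_G(\Phi^*)}$, a positive continuous $h$ bounded away from $0$ and $\infty$, and a Borel probability $m$ with $L_{\Phi^*}h=\lambda h$ and $L_{\Phi^*}^{*}m=\lambda m$; then $\nu_{\Phi^*}:=(hm)/\int h\,dm$ is $\sigma$-invariant, and iterating $m(\mathcal C_0[b_0,\dots,b_{n-1}])=\lambda^{-n}\int L_{\Phi^*}^{n}\mathbb{I}_{\mathcal C_0[b_0,\dots,b_{n-1}]}\,dm$ with the bounded distortion furnished by summable variations gives the one-sided analogue of the Gibbs bounds \eqref{eq:gibbs2} with a uniform constant $C_0$. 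The Gibbs property forces $\nu_{\Phi^*}$ to be exact, hence ergodic, and any $\sigma$-invariant Gibbs measure is boundedly equivalent to $\nu_{\Phi^*}$, so two ergodic ones coincide; this is uniqueness.

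For item (3), assume in addition $h_{\nu_{\Phi^*}}(\sigma)<\infty$. Because $\Phi^*$ has summable variations, $\nu_{\Phi^*}$ is a $g$-measure: the conditional probabilities of $\alpha$ given $\sigma^{-1}\mathcal B$ are, by the Gibbs property, uniformly comparable to the unconditional values $\nu_{\Phi^*}([\cdot])$, so $h_{\nu_{\Phi^*}}(\sigma)$ and $H_{\nu_{\Phi^*}}(\alpha)$ are finite together; hence $H_{\nu_{\Phi^*}}(\alpha)<\infty$, and the $n=1$ Gibbs bound $\Phi^*(\underline a)\ge P_G(\Phi^*)-\log C_0+\log\nu_{\Phi^*}([a_0])$ then yields $\int\Phi^*\,d\nu_{\Phi^*}\ge P_G(\Phi^*)-\log C_0-H_{\nu_{\Phi^*}}(\alpha)>-\infty$, i.e. $\nu_{\Phi^*}\in{\mathcal M}_{\Phi^*}(\sigma)$. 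Taking logarithms in the one-sided analogue of \eqref{eq:gibbs2}, dividing by $n$, and applying the Shannon--McMillan--Breiman theorem on the left and the Birkhoff ergodic theorem (now legitimate) on the right, one obtains $\nu_{\Phi^*}$-a.e.
\[
-h_{\nu_{\Phi^*}}(\sigma)=\lim_{n\to\infty}\frac1n\log\nu_{\Phi^*}(\mathcal C_0[a_0,\dots,a_{n-1}])=-P_G(\Phi^*)+\int\Phi^*\,d\nu_{\Phi^*},
\]
so $\nu_{\Phi^*}$ attains the supremum in (1) and is an equilibrium measure. For uniqueness among all $\nu\in{\mathcal M}_{\Phi^*}(\sigma)$, by ergodic decomposition one may take $\nu$ ergodic; if $\nu$ is an equilibrium measure the same computation forces $\frac1n\log\nu(\mathcal C_0[a_0,\dots,a_{n-1}])\to -P_G(\Phi^*)+\int\Phi^*\,d\nu=-h_\nu(\sigma)$, and comparing this rate with the Gibbs asymptotics of $\nu_{\Phi^*}$ (using that equilibrium measures on such shifts are themselves Gibbs, \cite{Sar03}) gives $\nu\ll\nu_{\Phi^*}$, whence $\nu=\nu_{\Phi^*}$.

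I expect the two genuinely delicate points to be (i) the non-compactness in item (1), i.e. the upper bound $h_\nu(\sigma)+\int\Phi^*\,d\nu\le P_G(\Phi^*)$, which cannot be read off a single compact subsystem and needs Sarig's periodic-orbit argument; and (ii) the uniqueness clause of item (3), that every $\Phi^*$-integrable equilibrium measure coincides with the Gibbs measure. Both are carried out in \cite{Sar99, Sar03}; the role of this lemma is only to repackage those results in the precise form used in Section~\ref{sec:therm-2-sided}.
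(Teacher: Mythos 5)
Your proposal is correct and matches the paper's treatment: the paper offers no proof of this lemma, citing it directly as Sarig's thermodynamic formalism for one-sided countable shifts (with the BIP/full-shift structure making his hypotheses automatic), which is exactly the reduction you carry out. Your additional sketch of the internal arguments (RPF eigendata for the Gibbs bounds, Shannon--McMillan--Breiman for the equilibrium property) is consistent with \cite{Sar99, Sar03}, and you rightly flag the two delicate points (the upper bound in the variational principle and uniqueness in item (3)) as the parts that genuinely live in Sarig's papers rather than in this lemma.
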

Extending results for the full shift of countable type on the space of one-sided sequences to the full shift of countable type on the space of two--sided sequences can be achieved by a method inspired from \cite{Bow75} and \cite{Sin72}. 
\begin{lem}\label{bowentrick}
Assume $\Phi: S^\Z\to\R$ has strongly summable variations. Then there exists a bounded real valued function $u:S^\Z\to\R$ such that the function defined by
\[
\Psi(\underline{a}):=\Phi(\underline{a})+u(\sigma(\underline{a}))-u(\underline{a})
\]
satisfies: 
\begin{enumerate}
\item $\Psi(\underline{a})=\Psi(\underline{a}')$ whenever $a_i=a'_i$ for all $i\ge 0$;
\item $\Psi$ has summable variations, i.e., 
$\sum_{n\ge 1} V_n(\Psi)<\infty$;
\item if $\Phi$ is locally H\"older continuous then so is $\Psi$;
\item $P_G(\Psi)=P_G(\Phi)$.
\end{enumerate}
\end{lem}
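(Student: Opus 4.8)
The plan is to follow the classical Bowen–Sinai reduction from a two-sided potential to a one-sided one, keeping careful track of the quantitative hypothesis (strong summability) so that the cohomology function $u$ is genuinely bounded. First I would choose, for each $b\in S$, a distinguished reference sequence: pick a function $r:S\to S^{\mathbb Z}$ with $r(b)\in\mathcal C_0[b]$ and, crucially, such that $r$ respects pasting, i.e.\ if $\underline a\in S^{\mathbb Z}$ then $r(a_0)$ agrees with $\underline a$ on all nonnegative coordinates would be too strong; instead use the standard device: fix a single bi-infinite sequence $\underline{\omega}$ and, given $\underline a$, define $[\underline a]^+$ to be the sequence which equals $\underline a$ on coordinates $\ge 0$ and equals a fixed choice (depending only on $a_0$) on coordinates $<0$. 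Then set
\[
u(\underline a):=\sum_{k\ge 0}\bigl(\Phi(\sigma^k\underline a)-\Phi(\sigma^k[\underline a]^+)\bigr).
\]
The first step is to show this series converges absolutely and defines a bounded function: the $k$-th term is bounded by $V_{k+1}(\Phi)$ since $\sigma^k\underline a$ and $\sigma^k[\underline a]^+$ agree on coordinates $\ge -k$, so $|u|\le\sum_{n\ge 1}V_n(\Phi)<\infty$, which is finite even under mere summable variations; strong summability is not yet needed here.

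Next I would verify properties (1)–(3). For (1), a direct computation of $\Psi=\Phi+u\circ\sigma-u$ telescopes: the one-sided dependence follows because $u(\sigma\underline a)-u(\underline a)$ rewrites the tail sum so that $\Psi(\underline a)=\Phi([\underline a]^+)+\bigl(\text{correction depending only on }(a_i)_{i\ge 0}\bigr)$; the standard manipulation shows $\Psi(\underline a)$ depends only on the forward coordinates. For (2), one estimates $V_n(\Psi)$: writing $\Psi(\underline a)-\Psi(\underline a')$ for $\underline a,\underline a'$ agreeing on $|i|\le n-1$ in terms of the difference of two tail sums, the terms split into a ``head'' contribution bounded by a sum of $V_j(\Phi)$ with $j\le n$ (summable) and a ``tail'' contribution $\sum_{k\ge n}\bigl(\text{terms}\lesssim V_{k+1}(\Phi)\bigr)$; a Fubini/rearrangement argument shows $\sum_n V_n(\Psi)\lesssim\sum_n n\,V_n(\Phi)<\infty$ — this is exactly where strong summability \eqref{eq:sumnvar} is used. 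Property (3) is immediate from the same estimates: if $V_n(\Phi)\le Cr^n$ then the geometric tails reproduce a bound $V_n(\Psi)\le C'\rho^n$ for some $\rho\in(r,1)$.

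Finally, for (4), $P_G(\Psi)=P_G(\Phi)$: since the Gurevich pressure is computed over periodic orbits, $\sigma^n\underline a=\underline a$, and for a periodic point the Birkhoff sums satisfy $\Psi_n(\underline a)=\Phi_n(\underline a)+u(\sigma^n\underline a)-u(\underline a)=\Phi_n(\underline a)$ exactly (the coboundary vanishes on periodic orbits), so the defining limits coincide term by term and hence $P_G(\Psi)=P_G(\Phi)$. I expect the main obstacle to be the bookkeeping in step (2): one must organize the double sum defining $u(\sigma\underline a)-u(\underline a')\circ\sigma$ etc.\ so that the coefficient of each $V_j(\Phi)$ is counted with the correct multiplicity $O(j)$, and verify that the constant in $V_n(\Psi)\le\text{const}$ is genuinely uniform in $n$; the convergence and cohomology identities themselves are routine telescoping once the reference-sequence convention is pinned down consistently (in particular making sure $[\cdot]^+$ depends on $\underline a$ only through $(a_i)_{i\ge 0}$, so that $\Psi$ really is a function of the one-sided sequence).
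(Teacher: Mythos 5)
Your construction is exactly the paper's: the same coboundary $u(\underline a)=\sum_{j\ge0}\bigl(\Phi(\sigma^j\underline a)-\Phi(\sigma^j r(\underline a))\bigr)$ with a fixed reference past, the bound $|u|\le\sum_n V_n(\Phi)$ needing only summable variations, the same telescoping for (1), the same head/tail splitting for (2)--(3), and the same periodic-orbit cancellation of the coboundary for (4). One correction to your sketch of (2): the split must occur at a fixed fraction of $n$ (the paper uses $j=[(n-1)/2]$), not at $j=n$ as you wrote; splitting at $n$ leaves a head bounded only by the constant $\sum_{j}V_j(\Phi)$, which does not sum over $n$, whereas splitting at $n/2$ makes both head and tail of order $\sum_{j\gtrsim n/2}V_j(\Phi)$, so each $V_j$ is counted $O(j)$ times --- which is precisely the estimate $\sum_n V_n(\Psi)\lesssim\sum_n nV_n(\Phi)$ you correctly state as the goal.
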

\begin{proof}
Statements (1) and (2) can be proven under weaker assumptions (see \cite{CoeQua98} for finite shifts and \cite{Dao13} for countable shifts). 
We provide the proof nonetheless aiming at clarifying the proof of (3). 
Given a sequence $(r_k)_{k=-\infty}^0$ with  $r_k\in S$, define $r:S^\Z\to S^\Z$ by $(r(\underline{a}))_k=a_k$ for $k\ge 0$ and $(r(\underline{a}))_k=r_k$ for $k<0$ and set 
$$
u(\underline{a}):=\sum_{j=0}^\infty\  \Phi(\sigma^j(\underline{a}))-\Phi(\sigma^j(r(\underline{a}))).
$$
Since
$$
\left|\Phi(\sigma^j(\underline{a}))-\Phi(\sigma^j(r(\underline{a})))\right|\le V_{j+1}(\Phi),
$$
the function $u$ is well defined and uniformly bounded. For $\underline{a}, \underline{a}' \in \mathcal{C}_{-n+1}[b_{-n+1},\dots, b_{n-1}]$ one has
\begin{equation}\label{eq:u-summable}
\begin{aligned}
|u(\underline{a})-&u(\underline{a}')|\le \\
&\sum_{j=0}^{[\frac{n-1}2]}
|\Phi(\sigma^j(\underline{a}))-\Phi(\sigma^j(\underline{a}'))| 
+ |\Phi(\sigma^jr(\underline{a}))-\Phi(\sigma^jr(\underline{a}'))| \\
+&\sum_{j>[\frac{n-1}2]}^\infty
|\Phi(\sigma^j(\underline{a}))-\Phi(\sigma^jr(\underline{a}))|+|\Phi(\sigma^j(\underline{a}'))-\Phi(\sigma^jr(\underline{a}'))|\\
&\hspace{1cm}\le 4\sum_{j\ge [\frac{n-1}2]}V_j(\Phi).
\end{aligned}
\end{equation}
The strong summability of $\Phi$ yields
$$
\sum_{n\ge 1}V_n(u)<\infty.
$$
To prove the first statement, note that
$$
\Psi(\underline{a})=\sum_{j=0}^\infty \Phi(\sigma^j(r(\underline{a})))-\Phi(\sigma^jr(\sigma (\underline{a})))
$$
only depends on $a_i$ with $i\ge 0$. To prove the second statement, observe that for $\underline{a}, \underline{a}' \in \mathcal{C}_{-n+1}[b_{-n+1},\dots, b_{n-1}]$ the definition of $\Psi$ together with the arguments in \eqref{eq:u-summable} yield
$$
\left| \Psi(\underline{a})-\Psi(\underline{a}') \right| \le V_n(\Phi) + 8\sum_{j\ge [\frac{n-2}2]} V_j(\Phi)
$$
and thus
$$
\sum_{n\ge 1}V_n(\Psi)<\infty.
$$
This also proves the third statement of the Lemma, since if $V_n(\Phi)<C r^n$, then $V_n(\Psi)< C'r'^n$ for some $0<r'<1$ and $C'>0$.

The last statement follows from the definition of the Gurevich pressure.
\end{proof}

\begin{proof}[Proof of the Theorem]
Consider a function $\Phi$ satisfying the hypothesis of Theorem~\ref{gibbs2}, and let $\Psi^*: S^{\mathbb N}\rightarrow {\mathbb R}$ be defined by $\Psi^*(\underline{a}):=\Psi((\pi^*)^{-1}(\underline{a}))$, where $\pi^*:S^{\mathbb Z} \rightarrow S^{\mathbb N}$ is the canonical projection, and $\Psi=\Phi+u\circ\sigma-u$ is defined in Lemma~\ref{bowentrick}. The function $\Psi^*$ is well defined since $\Psi$ only depends on the positive elements of the sequence $(\pi^*)^{-1}(\underline{a})$.

In order to check that $\Psi^*$ satisfies the hypothesis of Lemma~\ref{sarig}, observe that
\begin{itemize}
\item[-] the boundedness of $u$ and $\sup_{\underline{a}\in S^\Z}\Psi(\underline{a})=\sup_{a\in S^\N}\Psi^*(\underline{a})$ imply that $\sup\Psi^*<\infty$ if and only if $\sup\Phi<\infty$. 
\item[-]
$\Psi^*$ has summable variations (over $S^\N$) since $\Psi$ has summable variations (over $S^\Z$) by Lemma~\ref{bowentrick}.
\item[-]
The Gurevich pressure $P_G(\Psi^*)$ (over $S^\N$) exists and $P_G(\Psi^*)=P_G(\Psi)=P_G(\Phi)$ since by definition the Gurevich pressure (over $S^\Z$) only depends on the positive side of the sequences.
\end{itemize}
Hence, if $\Phi$ satisfies the hypothesis of Theorem~\ref{gibbs2}, then 
$\Psi^*$ satisfies the hypothesis of Lemma~\ref{sarig}.

Since $\Phi$ and $\Psi$ are cohomologous, $\int\Phi \,d\nu=\int \Psi \,d\nu$ for any  $\nu\in\mathcal{M}(\sigma)$ and, in particular, 
${\mathcal M}_{\Phi}(\sigma)={\mathcal M}_{\Psi}(\sigma)$. Also, if 
$\nu^*:=(\pi^*)_*\nu$ then 
$\int_{S^{\mathbb Z}}\Psi \,d\nu=\int_{S^{\mathbb N}}\Psi^* \,d\nu^*$ and $h_\nu(\sigma)=h_{\nu^*}(\sigma)$. Moreover, for any given invariant Borel probability measure $\nu^*$ on $S^{\mathbb N}$ there exists a unique invariant Borel probability measure $\nu$ on $S^{\mathbb Z}$ with $\nu^*=(\pi^*)_*\nu$.

The variational principle then holds since $P_G(\Phi)=P_G(\Psi^*)$ and
$$ 
\sup_{\nu\in{\mathcal M}_{\Phi}(\sigma)}\{ h_{\nu}(\sigma)+\int \Phi\,d\nu\}
=\sup_{\nu^*\in{\mathcal M}_{\Psi^*}(\sigma)}\{ h_{\nu^*}(\sigma)+ 
\int\Psi^* \,d\nu^*\}.
$$
Let $\nu^*_{\Psi^*}$ be the unique Gibbs measure for $\Psi^*$ and denote its natural extension to the two-sided shift by $\nu_\Phi$. The one-to-one entropy preserving correspondence between invariant measures and their natural extensions implies that $\nu_\Phi$ is the unique Gibbs and equilibrium measure for $\Psi$ as well as for the cohomologous function $\Phi$.
\end{proof}

We now describe some ergodic properties of equilibrium measures. 
Consider a continuous transformation $T$. Recall that a $T$-invariant Borel probability measure $\mu$ has \emph{exponential decay of correlations} with respect to a class $\mathcal{H}$ of functions if there exists $0<\theta<1$ such that, for any $h_1, h_2\in\mathcal{H}$,
$$
\Big |\int h_1(T^n(x))h_2(x)\,d\mu -\int h_1(x)\,d\mu
\int h_2(x)\,d\mu\Big |\le K \theta^{n},
$$
for some $K=K(h_1,h_2)>0$.
The measure $\mu$ satisfies the {\it central limit theorem} (CLT)  with respect to the class $\mathcal{H}$ if 
there exists $\sigma\in\R$ such that 
$\displaystyle{\frac{1}{\sqrt{n}}\sum_{i=0}^{n-1}\left(h(T^ix)-\int
h\,d\mu\right)}$ converges in law to a normal distribution $\mathcal{N}(0,\sigma)$. Given a measure $\mu$, a function $h$ is \emph{cohomologous to a function $h'$} if there exists $g\in L^2(\mu)$ with $h-h'=g\circ f- g$, $\mu$-almost surely.

The following theorem describing some ergodic properties of the equilibrium measure $\nu_\Phi$ is a corollary of the well-known results by Ruelle \cite{Rue78} (see also \cite{Gor69}, \cite{Liv96} and \cite{Aar97}). 

\begin{thm}\label{sarig1}
Assume that the conditions of Theorem \ref{gibbs2} hold and that $\Phi$ is locally H\"older continuous. Then the unique equilibrium measure 
$\nu_\Phi$ has exponential decay of correlations, and satisfies the CLT
with respect to the class of bounded locally H\"older continuous functions with $\sigma>0$ iff $h$ is not cohomologous to a constant.
\end{thm}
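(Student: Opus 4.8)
The plan is to transport the statement to the one-sided countable Markov shift $(S^{\N},\sigma)$, where Ruelle's Perron--Frobenius theory and its probabilistic corollaries are available, and then to push the conclusions back to $(S^{\Z},\sigma,\nu_\Phi)$ through the natural-extension correspondence established in the proof of Theorem~\ref{gibbs2}.

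First I would record the one-sided facts. By Lemma~\ref{bowentrick} the potential $\Psi=\Phi+u\circ\sigma-u$ is cohomologous to $\Phi$ via a bounded transfer function, depends only on the nonnegative coordinates, has summable variations, and — since $\Phi$ is locally H\"older — is itself locally H\"older; hence its projection $\Psi^{*}$ to $S^{\N}$ is a bounded-above, locally H\"older potential with $P_G(\Psi^{*})=P_G(\Phi)<\infty$, and by Lemma~\ref{sarig} it admits a unique ergodic Gibbs measure $\nu_{\Psi^{*}}^{*}$ whose natural extension is $\nu_\Phi$. For such a potential the Ruelle operator has a spectral gap on a space of locally H\"older functions, so the cited results of Ruelle \cite{Rue78} (see also \cite{Aar97}) give: $\nu_{\Psi^{*}}^{*}$ has exponential decay of correlations against bounded locally H\"older observables on $S^{\N}$ with some rate $\theta_0\in(0,1)$, it satisfies the CLT for every such observable $\psi$, and the asymptotic variance $\sigma^{2}(\psi)$ vanishes precisely when $\psi$ equals a constant plus an $L^{2}(\nu_{\Psi^{*}}^{*})$-coboundary.

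Next I would transfer these to $S^{\Z}$. For the CLT, given a bounded locally H\"older $h$ on $S^{\Z}$, I would apply the construction of Lemma~\ref{bowentrick} to $h$ in place of $\Phi$ to write $h=\bar h+v\circ\sigma-v$ with $v$ bounded and $\bar h$ bounded, locally H\"older, and depending only on the nonnegative coordinates. Then $\sum_{i=0}^{n-1}(h\circ\sigma^{i})$ and $\sum_{i=0}^{n-1}(\bar h\circ\sigma^{i})$ differ by the bounded function $v\circ\sigma^{n}-v$, negligible after division by $\sqrt n$; since $\bar h=\bar h^{*}\circ\pi^{*}$ for a bounded locally H\"older $\bar h^{*}$ on $S^{\N}$ and $(\pi^{*})_{*}\nu_\Phi=\nu_{\Psi^{*}}^{*}$, the normalized Birkhoff sums of $\bar h$ under $\nu_\Phi$ have the same law as those of $\bar h^{*}$ under $\nu_{\Psi^{*}}^{*}$, so the CLT and the value of $\sigma$ are inherited from the one-sided case. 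The dichotomy follows from the one-sided one together with $\sigma^{2}(h)=\sigma^{2}(\bar h^{*})$: if $h$ is cohomologous to a constant then so is $\bar h$, hence $\sigma^{2}(h)=0$ and $\sigma=0$; conversely $\sigma=0$ gives $\sigma^{2}(\bar h^{*})=0$, so $\bar h^{*}$ is a constant plus an $L^{2}$-coboundary on $S^{\N}$, whose pullback by $\pi^{*}$ (still in $L^{2}(\nu_\Phi)$) shows $\bar h$, and thus $h$, cohomologous to a constant. For decay of correlations, given bounded locally H\"older $h_{1},h_{2}$ I would approximate each in the sup norm by $h_{j}^{(k)}$ obtained by freezing all coordinates of index $<-k$ at fixed reference values, with $\|h_{j}-h_{j}^{(k)}\|_{\infty}\le V_{k+1}(h_{j})\le Cr^{k+1}$; after translation by $\sigma^{k}$ both $h_{1}^{(k)}\circ\sigma^{n}$ and $h_{2}^{(k)}$ become functions of the nonnegative coordinates, i.e.\ locally H\"older functions on $S^{\N}$ whose H\"older norms are bounded by $O(r^{-k})$ times those of $h_{1},h_{2}$. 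Applying the one-sided exponential decay of correlations to these gives a covariance bound of order $r^{-2k}\theta_0^{\,n}$; choosing $k$ proportional to $n$ so as to balance this against the two approximation errors of order $r^{k}$ yields exponential decay of correlations for $h_{1},h_{2}$ on $S^{\Z}$ at some rate $\theta\in(0,1)$.

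The essential analytic input — the spectral gap of the Ruelle operator for $\Psi^{*}$ and the consequent decay and CLT on the one-sided shift — is already contained in the cited literature, so the genuine work lies in the second step; the point requiring the most care is the bookkeeping there, namely checking that the coboundary reduction (for the CLT) and the sup-norm truncations (for decay of correlations) produce observables on $S^{\N}$ whose locally H\"older norms, transfer functions, and variances are controlled uniformly in the auxiliary parameters, so that the uniform constants needed to conclude exponential decay and to identify the unchanged CLT variance are legitimately available.
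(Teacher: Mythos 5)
Your argument is correct and is exactly the route the paper has in mind: the paper offers no written proof, merely declaring the theorem a corollary of Ruelle's one-sided results after the reduction $\Phi\mapsto\Psi\mapsto\Psi^*$ of Lemma~\ref{bowentrick}, and your proposal supplies the standard details of that reduction (bounded-coboundary transfer for the CLT and variance dichotomy, sup-norm truncation plus a shift to push two-sided observables to $S^{\N}$ for the exponential decay). The bookkeeping you flag — the $r^{-2k}$ inflation of the H\"older norms balanced against the $r^{k}$ approximation error by taking $k$ proportional to $n$ — is handled correctly.
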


\section{Thermodynamics of Inducing Schemes of Hyperbolic Type}\label{sec:therm-induced}

Assume the continuous map $f:X\to X$ of the compact metric space 
$(X, d)$ with $h_{top}(f)<\infty$ admits an inducing scheme $\{S,\tau\}$ 
of hyperbolic type. Set
\begin{equation}\label{setY}
Y:= \{f^k(x)\colon x\in W, \ 0\le k\le \tau(x)-1\}.
\end{equation}
Note that $Y$ is forward invariant under $f$. Denote the set of 
$f$-invariant ergodic Borel probability measures on $Y$ by 
$\mathcal{M}(f,Y)$ and the set of $F$-invariant ergodic Borel probability measures on $W$ by $\mathcal{M}(F,W)$. For any 
$\nu\in\mathcal{M}(F, W)$ let
$$
Q_\nu:=\int_W\tau\, d\nu.
$$
If $Q_\nu<\infty$ the \emph{lifted measure} ${\mathcal L}(\nu)$ is defined as follows: for any $E\subset Y$,
$$
{\mathcal L}(\nu)(E):= \frac{1}{Q_\nu}\sum_{J\in S}\sum_{k=0}^{\tau(J)-1}\nu(f^{-k}(E)\cap J).
$$
\begin{prop}
If $\nu\in\mathcal{M}(F,W)$ and $Q_\nu<\infty$ then
${\mathcal L}(\nu)\in\mathcal{M}(f,Y)$, ${\mathcal L}(\nu)(W)=\frac{1}{Q_\nu}$.
\end{prop}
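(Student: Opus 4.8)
The plan is to verify the three assertions in turn: that $\mathcal{L}(\nu)$ is a probability measure, that it is $f$-invariant (and ergodic), and that $\mathcal{L}(\nu)(W)=1/Q_\nu$. First I would check that $\mathcal{L}(\nu)$ is a Borel probability: since the sets $J\in S$ are disjoint and the blocks $\{f^k(J)\colon 0\le k\le\tau(J)-1\}$ cover $Y$ by the definition \eqref{setY}, taking $E=Y$ gives $\mathcal{L}(\nu)(Y)=\frac{1}{Q_\nu}\sum_{J\in S}\sum_{k=0}^{\tau(J)-1}\nu(J)=\frac{1}{Q_\nu}\sum_{J\in S}\tau(J)\nu(J)=\frac{1}{Q_\nu}\int_W\tau\,d\nu=1$. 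Countable additivity is inherited from that of $\nu$ term by term, using that $Q_\nu<\infty$ to justify interchanging the (absolutely convergent) double sum with a countable union in $E$.

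Next, the claim $\mathcal{L}(\nu)(W)=1/Q_\nu$: taking $E=W$, only the terms with $f^{-k}(W)\cap J\ne\emptyset$ contribute, and I would argue that for $x\in J$ one has $f^k(x)\in W$ with $0\le k\le\tau(J)-1$ only for $k=0$ — this uses that $\tau$ is (at least on the symbolic model) the inducing time and the structure of the tower, so that points strictly inside a column are not in the base; more carefully, $f^{-k}(W)\cap J$ for $k\ge 1$ is contained in the set of points whose inducing time is $\le k$, and summing the $k=0$ term alone gives $\frac{1}{Q_\nu}\sum_{J\in S}\nu(J)=\frac{1}{Q_\nu}\nu(W)=\frac{1}{Q_\nu}$. (If overlaps of the images $f^{\tau(J)}(J)$ cause a point of $W$ to also lie in some $f^k(J')$ with $1\le k<\tau(J')$, one passes to the symbolic coding via $\pi$ where the tower levels are genuinely disjoint, and pushes the computation down; I would phrase the measure $\nu$ through its lift to $\check S$ using Proposition~\ref{conjugacy}.)

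For $f$-invariance I would compute $\mathcal{L}(\nu)(f^{-1}(E))$ and compare with $\mathcal{L}(\nu)(E)$. Writing $f^{-1}(E)$ in the double sum, the term for level $k$ of column $J$ becomes $\nu(f^{-k}(f^{-1}(E))\cap J)=\nu(f^{-(k+1)}(E)\cap J)$, so the levels $k=0,\dots,\tau(J)-2$ of $f^{-1}(E)$ reproduce the levels $k=1,\dots,\tau(J)-1$ of $E$, and the telescoping leaves only the top-level discrepancy: the level $k=\tau(J)-1$ of $f^{-1}(E)$ contributes $\nu(f^{-\tau(J)}(E)\cap J)=\nu(F^{-1}(E\cap W)\cap J)$ (using $F|J=f^{\tau(J)}|J$), while the level $k=0$ of $E$ contributes $\nu(E\cap J)$. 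Summing over $J$, the first collects to $\nu(F^{-1}(E\cap W))=\nu(E\cap W)$ by $F$-invariance of $\nu$, and the second to $\nu(E\cap W)$, so the two boundary terms cancel and $\mathcal{L}(\nu)(f^{-1}(E))=\mathcal{L}(\nu)(E)$. Here one must be slightly careful that $F^{-1}(E\cap W)\cap J\subset J$ and that $f^{\tau(J)}$ is defined on a neighborhood of $J$ (Condition (I1)), which legitimizes the identity $f^{-\tau(J)}(E)\cap J=F^{-1}(E\cap W)\cap J$.

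Finally, ergodicity of $\mathcal{L}(\nu)$ follows from ergodicity of $\nu$ by the standard Abramov-type argument: an $f$-invariant set $A\subset Y$ pulls back to the $F$-invariant set $A\cap W\subset W$, which has $\nu$-measure $0$ or $1$ by ergodicity of $\nu$; chasing this through the tower levels (every point of $Y$ eventually returns to $W$ under forward iteration since $\tau$ is finite-valued) forces $\mathcal{L}(\nu)(A)\in\{0,1\}$. I expect the main obstacle to be the bookkeeping in the non-invertible, overlapping case — ensuring that the tower levels $f^k(J)$ are handled as a disjoint union upstairs (in $\check S$ or $S^\Z$) rather than downstairs in $W$, where images may overlap — but this is exactly what Proposition~\ref{conjugacy} and Condition (I2) are set up to provide, so the argument reduces to the classical computation.
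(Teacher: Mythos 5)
The paper states this proposition without proof, so there is no argument of the authors to compare yours against; I am judging your write-up on its own. Most of it is correct: the total-mass computation (using $f^{-k}(Y)\cap J=J$ for $0\le k\le\tau(J)-1$, which follows from the definition \eqref{setY}), the telescoping computation for $f$-invariance — the two boundary terms $\sum_{J}\nu(f^{-\tau(J)}(E)\cap J)=\nu(F^{-1}(E\cap W))$ and $\sum_J\nu(E\cap J)=\nu(E\cap W)$ do cancel by $F$-invariance of $\nu$, and since only the preimage relation $\nu\circ F^{-1}=\nu$ is used, the possible non-invertibility of $F$ is harmless — and the ergodicity argument via the $F$-invariance of $A\cap W$ are all sound.

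The genuine gap is in your proof of $\mathcal{L}(\nu)(W)=1/Q_\nu$. You claim that for $x\in J$ and $1\le k\le\tau(J)-1$ one cannot have $f^k(x)\in W$, and that $f^{-k}(W)\cap J$ is contained in the set of points with inducing time $\le k$. Neither holds here: the inducing time is constant equal to $\tau(J)$ on $J$ and is \emph{not} assumed to be a first return time to $W$, so $f^i(J)\cap W\ne\emptyset$ for $0<i<\tau(J)$ is explicitly permitted — this is exactly the distinction between $S'$ and $S^*=S\setminus S'$ at the start of Section 5. Passing to the symbolic coding does not repair this: $\mathcal{L}(\nu)(W)$ is computed by the defining formula downstairs in $Y$, where the levels $f^k(J)$ can genuinely meet $W$; the levels are disjoint only on the abstract tower $\hat{Y}$, and $W\subset Y$ is in general the image of more than just the zeroth level. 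What your computation actually establishes is $\mathcal{L}(\nu)(W)\ge 1/Q_\nu$, with equality precisely when $\nu(\{x\in J\colon f^k(x)\in W\})=0$ for every $J\in S$ and every $0<k<\tau(J)$ — for instance when $\tau$ is the first return time, as it is in the paper's applications in Sections 8 and 9. To prove the equality as literally stated you must either invoke such a hypothesis or explain why the $k\ge 1$ terms vanish for the given $\nu$; as written, that step fails.
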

Consider the class of measures
\begin{equation}
\begin{aligned}\label{L-1}
\mathcal{M}_L(f,Y):=\{\mu\in\mathcal{M}(f,Y): \text{there is }&\nu:=\mathcal{L}^{-1}(\mu)\in\mathcal{M}(F,W)\\&
\text{ with }{\mathcal L}(\nu)=\mu\}.
\end{aligned}
\end{equation}
A measure $\mu\in\mathcal{M}_L(f,Y)$ is called \emph{liftable} and $\mathcal{L}^{-1}(\mu)$ is an \emph{induced measure} for $\mu$.
\begin{rmk}
The class of liftable measures depends on the choice of the inducing scheme $\{S,\tau\}$: a given measure in ${\mathcal M}(f,Y)$ may be liftable with respect to one inducing scheme but not with respect to another. One can show (see \cite{Zwe05}) that if $\tau\in L^1(Y, \mu)$, then $\mu\in\mathcal{M}_L(f,Y)$.
\end{rmk}
A measure $\mu_\varphi\in{\mathcal{M}_L(f,Y)}$ will be called an \emph{equilibrium measure for} $\varphi$ (in the space 
$\mathcal{M}_L(f,Y)$ of \emph{liftable} measures) if
$$
P_L(\varphi):=\sup_{\mathcal{M}_L(f,Y)}\{h_\mu(f)+\int_X \varphi \,d\mu\} = h_{\mu_\varphi}(f)+\int_X\varphi \,d\mu_\varphi.
$$
This definition of equilibrium measures differs from the standard one as it only considers liftable measures and hence on the inducing scheme.

Condition (I4) on the inducing scheme of hyperbolic type implies:
\begin{thm}[\cite{PesSen08}, Theorem 4.2]\label{boundedenergy}
Assume that the inducing scheme $\{S, \tau\}$ satisfies Condition (I4), 
the function $\Phi$ has strongly summable variations and 
$P_G(\Phi)<\infty$. Then $-\infty<P_L(\varphi)<\infty$.
\end{thm}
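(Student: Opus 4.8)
The plan is to reduce the statement to an estimate on the Gurevich pressure $P_G(\Phi)$ together with the obvious (already recorded) fact that equilibrium measures for $\varphi$ in the liftable class correspond, via $\mathcal L$ and $\tilde\pi$, to $\sigma$-invariant measures on $S^\Z$. Concretely, to a potential $\varphi$ on $X$ one associates the \emph{induced potential}
\[
\Phi(\underline a) := \sum_{k=0}^{\tau(J_{a_0})-1}\varphi\bigl(f^k(\pi(\underline a))\bigr) - Q\,\tau(J_{a_0}),
\]
where $Q$ is the (unknown) value $P_L(\varphi)$; more precisely one works with the one-parameter family $\Phi_Q(\underline a):=\sum_{k=0}^{\tau(J_{a_0})-1}\varphi(f^k(\pi(\underline a))) - Q\tau(J_{a_0})$ and uses the standard Abramov/Kac relation: for $\nu\in\mathcal M(F,W)$ with $Q_\nu<\infty$ and $\mu=\mathcal L(\nu)$,
\[
h_\mu(f)+\int_X\varphi\,d\mu = \frac{1}{Q_\nu}\Bigl(h_\nu(F)+\int_W\Bigl(\sum_{k=0}^{\tau-1}\varphi\circ f^k\Bigr)d\nu\Bigr),
\]
so that $h_\mu(f)+\int\varphi\,d\mu \ge Q$ iff $h_\nu(F)+\int_W(\sum_{k<\tau}\varphi\circ f^k - Q\tau)\,d\nu\ge 0$, i.e. iff $h_\nu(\sigma)+\int_{S^\Z}\Phi_Q\,d\nu\ge 0$ under $\tilde\pi$. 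Hence $P_L(\varphi)<\infty$ is equivalent to: for all $Q$ large enough, $P_G(\Phi_Q)\le 0$ (so no $F$-invariant, liftable measure beats $Q$), and $P_L(\varphi)>-\infty$ is equivalent to exhibiting \emph{one} liftable measure with finite free energy, together with the reverse bound $P_G(\Phi_Q)\ge 0$ for $Q$ small enough.

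For the \textbf{upper bound} $P_L(\varphi)<\infty$: by hypothesis $\Phi=\Phi_{Q_0}$ has strongly summable variations and $P_G(\Phi_{Q_0})<\infty$ for the particular normalization used in the statement; since $\tau\ge 1$, increasing $Q$ decreases $\Phi_Q$ pointwise by at least $(Q-Q_0)$ on each cylinder, so $P_G(\Phi_Q)\le P_G(\Phi_{Q_0})-(Q-Q_0)\to-\infty$. The variational principle of Theorem~\ref{gibbs2}(1) for $\Phi_Q$ (valid since $\sup\Phi_Q\le\sup\Phi_{Q_0}<\infty$ and strong summability is preserved) then gives $h_\nu(\sigma)+\int\Phi_Q\,d\nu\le P_G(\Phi_Q)<0$ for every $\nu\in\mathcal M_{\Phi_Q}(\sigma)$ once $Q>Q_0+P_G(\Phi_{Q_0})$. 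Translating back through the Abramov relation, no liftable measure has free energy $\ge Q$, so $P_L(\varphi)\le Q<\infty$. One must be slightly careful that measures $\nu$ with $\int\Phi_Q\,d\nu=-\infty$ (hence $Q_\nu=\infty$, not in the liftable class) are harmless — they simply don't contribute to the supremum defining $P_L$.

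For the \textbf{lower bound} $P_L(\varphi)>-\infty$: this is where Condition (I4) enters, exactly as in \cite{PesSen08}. Let $p\in W$ be a periodic point of $F$ guaranteed by (I4), say $F^m(p)=p$, lying in cylinders determined by a periodic sequence $\underline a^*\in S^\Z$ of period $m$. The orbit of $p$ under $F$ carries an $F$-invariant measure $\nu_p$ supported on finitely many points, with $Q_{\nu_p}=\frac1m\sum_{i=0}^{m-1}\tau(J_{a^*_i})<\infty$; since $\varphi$ is (at least) bounded on the compact set $X$, $\int\Phi_Q\,d\nu_p$ is finite, so $\nu_p\in\mathcal M_{\Phi_Q}(\sigma)$ and $\mu_p:=\mathcal L(\nu_p)\in\mathcal M_L(f,Y)$. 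Then $h_{\mu_p}(f)+\int\varphi\,d\mu_p\ge \frac{1}{Q_{\nu_p}}\int_W(\sum_{k<\tau}\varphi\circ f^k)\,d\nu_p\ge -\|\varphi\|_\infty>-\infty$, giving $P_L(\varphi)\ge-\|\varphi\|_\infty$. The main obstacle is bookkeeping rather than conceptual: one has to be scrupulous about which normalization of the induced potential $\Phi$ is meant in the theorem's hypothesis, verify that strong summability and $\sup\Phi<\infty$ transfer to the shifted family $\Phi_Q$, and invoke the Abramov/Kac formula in the generality needed (including the case $Q_\nu=\infty$). Once these are pinned down, the proof is the short pressure comparison plus the periodic-orbit measure from (I4), which is essentially the argument of \cite[Theorem 4.2]{PesSen08} adapted to two-sided shifts — and the two-sided adaptation costs nothing here since $P_G$ depends only on the one-sided part, as noted after the definition of the Gurevich pressure.
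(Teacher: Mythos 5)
Your argument is correct and is essentially the proof of the cited result: the paper gives no proof of Theorem~\ref{boundedenergy} beyond the reference to \cite{PesSen08}, Theorem 4.2, and that proof is exactly your two-step scheme --- the upper bound from the variational principle for $P_G$ combined with Abramov/Kac and $\tau\ge 1$, and the lower bound from the periodic-orbit measure supplied by (I4). Two cosmetic points: your appeal to $\|\varphi\|_\infty$ is unnecessary (and unavailable if $\varphi$ is unbounded, which the paper permits) since the free energy of the periodic-orbit measure is a finite average of finitely many real values of $\varphi$ anyway, and $\int\Phi_Q\,d\nu=-\infty$ does not force $Q_\nu=\infty$ --- but such measures have free energy $-\infty$ and are harmless regardless.
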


The existence of a unique equilibrium measure for $\varphi$ is proven by first studying the problem for the induced system $(F,W)$ and the \emph{induced potential}
$\overline{\varphi}: W\to\rl$ defined by
$$
\overline\varphi(x):= \sum_{k=0}^{\tau(x)-1}\varphi(f^k(x)).
$$
The crucial relation between the original and induced systems is given by Abramov's and Kac's formul{\ae} which extend verbatim to inducing schemes of expanding type
\begin{prop}\cite[Theorem 2.3]{PesSen08}
\label{abramovkac}
Let $\nu\in\mathcal{M}(F,W)$ and $Q_\nu<\infty$. Then
$$
h_\nu(F)=Q_\nu\cdot h_{{\mathcal L}(\nu)}(f).
$$
Let $\varphi$ be a potential and $\overline\varphi$ the corresponding induced potential. If
$\int_W\overline\varphi \,d\nu<\infty$ then
$$
-\infty < \int_W\overline\varphi\,d\nu =Q_\nu\cdot\int_X\varphi \,d{\mathcal L}(\nu) <\infty.
$$
\end{prop}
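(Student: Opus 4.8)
The statement to prove is Proposition~\ref{abramovkac} (Abramov's and Kac's formulae for inducing schemes of hyperbolic type), consisting of two parts: the entropy relation $h_\nu(F)=Q_\nu\cdot h_{\mathcal{L}(\nu)}(f)$ and the potential relation $\int_W\overline\varphi\,d\nu=Q_\nu\cdot\int_X\varphi\,d\mathcal{L}(\nu)$.

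\medskip

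\noindent\textbf{Proof proposal.} The plan is to reduce both formulae to their classical counterparts for first-return (induced) maps, since the hyperbolic case differs from the expanding case of \cite{PesSen08} only in that $F$ need not be invertible and the images $f^{\tau(J)}(J)$ may overlap — neither of which affects the measure-theoretic bookkeeping once we work with $\nu$ and $\mathcal{L}(\nu)$ directly.

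First I would verify the Kac-type formula for the potential, which is purely a matter of unraveling definitions and Fubini. Writing $\mu:=\mathcal{L}(\nu)$, for any bounded measurable $g$ on $Y$ one has $\int_Y g\,d\mu=\frac{1}{Q_\nu}\sum_{J\in S}\sum_{k=0}^{\tau(J)-1}\int_J g\circ f^k\,d\nu$ by the definition of $\mathcal{L}(\nu)$; applying this with $g=\varphi$ and summing the inner sum gives exactly $\int_W\overline\varphi\,d\nu=Q_\nu\int_Y\varphi\,d\mu$. The only subtlety is integrability: one must check that $\int_W\overline\varphi\,d\nu<\infty$ (which is the hypothesis) forces the right-hand side to be finite and, for the lower bound $-\infty<\int_W\overline\varphi\,d\nu$, one invokes $\sup\varphi<\infty$ together with $Q_\nu<\infty$ (equivalently $\tau\in L^1(\nu)$) to control the negative part; here one should be a little careful that the manipulation is legitimate when $\varphi$ is only bounded above, by splitting into positive and negative parts and applying Tonelli to each. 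This is where I expect the only real care is needed, though it is routine.

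For the entropy formula I would appeal to the Abramov formula in the form valid for general (not necessarily invertible, not necessarily first-return) induced transformations: if $\nu$ is $F$-invariant with $F|J=f^{\tau(J)}|J$ and $\mathcal{L}(\nu)$ is the associated $f$-invariant measure on $Y$, then $h_\nu(F)=Q_\nu\cdot h_{\mathcal{L}(\nu)}(f)$. Since Proposition~\ref{abramovkac} is cited verbatim from \cite[Theorem 2.3]{PesSen08} and the excerpt explicitly states these formulae ``extend verbatim to inducing schemes of expanding type'' (and, as we are asserting, to hyperbolic type), the cleanest route is: (i) pass to the natural extension $(\tilde W,\tilde F)$ and its coding by $S^\Z$ via $\tilde\pi$, under which $\nu$ lifts to a shift-invariant measure with the same entropy and $\tau$ becomes a genuine measurable roof function; (ii) the tower $Y$ over $(W,F)$ with roof $\tau$ is then, on the natural-extension level, the standard suspension/tower construction, for which Abramov's formula $h_{\text{tower}}=h_{\text{base}}/Q_\nu$ is classical (Abramov's theorem); (iii) since taking natural extensions preserves entropy for both $f|Y$ and $F|W$, the formula descends. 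The main obstacle — and the reason the hyperbolic case is not literally the expanding case — is step (i)/(ii): one must make sure that the tower over $W$ with roof $\tau$ really does model $(Y,f,\mathcal{L}(\nu))$ measure-theoretically even when $f^{\tau(J)}(J)$ overlap, i.e., that the map from the tower to $Y$ sending $(x,k)\mapsto f^k(x)$ is a measure isomorphism for the relevant measures; this follows because the overlaps occur on the images of the sets $\overline{J}\setminus J$, which by Condition (I3) and Proposition~\ref{conjugacy}(2) carry no mass for $\nu$ or any measure in its forward iterates, so the potential ambiguity of $F$ is $\nu$-null and the standard suspension theory applies without change. I would then remark that, this null-set issue aside, all the estimates are identical to \cite{PesSen08} and omit the repetition.
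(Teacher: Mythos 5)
The paper itself offers no proof of this proposition: it is quoted verbatim from \cite[Theorem 2.3]{PesSen08} with the remark that Abramov's and Kac's formulae ``extend verbatim'' to this setting, so there is no argument in the text to compare yours against line by line. Your Kac half is fine: unwinding the definition of $\mathcal{L}(\nu)$ against simple functions, applying Tonelli separately to $\varphi^{+}$ and $\varphi^{-}$, and using $\sup\varphi<\infty$ together with $Q_\nu<\infty$ to make the subtraction legitimate is exactly the routine computation one expects, and your flag about where care is needed is the right one.

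The Abramov half, however, contains a genuine gap in the justification of its key step. Your strategy (Abramov's theorem for the tower $(\hat Y,\hat f,\hat\mu)$ over $(W,F,\nu)$ with roof $\tau$, followed by the claim that the projection $(x,k)\mapsto f^k(x)$ onto $(Y,f,\mathcal{L}(\nu))$ preserves entropy) is the correct architecture, but your reason for the entropy preservation is wrong. You assert that the overlaps of the images $f^{\tau(J)}(J)$ ``occur on the images of the sets $\overline{J}\setminus J$'' and are therefore null by Condition (I3). That is not what the paper says: Condition (I1) only requires $\bigcup_J f^{\tau(J)}(J)=W$, and Section 2 states explicitly that for different $J$ the images $f^{\tau(J)}(J)$ may overlap on genuinely large sets (this is why $F$ may fail to be invertible), while (I3) concerns the set $S^{\Z}\setminus\check S$ and says nothing about these overlaps. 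Moreover, the relevant obstruction is not the non-invertibility of $F$ at all but the possible non-injectivity of the tower projection: distinct tower points $(x,k)\in J\times\{k\}$ and $(x',k')\in J'\times\{k'\}$ with $f^k(x)=f^{k'}(x')$ can occur on sets of positive measure whenever the orbit segments $\{f^i(J)\}_{i<\tau(J)}$ intersect, and this happens precisely when $\tau$ is not a first return time. The factor map gives only $h_{\mathcal{L}(\nu)}(f)\le h_{\hat\mu}(\hat f)=h_\nu(F)/Q_\nu$ for free; the reverse inequality needs an actual argument (e.g., that the extension has at most countable fibres, hence zero relative entropy, or the argument of \cite{Zwe05}/\cite{PesSen08}), and that argument is precisely the content you would need to supply rather than dismiss as a null-set issue resolved by (I3).
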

The study of the existence and uniqueness of equilibrium measures for the induced system $(F,W)$ is carried out by conjugating the induced system to the two-sided full shift over the countable alphabet $S$. This requires that the potential function $\Phi:=\bar\varphi\circ \pi$ be well defined on $S^\Z$, where $\pi$ is the coding map defined by \eqref{def:pi} (see also Proposition \ref{conjugacy}). To this end we require that
\begin{enumerate}
\bigskip
\item[{\bf (P1)}]
the induced potential $\overline{\varphi}$ can be extended by continuity to a function on $\bar{J}$ for every $J\in S$.
\bigskip
\end{enumerate}
Denote the potential induced by the \emph{normalized} potential
$\varphi-P_L(\varphi)$ by
$$
\varphi^+:=\overline{\varphi-P_L(\varphi)}=\overline{\varphi}-P_L(\varphi)\tau
$$
and let $\Phi^+:=\varphi^+\circ\pi$ where $\pi$ is the coding map from \eqref{def:pi}.
\begin{thm}\label{liftgibbs}
Let $\{S,\tau\}$ be an inducing scheme of hyperbolic type satisfying Conditions (I3) and (I4) and $\varphi$ a potential satisfying Condition (P1). Assume that $\Phi$ has strongly summable variations and $P_G(\Phi)<\infty$. Assume also that $P_G(\Phi^+)<\infty$ and $\sup_{a\in S^\Z}\,\Phi^+(a)<\infty$. Then
\begin{enumerate}
\item [(a)] there exists a $\sigma$-invariant ergodic Gibbs measure 
$\nu_{\Phi^+}$ for $\Phi^+$;
\item[(b)] if $h_{\nu_{\Phi^+}}(\sigma)<\infty$ then 
$\nu_{\Phi^+}$ is the unique equilibrium measure for 
$\Phi^+$;
\item[(c)] if $h_{\nu_{\Phi^+}}(\sigma)<\infty$, then the measure 
$\nu_{\varphi^+}:=\pi_*\nu_{\Phi^+}$ is a unique $F$-invariant ergodic equilibrium measure for $\varphi^+$;
\item[(d)] if $P_G(\Phi^+)=0$ and $Q_{\nu_{\varphi^+}}<\infty$ then $\mu_\varphi={\mathcal L}(\nu_{\varphi^+})$ is the unique equilibrium ergodic measure in the class $\mathcal{M}_L(f, Y)$ of liftable measures.
\end{enumerate}
\end{thm}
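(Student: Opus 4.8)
The plan is to reduce the four claims, one layer at a time, from the inducing scheme to the induced map $F$ and then to the two-sided countable shift $(S^\Z,\sigma)$, where Theorem~\ref{gibbs2} already does the work. Parts (a) and (b) should follow by simply applying Theorem~\ref{gibbs2} to $\Phi^+$ in place of $\Phi$, so first I would check that $\Phi^+$ meets its hypotheses. Since $\tau$ is constant on each $J\in S$ and any cylinder $\mathcal{C}_{-n+1}[b_{-n+1},\dots,b_{n-1}]$ with $n\ge1$ prescribes the coordinate $a_0$, the function $\tau\circ\pi$ is constant on such cylinders; hence $V_n(\Phi^+)=V_n(\Phi)$ for every $n\ge 1$ and $\Phi^+$ has strongly summable variations because $\Phi$ does. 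Together with the standing hypotheses $\sup_{S^\Z}\Phi^+<\infty$ and $P_G(\Phi^+)<\infty$, Theorem~\ref{gibbs2}(1)--(2) yields the variational principle for $\Phi^+$ and a unique $\sigma$-invariant ergodic Gibbs measure $\nu_{\Phi^+}$, which is (a); and when $h_{\nu_{\Phi^+}}(\sigma)<\infty$, Theorem~\ref{gibbs2}(3) shows $\nu_{\Phi^+}\in\mathcal{M}_{\Phi^+}(\sigma)$ is the unique equilibrium measure for $\Phi^+$, realizing $h_{\nu_{\Phi^+}}(\sigma)+\int\Phi^+\,d\nu_{\Phi^+}=P_G(\Phi^+)$, which is (b).

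For (c) I would transport $\nu_{\Phi^+}$ to $W$ through the coding map $\pi$ of \eqref{def:pi}. Condition (I3) forces the Gibbs measure $\nu_{\Phi^+}$ to be carried by $\check S$, and by Proposition~\ref{conjugacy} the restriction $\pi|_{\check S}$ is a Borel bijection onto $W$ that conjugates $\sigma|_{\check S}$ to $F$; hence it is an entropy-preserving measure isomorphism and $\nu_{\varphi^+}:=\pi_*\nu_{\Phi^+}$ is $F$-invariant and ergodic with $h_{\nu_{\varphi^+}}(F)=h_{\nu_{\Phi^+}}(\sigma)<\infty$, $\int_W\varphi^+\,d\nu_{\varphi^+}=\int\Phi^+\,d\nu_{\Phi^+}>-\infty$, and $h_{\nu_{\varphi^+}}(F)+\int_W\varphi^+\,d\nu_{\varphi^+}=P_G(\Phi^+)$. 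Conversely any $\eta\in\mathcal{M}(F,W)$ with $\int\varphi^+\,d\eta>-\infty$ pulls back under $(\pi|_{\check S})^{-1}$ to a measure $\hat\eta\in\mathcal{M}_{\Phi^+}(\sigma)$ of the same entropy and integral, so the variational principle for $\Phi^+$ gives $h_\eta(F)+\int_W\varphi^+\,d\eta\le P_G(\Phi^+)$, with equality forcing $\hat\eta=\nu_{\Phi^+}$ by (b) and hence $\eta=\nu_{\varphi^+}$. This makes $\nu_{\varphi^+}$ the unique $F$-invariant ergodic equilibrium measure for $\varphi^+$.

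For (d), assume $P_G(\Phi^+)=0$ and $Q_{\nu_{\varphi^+}}<\infty$ and set $\mu_\varphi:=\mathcal{L}(\nu_{\varphi^+})$; by the properties of $\mathcal{L}$ above and the definition of $\mathcal{M}_L(f,Y)$ in \eqref{L-1}, $\mu_\varphi\in\mathcal{M}_L(f,Y)$ with induced measure $\nu_{\varphi^+}$. For any $\mu\in\mathcal{M}_L(f,Y)$ with induced measure $\nu_\mu:=\mathcal{L}^{-1}(\mu)$ (so $Q_{\nu_\mu}<\infty$), the identity $\varphi^+=\overline\varphi-P_L(\varphi)\tau$ and Abramov's and Kac's formulae (Proposition~\ref{abramovkac}) give, whenever $\int_W\overline\varphi\,d\nu_\mu>-\infty$,
\[
h_{\nu_\mu}(F)+\int_W\varphi^+\,d\nu_\mu=Q_{\nu_\mu}\Big(h_\mu(f)+\int_X\varphi\,d\mu-P_L(\varphi)\Big),
\]
so that dividing by $Q_{\nu_\mu}>0$ and using the variational principle for $\varphi^+$ from (c) with $P_G(\Phi^+)=0$,
\[
h_\mu(f)+\int_X\varphi\,d\mu-P_L(\varphi)=\frac{1}{Q_{\nu_\mu}}\Big(h_{\nu_\mu}(F)+\int_W\varphi^+\,d\nu_\mu\Big)\le 0,
\]
with equality precisely when $\nu_\mu$ is an equilibrium measure for $\varphi^+$, i.e.\ (by (c)) when $\nu_\mu=\nu_{\varphi^+}$, i.e.\ when $\mu=\mu_\varphi$. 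Since $\nu_{\varphi^+}$ realizes the value $0=P_G(\Phi^+)$ and has $\int_W\overline\varphi\,d\nu_{\varphi^+}>-\infty$, the choice $\mu=\mu_\varphi$ attains $P_L(\varphi)$, so $\mu_\varphi$ is the unique equilibrium measure in $\mathcal{M}_L(f,Y)$. The case $\int_W\overline\varphi\,d\nu_\mu=-\infty$ is immaterial: by Kac's formula applied to truncations of $\overline\varphi$ one then has $\int_X\varphi\,d\mu=-\infty$, while $h_\mu(f)\le h_{top}(f)<\infty$ and $P_L(\varphi)>-\infty$ by Theorem~\ref{boundedenergy}, so such a $\mu$ cannot be equilibrium.

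I expect the technical core to be step (c): one must know that the Gibbs measure concentrates on $\check S$ — exactly the purpose of Condition (I3) — and that the resulting partial conjugacy through $\pi$ is faithful enough to carry both the variational principle and the uniqueness statement down to $(W,F)$. The remaining difficulty is bookkeeping in step (d), namely tracking when $\int_W\overline\varphi\,d\nu$ can equal $-\infty$ and appealing to the finiteness of $P_L(\varphi)$ (Theorem~\ref{boundedenergy}) and of $h_\mu(f)$; no essentially new idea is needed there.
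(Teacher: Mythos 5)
Your proposal is correct and follows the same overall reduction as the paper: (a) and (b) come from Theorem~\ref{gibbs2} applied to $\Phi^+$ after checking $V_n(\Phi^+)=V_n(\Phi)$ (your cylinder argument for why $\tau\circ\pi$ has zero variations is the detail the paper leaves implicit, and the paper additionally notes that Theorem~\ref{boundedenergy} is what makes $P_L(\varphi)$, hence $\Phi^+$, well defined); (c) transports the Gibbs measure to $W$ using (I3) and Proposition~\ref{conjugacy}; (d) is Abramov--Kac. The one place where your route genuinely differs is (c): you treat $\pi|_{\check S}$ as a pointwise Borel conjugacy between $(\check S,\sigma)$ and $(W,F)$ and push measures back and forth through it. The paper instead factors through the natural extension: it defines $\phi$ on $\tilde W$, pushes $\nu_{\Phi^+}$ forward by the bijection $\tilde\pi$ of \eqref{def:tildepi}, and then uses the one-to-one entropy-preserving correspondence between $\mathcal M(F,W)$ and $\mathcal M(\tilde F,\tilde W)$. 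The reason for this detour is that $\sigma$ is invertible while $F$ in general is not (the images $f^{\tau(J)}(J)$ may overlap), so only the past orbit $(x_n)_{n\le 0}$ -- not the single point $x_0$ -- determines $\underline a$ via (I2)(b); your appeal to the bijectivity of $\pi|_{\check S}$ is licensed by the literal wording of Proposition~\ref{conjugacy}(2), but the robust statement is item (3) there, and the measure-level correspondence the paper uses is the safe way to get the entropy- and integral-preserving bijection you need. On the other hand, your treatment of (d) is more complete than the paper's: the paper only verifies that $\mu_\varphi$ attains $P_L(\varphi)$, while you also carry out the converse direction (any competitor lifts to an equilibrium state for $\varphi^+$, which is unique by (c)) and dispose of the case $\int_W\overline\varphi\,d\nu_\mu=-\infty$ using the finiteness of $h_{\mathrm{top}}(f)$ and of $P_L(\varphi)$.
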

\begin{proof}
Statements (a) and (b) follow from Theorem~\ref{gibbs2} after noting that by  Theorem~\ref{boundedenergy}, $P_L(\varphi)$ is finite and that $V_n(\Phi)=V_n(\Phi^+)$.

To prove Statement (c), consider the function $\phi:\tilde{W}\to\R$ defined by $\phi((x_n)_{n\le 0}):=\overline{\varphi}(x_0)$ and 
$\Phi=\phi\,\circ\,\tilde{\pi}$ and $\Phi^+=\phi^+\circ\ \tilde{\pi}$ where 
$\phi^+=\phi-P_L(\varphi)\tau$ and $\tilde{\pi}:S^\Z\to\tilde{W}$ is the projection defined by~\eqref{def:tildepi}. By Condition (I3), the measure $\nu_{\Phi^+}$ gives full weight to the set $\check{S}$. Proposition~\ref{conjugacy} then implies that the measure 
$\tilde{\nu}_{\phi^+}:=\tilde{\pi}_*\nu_{\Phi^+}$ is the unique 
$\tilde{F}$-invariant ergodic equilibrium measure for $\phi^+$, i.e.,
$$
h_{\tilde{\nu}_{\phi^+}}(\tilde{F})+\int\phi^+\,d \tilde{\nu}_{\phi^+}=\sup_{\nu\in\mathcal{M}( \tilde{F}, \tilde{W})}\{h_\nu(\tilde{F})+\int\phi^+\,d\nu\},
$$
where $\mathcal{M}(\tilde{F}, \tilde{W})$ denotes the set of ergodic 
$\tilde{F}$-invariant Borel probabilities on $\tilde{W}$. 

Recall now that there is a one-to-one entropy preserving correspondence between $\mathcal{M}(F, W)$ and 
$\mathcal{M}(\tilde{F}, \tilde{W})$.
Also, for any 
$\overline{\varphi}\in L^1(\nu)$ the function $\phi$ on $\tilde{W}$ 
satisfies $\int\overline{\varphi}\,d\nu=\int\phi\,d\tilde{\nu}$. Statement (c) follows since $\nu_{\varphi^+}$ is the image of $\tilde{\nu}_{\phi^+}$ under this correspondence.

To prove Statement (d) observe that $Q_{\nu_{\varphi^+}}<\infty$ and Proposition~\ref{abramovkac} imply $h_{\nu_{\Phi^+}}(\sigma)<\infty$. Statement (c) and Proposition~\ref{abramovkac} imply
$$0=h_{\varphi^+}(F)+\int\varphi^+\,d\nu_{\varphi^+}=\frac{1}{Q_{\nu_{\varphi^+}}}\left(h_{\mu_\varphi}(f)+\int \varphi\,d\mu_\varphi-P_L(\varphi)\right)$$ 
hence
$$
h_{\mu_\varphi}(f)+\int \varphi\,d\mu_\varphi=P_L(\varphi).
$$
\end{proof}

To describe the ergodic properties of equilibrium measures assume that $\nu_{\varphi^+}$ has \emph{exponential tail}: there exist $C>0$ and $0<\theta<1$ such that for all $n>0$,
$$\nu_{\varphi^+}(\{x\in W: \tau(x)\ge n\})\le C\theta^n.$$

\begin{thm}\label{ergodic}
Assume the conditions of Theorem \ref{liftgibbs} hold. Assume also that the induced function $\overline\varphi$ on $W$ is locally H\"older continuous. If $\nu_{\varphi^+}$ has exponential tail then the unique equilibrium measure $\mu_\varphi$ has exponential decay of correlations and satisfies the CLT with respect to the class of functions whose induced functions on $W$ are bounded locally H\"older continuous.
\end{thm}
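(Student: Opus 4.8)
The plan is to deduce both conclusions from the symbolic statistical results already available for the induced system and then to transfer them from the base $(W,F,\nu_{\varphi^+})$ to the tower $(Y,f,\mu_\varphi)$ by a Young-type argument, with the exponential tail hypothesis doing the essential work.

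First I would check that the normalized induced potential lifts to a locally H\"older function on the symbolic space. The inducing time $\tau$ is constant on each $J\in S$, so (as already noted in the proof of Theorem~\ref{liftgibbs}) $V_n(\Phi^+)=V_n(\Phi)$; and since $\overline\varphi$ is locally H\"older on $W$ while $\pi$ maps each cylinder $\mathcal C_{-n+1}[b_{-n+1},\dots,b_{n-1}]$ into a single element of the corresponding dynamical refinement of $S$, one gets $V_n(\Phi)\le Cr^n$. Thus $\Phi^+$ is locally H\"older, hence has strongly summable variations; together with the hypotheses $\sup_{S^\Z}\Phi^+<\infty$ and $P_G(\Phi^+)<\infty$ (in fact $P_G(\Phi^+)=0$ as required in Theorem~\ref{liftgibbs}(d)), Theorem~\ref{sarig1} applies to $\Phi^+$ and its Gibbs measure $\nu_{\Phi^+}$ has exponential decay of correlations and satisfies the CLT for bounded locally H\"older observables on $S^\Z$. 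By Condition (I3) the measure $\nu_{\Phi^+}$ is carried by $\check S$, on which $\pi$ is a bijection onto $W$ conjugating $\sigma$ to $F$ (Proposition~\ref{conjugacy}); pushing forward, $\nu_{\varphi^+}=\pi_*\nu_{\Phi^+}$ has exponential decay of correlations and satisfies the CLT for $F$ with respect to every $g$ on $W$ for which $g\circ\pi$ is locally H\"older on $S^\Z$, in particular for bounded locally H\"older functions on $W$.

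Next I would transfer these properties to the tower. By construction $Y$ is a tower over $(W,F)$ with height $\tau$, the map $f$ acts as the tower map, and $\mu_\varphi=\mathcal L(\nu_{\varphi^+})$ is the normalized lift of $\nu_{\varphi^+}$ (with $Q_{\nu_{\varphi^+}}=\int_W\tau\,d\nu_{\varphi^+}<\infty$ by the exponential tail). Since $\nu_{\varphi^+}$ is the Gibbs measure of a locally H\"older potential of zero Gurevich pressure over the full-branched Markov map $F$, it is a Gibbs--Markov base measure, and the exponential tail places us in the setting of \cite{You98}: one obtains exponential decay of correlations and the CLT for $f$ relative to $\mu_\varphi$ for every observable on $Y$ that lifts to a dynamically H\"older function on the abstract tower, and a bounded $h$ on $Y$ whose induced function $\overline h(x)=\sum_{k=0}^{\tau(x)-1}h(f^k x)$ is bounded locally H\"older on $W$ belongs to this class. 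For the CLT a self-contained alternative is available: $\overline h$ is locally H\"older on $W$, so the CLT for its Birkhoff sums under $F$ follows from the previous step, and since $\tau\in L^2(\nu_{\varphi^+})$ (again by the exponential tail) the standard comparison between Birkhoff sums of $h$ under $f$ and of $\overline h$ under $F$ gives the CLT for $h$.

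The step I expect to be the main obstacle is this tower transfer. One must verify that $\nu_{\varphi^+}$ genuinely satisfies the hypotheses of the abstract tower estimates: the Gibbs property from Theorems~\ref{gibbs2} and~\ref{liftgibbs} and the full-branched Markov structure of $F$ give this, while the two-sided (hyperbolic) nature of the scheme is absorbed by the bounded cohomology $u$ of Lemma~\ref{bowentrick}, which amounts to quotienting along the stable symbolic direction and reduces the base to a one-sided Gibbs--Markov map; one must also match the observable classes on $Y$, on the abstract tower and on $W$. A secondary point requiring care is the passage from ``locally H\"older on $W$'' to a summable-variation estimate on $S^\Z$ via the coding map $\pi$: in the abstract setting this is the definitional matching used above, whereas in the applications of Sections~6--9 it is supplied by the hyperbolic geometry of the inducing scheme.
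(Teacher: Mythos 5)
Your proposal is correct and follows essentially the route the paper intends: the paper's own proof is just the citation to \cite[Theorem 4.6]{PesSen08}, and what you write out — local H\"olderness of $\Phi^+$, Theorem~\ref{sarig1} for the symbolic system, push-forward by $\pi$ to $(F,W,\nu_{\varphi^+})$, and the Young-tower transfer under the exponential-tail hypothesis, with Lemma~\ref{bowentrick} absorbing the two-sided nature of the shift — is precisely that argument adapted to the hyperbolic setting. No gaps.
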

\begin{proof} See \cite[ Theorem 4.6]{PesSen08}.
\end{proof}

We describe some \emph{verifiable} conditions on the potential function 
$\varphi$ under which the assumptions of Theorem \ref{liftgibbs} hold:
\begin{enumerate}
\bigskip
\item[{\bf (P2)}]
there exist $C>0$ and $0<r<1$ such that for any $n\ge 1$
$$
V_n(\phi):=V_n(\Phi)\le C r^n;
$$
\bigskip
\item[{\bf (P3)}]
\[
\sum_{J\in S}\,\sup_{x\in J}\exp \bar\varphi(x)<\infty;
\]
\bigskip
\item[{\bf (P4)}]
there exists $\epsilon>0$ such that
\[
\sum_{J\in S}\tau(J)\sup_{x\in J}\exp(\varphi^+(x)+\epsilon\tau(x))<\infty.
\]
\bigskip
\end{enumerate}
\begin{thm}\label{conditions} Let $\{S, \tau\}$ be an inducing scheme of hyperbolic type satisfying Conditions (I3) and (I4). The following statements hold:
\begin{enumerate}
\item Condition (P2) implies that the function $\Phi$ has strongly summable variations;
\item Condition (P3) implies the finiteness of the Gurevich pressure $P_G(\Phi)<\infty$ and $\sup_{a\in S^\Z}\,\Phi(a)<\infty$;
\item Condition (P4) implies that $\sup_{a\in S^\Z}\,\Phi^+(a)<\infty$ and $P_G(\Phi^+)=0$;
\item If $\Phi$ has strongly summable variations, then Condition (P3) implies $Q_{\nu_{\varphi^+}}<\infty$.
\end{enumerate}
\end{thm}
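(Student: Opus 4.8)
The plan is to prove the four implications separately; each reduces to an elementary estimate on variations or on sums indexed by $S$, with the variational principle (Theorem~\ref{gibbs2}(1)) and the Abramov--Kac relations (Proposition~\ref{abramovkac}) entering only where the Gurevich pressure must be identified exactly. Statement~(1) is immediate: (P2) gives $V_n(\Phi)\le Cr^n$ with $0<r<1$, so $\sum_{n\ge1}nV_n(\Phi)\le C\sum_{n\ge1}nr^n<\infty$; the same bound shows $\Phi$ is locally H\"older, and since $\tau$ is constant on each $J$ one has $V_n(\Phi^+)=V_n(\Phi)$ for $n\ge1$, so $\Phi^+$ is locally H\"older as well. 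For statement~(2): $\Phi(\underline a)=\overline\varphi(\pi(\underline a))\le\sup_{x\in J_{a_0}}\overline\varphi(x)$ by (P1), and finiteness of the series in (P3) bounds its terms, hence bounds $\sup_J\sup_{x\in J}\overline\varphi$; so $\sup\Phi<\infty$. For the pressure, a periodic point $\underline a$ with $\sigma^n\underline a=\underline a$ and $a_0=b$ satisfies $\Phi_n(\underline a)\le\sum_{k=0}^{n-1}\sup_{x\in J_{a_k}}\overline\varphi(x)$, and summing over the $n$-words starting with $b$ (which, for the full shift, are in bijection with such periodic points) gives
\[
\sum_{\sigma^n\underline a=\underline a}e^{\Phi_n(\underline a)}\mathbb{I}_{[b]}(\underline a)\ \le\ \Big(\sup_{x\in J_b}e^{\overline\varphi(x)}\Big)\Big(\sum_{J\in S}\sup_{x\in J}e^{\overline\varphi(x)}\Big)^{n-1},
\]
so $P_G(\Phi)\le\log\sum_{J\in S}\sup_{x\in J}e^{\overline\varphi(x)}<\infty$.

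For statement~(3), $\sup\Phi^+<\infty$ follows from (P4) exactly as above, since $\tau(J)e^{\epsilon\tau(J)}\ge e^{\epsilon}\ge1$ makes each term of the series dominate $\sup_{x\in J}e^{\varphi^+(x)}$. I would obtain the identity $P_G(\Phi^+)=0$ as two inequalities. For $P_G(\Phi^+)\le0$: $\Phi^+$ is bounded above with summable variations, so by Theorem~\ref{gibbs2}(1), $P_G(\Phi^+)=\sup_{\nu\in\mathcal M_{\Phi^+}(\sigma)}\{h_\nu(\sigma)+\int\Phi^+\,d\nu\}$; by Condition~(I3) every ergodic competitor $\nu$ is carried by the invariant set $\check S$ (otherwise $\nu$ would live on $S^\Z\setminus\check S$ and assign zero mass to every cylinder, impossible for a probability measure), so $\overline\nu:=\pi_*\nu\in\mathcal M(F,W)$ with $h_{\overline\nu}(F)=h_\nu(\sigma)$ and $\int\Phi^+\,d\nu=\int\varphi^+\,d\overline\nu$. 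Taking logarithms in the series of (P4) gives a uniform bound $\sup_{x\in J}\varphi^+(x)\le C-\epsilon\tau(J)$, hence $\int\varphi^+\,d\overline\nu\le C-\epsilon\,Q_{\overline\nu}$, so $\nu\in\mathcal M_{\Phi^+}(\sigma)$ already forces $Q_{\overline\nu}<\infty$; then Proposition~\ref{abramovkac} and the definition of $P_L(\varphi)$ yield
\[
h_\nu(\sigma)+\int\Phi^+\,d\nu=h_{\overline\nu}(F)+\int\varphi^+\,d\overline\nu=Q_{\overline\nu}\Big(h_{\mathcal L(\overline\nu)}(f)+\int\varphi\,d\mathcal L(\overline\nu)-P_L(\varphi)\Big)\le0.
\]
For $P_G(\Phi^+)\ge0$ I would use the characterization of $P_L(\varphi)$ from \cite{PesSen08} as the threshold $\inf\{c:P_G((\overline{\varphi-c})\circ\pi)\le0\}$; (P4) makes $P_G((\overline{\varphi-c})\circ\pi)$ finite for $c$ slightly below this threshold (apply the estimate of statement~(2) to $(\varphi^++\epsilon\tau)\circ\pi=(\overline{\varphi-(P_L(\varphi)-\epsilon)})\circ\pi$, whose defining series is dominated by that of (P4)), and convexity of $c\mapsto P_G((\overline{\varphi-c})\circ\pi)$ then forces continuity, hence the value $0$, at $c=P_L(\varphi)$.

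Finally, for statement~(4): once $\Phi$ has strongly summable variations the Gibbs measure $\nu_{\Phi^+}$ exists and, by (I3), is carried by $\check S$, where $\pi$ is a bijection onto $W$; hence $Q_{\nu_{\varphi^+}}=\int_W\tau\,d\nu_{\varphi^+}=\sum_{J\in S}\tau(J)\,\nu_{\Phi^+}([J])$. The Gibbs inequality for one-symbol cylinders gives $\nu_{\Phi^+}([J])\le C_0\,e^{-P_G(\Phi^+)}\sup_{x\in J}e^{\varphi^+(x)}$, so, substituting $\varphi^+=\overline\varphi-P_L(\varphi)\tau$,
\[
Q_{\nu_{\varphi^+}}\le C_0\,e^{-P_G(\Phi^+)}\sum_{J\in S}\tau(J)\,e^{-P_L(\varphi)\tau(J)}\sup_{x\in J}e^{\overline\varphi(x)}\ \le\ C_0\,e^{-P_G(\Phi^+)}\Big(\sup_{t\ge1}t\,e^{-P_L(\varphi)t}\Big)\sum_{J\in S}\sup_{x\in J}e^{\overline\varphi(x)},
\]
which is finite by (P3), using that $\tau(J)e^{-P_L(\varphi)\tau(J)}$ is bounded uniformly over $J$. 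The one step that is not routine is the exact identity $P_G(\Phi^+)=0$ in statement~(3): $\sup\Phi^+<\infty$ and $P_G(\Phi^+)\le0$ are direct once the variational principle and Abramov--Kac are available, but excluding $P_G(\Phi^+)<0$ is precisely what forces the factor $\tau(J)e^{\epsilon\tau(J)}$ in (P4) rather than merely $\tau(J)$: the $\epsilon\tau$ simultaneously makes every competitor in the variational principle have finite return-time integral and keeps the induced pressure finite on a one-sided neighbourhood of $P_L(\varphi)$, ruling out a phase transition there.
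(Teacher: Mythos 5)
The paper itself gives no proof of this theorem beyond the citation to \cite[Theorem 4.5]{PesSen08}, so your reconstruction can only be judged on its own terms. Statements (1)--(3) are handled correctly: (1) is immediate; the counting bound $\sum_{\sigma^n\underline a=\underline a}e^{\Phi_n}\mathbb I_{[b]}\le \sup_{J_b}e^{\overline\varphi}\cdot(\sum_J\sup_Je^{\overline\varphi})^{n-1}$ for (2) is right for the full shift; and for (3) your two-inequality scheme (Abramov--Kac plus Condition (I3) for $P_G(\Phi^+)\le 0$, and convexity/continuity of $c\mapsto P_G(\overline{\varphi-c}\circ\pi)$ on a left neighbourhood of $P_L(\varphi)$ --- supplied by (P4) --- for $P_G(\Phi^+)\ge 0$) is exactly the mechanism used in \cite{PesSen08}. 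Your closing remark correctly identifies the role of the $e^{\epsilon\tau}$ factor in (P4). One small caveat: the variational principle you invoke requires strongly summable variations, so (P2) (or statement (1)) is implicitly in force throughout (3); this is harmless in context but should be said.

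Statement (4) has a genuine gap. Your final bound rests on the claim that $\tau(J)e^{-P_L(\varphi)\tau(J)}$ is bounded uniformly over $J$, i.e.\ that $\sup_{t\ge1}te^{-P_L(\varphi)t}<\infty$. This holds only if $P_L(\varphi)>0$, which nothing in the hypotheses guarantees; in the paper's principal application one has $P_L(\varphi_1)=0$ (Lemma~\ref{pres-est}), in which case your bound degenerates to $\sum_J\tau(J)\sup_Je^{\overline\varphi}$, which is strictly stronger than what (P3) provides --- and indeed in that application the authors derive $Q_{\nu_{\overline\varphi_1}}<\infty$ from Condition (Y5), not from (P3). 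The extra factor of $\tau(J)$ has to come from somewhere, and the natural source is (P4): dropping the factor $e^{\epsilon\tau(J)}\ge1$ from the series in (P4) gives $\sum_J\tau(J)\sup_Je^{\varphi^+(x)}<\infty$ directly, whence $Q_{\nu_{\varphi^+}}=\sum_J\tau(J)\nu_{\Phi^+}([J])\le C_0e^{-P_G(\Phi^+)}\sum_J\tau(J)\sup_Je^{\varphi^+}<\infty$; alternatively, $P_G(\Phi^++\epsilon\,\tau\circ\pi)<\infty$ together with the variational principle yields $\epsilon\,Q_{\nu_{\varphi^+}}\le P_G(\Phi^++\epsilon\,\tau\circ\pi)-P_G(\Phi^+)<\infty$. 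Either way the argument for (4) should route through (P4) rather than (P3); as written, your derivation does not close.
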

\begin{proof} See the proof in \cite[Theorem 4.5]{PesSen08}.
\end{proof}
The existence and uniqueness of equilibrium measures is now an immediate corollary of Theorems \ref{liftgibbs}, \ref{ergodic} and \ref{conditions}.
\begin{thm}\label{equilibrium1}
Let $\{S,\tau\}$ be an inducing scheme of hyperbolic type satisfying Conditions (I3) and (I4). Assume that the potential function $\varphi$ satisfies Conditions (P1)-(P4). Then 
\begin{enumerate}
\item there exists a unique equilibrium measure 
$\mu_\varphi$ for $\varphi$ among all measures in 
$\mathcal{M}_L(f, Y)$; 
the measure $\mu_\varphi$ is ergodic;
\item if $\nu_{\varphi^+}=\mathcal{L}^{-1}(\mu_\varphi)$ has exponential tail, then $\mu_\varphi$ has exponential decay of correlations and satisfies the CLT with respect to a class of functions which contains all  bounded locally H\"older continuous functions on $Y$.
\end{enumerate} 
\end{thm}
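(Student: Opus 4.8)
The plan is to assemble Theorems~\ref{conditions}, \ref{liftgibbs} and \ref{ergodic}, the only genuine work being to feed the right hypothesis into the right statement and, above all, in the right order. First I would use Theorem~\ref{conditions} to convert the verifiable conditions into the symbolic ones: (P1) makes the induced potential extend continuously to each $\overline J$, so that $\Phi=\overline\varphi\circ\pi$ and $\Phi^+=\varphi^+\circ\pi$ are well defined on $S^\Z$ (Proposition~\ref{conjugacy}); (P2) gives that $\Phi$ has strongly summable variations; (P3) gives $P_G(\Phi)<\infty$ and $\sup_{S^\Z}\Phi<\infty$; and (P4) gives $\sup_{S^\Z}\Phi^+<\infty$ together with $P_G(\Phi^+)=0$. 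By Theorem~\ref{boundedenergy} the constant $P_L(\varphi)$ is finite, so the normalization $\varphi^+=\overline\varphi-P_L(\varphi)\tau$ is legitimate. At this point all the hypotheses of Theorem~\ref{liftgibbs} are in force.

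Next I would run Theorem~\ref{liftgibbs}. Part~(a) produces the $\sigma$-invariant ergodic Gibbs measure $\nu_{\Phi^+}$; only once this measure is in hand can one form $\nu_{\varphi^+}:=\pi_*\nu_{\Phi^+}$ and ask whether $Q_{\nu_{\varphi^+}}=\int_W\tau\,d\nu_{\varphi^+}$ is finite, which is precisely assertion~(4) of Theorem~\ref{conditions} (applicable since $\Phi$ has strongly summable variations and (P3) holds). Thus the two hypotheses of Theorem~\ref{liftgibbs}(d), namely $P_G(\Phi^+)=0$ and $Q_{\nu_{\varphi^+}}<\infty$, are met, and (d) directly gives that $\mu_\varphi=\mathcal{L}(\nu_{\varphi^+})$ is the unique equilibrium measure in $\mathcal{M}_L(f,Y)$. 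Its ergodicity is immediate: $\nu_{\Phi^+}$ is ergodic, hence so is $\nu_{\varphi^+}$ through the coding conjugacy of Proposition~\ref{conjugacy}, and then $\mathcal{L}(\nu_{\varphi^+})$ lies in $\mathcal{M}(f,Y)$ by the Proposition asserting $\mathcal{L}(\nu)\in\mathcal{M}(f,Y)$ when $Q_\nu<\infty$, a class which by definition consists of ergodic measures. (Internally, Theorem~\ref{liftgibbs}(d) turns $Q_{\nu_{\varphi^+}}<\infty$ into the finite-entropy input $h_{\nu_{\Phi^+}}(\sigma)<\infty$ via Proposition~\ref{abramovkac} and $h_{\mu_\varphi}(f)\le h_{\text{top}}(f)<\infty$.) This proves assertion~(1).

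For assertion~(2) I would invoke Theorem~\ref{ergodic}, whose hypotheses are exactly the conditions of Theorem~\ref{liftgibbs} (already verified), local H\"older continuity of $\overline\varphi$ on $W$ (a verbatim restatement of (P2), since $V_n(\overline\varphi)=V_n(\Phi)\le Cr^n$), and the exponential tail of $\nu_{\varphi^+}$ (the running hypothesis of~(2)). Theorem~\ref{ergodic} then yields exponential decay of correlations and the CLT for $\mu_\varphi$ with respect to the class of observables whose induced functions on $W$ are bounded and locally H\"older, so it only remains to check that every bounded locally H\"older continuous function on $Y$ belongs to that class; this is a short verification using (P2) together with the exponential tail to control the contribution of the inducing time. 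I expect this last identification, rather than any of the invocations above, to be the only step that is not pure bookkeeping, and the single real subtlety in the argument to be the ordering flagged above: Theorem~\ref{conditions}(4) and Proposition~\ref{abramovkac} can be brought to bear only after Theorem~\ref{liftgibbs}(a) has delivered the Gibbs measure.
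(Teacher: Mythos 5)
Your proposal is correct and is exactly the paper's argument: the paper states Theorem~\ref{equilibrium1} as an immediate corollary of Theorems~\ref{liftgibbs}, \ref{ergodic} and \ref{conditions}, and you assemble those three results in the same way (with the additional, accurate observation about the order in which Theorem~\ref{conditions}(4) can be invoked). No discrepancies to report.
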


\section{Liftability of invariant measures}
\label{sec:liftability}

In this section we describe some conditions under which all ergodic invariant measures of sufficiently large entropy are liftable.  

Denote the set of elements of the inducing scheme $\{S, \tau\}$ for which the inducing time is the \emph{first return time} by
$$
S'=\{J\in S: f^i(J)\cap W=\emptyset \text{ for }0<i<\tau(J)\}$$
and the set of those whose return time is \emph{not} a first return time by $S^*:=S\setminus S'$. Let 
$$S(n):= \{J\in S:\tau(J)=n\}$$
and $S'(n):=S'\cap S(n)$ and $S^*(n):=S^*\cap S(n)$. Also let $S_n:=\sharp\, S(n)$ denote the cardinality of $S(n)$ and define $S^*_n$ and $S'_n$ similarly. 

\begin{thm}\label{liftability}
Assume the following conditions hold:
\begin{enumerate}
\item[(L1)] For any $\epsilon>0$, there exist numbers $l=l(\epsilon)$ and $N=N(\epsilon)$ such that for any
$\tau(J)>N$,
$$
\text{diam}(f^k(J))<\epsilon \, \text{ for } k=\frac{l}2, \dots, \tau(J)-\frac{l}2-1;
$$
\item[(L2)] There exists $h>0$ such that for all sufficiently large $n$,
\begin{equation}\label{eq:element-count}
S^*_n\le e^{hn}.
\end{equation}
\end{enumerate}
Then any ergodic $\mu\in\mathcal{M}(f, X)$ with $\mu(W)>0$ and $h_\mu(f)>h$ is liftable.
\end{thm}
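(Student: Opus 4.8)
The plan is to reduce liftability to an integrability statement for $\tau$ and then control the contribution of the non–first-return part of the inducing scheme by means of the entropy gap $h_\mu(f)-h$.

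\emph{Step 1: reduction.} Since $\mu$ is ergodic and $\mu(W)>0$, Poincar\'e recurrence and Kac's formula apply: the normalized restriction $\mu_W:=\mu|_W/\mu(W)$ is ergodic and invariant under the first return map $R:W\to W$ with first return time $\rho$, and $\int_W\rho\,d\mu_W=1/\mu(W)<\infty$. By the remark following the definition of $\mathcal M_L(f,Y)$ (Zweimüller, \cite{Zwe05}), it suffices to show $\tau\in L^1(Y,\mu)$, i.e. $\sum_{J\in S}\tau(J)\mu_W(J)<\infty$. Split this sum over $S'$ and $S^*$. On every $J\in S'$ the inducing time equals the first return time, so $\sum_{J\in S'}\tau(J)\mu_W(J)=\int_{\bigcup_{S'}J}\rho\,d\mu_W\le 1/\mu(W)$. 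Hence the whole problem reduces to proving
\[
\sum_{n\ge 1}n\,\mu_W(W^*_n)<\infty,\qquad W^*_n:=\bigcup_{J\in S^*(n)}J .
\]

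\emph{Step 2: the key estimate $\mu_W(W^*_n)\le C\theta^n$.} Fix $\gamma>0$ with $h+2\gamma<h_\mu(f)$ and apply (L1) with $\epsilon=\epsilon(\gamma)$ small, obtaining $l=l(\epsilon)$ and $N=N(\epsilon)$. For $n>N$ and $J\in S^*(n)$, (L1) says that $f^k(J)$ has diameter $<\epsilon$ for all $l/2\le k\le n-l/2-1$; choosing any $z_J\in J$, this means $J$ is contained in the dynamical (Bowen) set $\{y:\ d(f^ky,f^kz_J)<\epsilon,\ l/2\le k\le n-l/2-1\}$, so $f^{l/2}(J)$ lies in an ordinary $(n-l,\epsilon)$-Bowen ball. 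By (L2) there are at most $e^{hn}$ elements $J\in S^*(n)$. Because $h<h_\mu(f)$ we have $e^{hn}<e^{(h_\mu(f)-\gamma)(n-l)}$ for $n$ large, and Katok's entropy formula together with the Brin--Katok local entropy theorem bound the $\mu$-measure of a union of so few Bowen balls of generation $n-l$ (for $\epsilon$ small) by $e^{-cn}$ for some $c=c(\gamma)>0$; undoing the shift by $f^{l/2}$ via invariance of $\mu$ gives $\mu_W(W^*_n)\le\mu(W^*_n)\le C\theta^n$. Summing, $\int_W\tau\,d\mu_W<\infty$, and therefore $\mu$ is liftable.

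\emph{Where the difficulty lies.} The delicate step is the Bowen-ball count at the end of Step 2. Katok's formula and the Brin--Katok theorem are almost-everywhere asymptotic statements; on their own they yield only $\mu(W^*_n)\to 0$, whereas convergence of $\sum_n n\,\mu_W(W^*_n)$ needs a genuine exponential rate. To obtain it one must control, uniformly in the centers, the $\mu$-measure of a union of $e^{hn}$ generation-$n$ Bowen balls — a large-deviation–type statement. I would get this by intersecting $W^*_n$ with the set where the Shannon--McMillan--Breiman/Brin--Katok estimate already holds at scale $n$ and bounding the exceptional set separately, using the near-product structure of the natural extension of the tower; equivalently, one can argue directly that $\mu$-a.e.\ bi-infinite orbit admits a parsing into $\tau$-excursions, the point being that the sub-tower built over $S^*$ has topological entropy $\le h<h_\mu(f)$, so $\mu$ cannot be carried by orbits in which $S^*$-excursions are too frequent, and the recurrent returns to $\bigcup_{S'}J$ then supply the cut points needed to close the parsing on both sides. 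Condition (L1) is precisely what turns the combinatorial bound (L2) on $S^*_n$ into a metric covering estimate; restricting (L1) to the middle times $l/2\le k\le\tau(J)-l/2-1$ leaves the two ends of each excursion — of fixed length $l=l(\epsilon)$ — uncontrolled, which costs only the harmless constant factor $e^{(h_\mu(f)+\gamma)l}$ and reflects the hyperbolic geometry in which each excursion is pinched thin only in its interior.
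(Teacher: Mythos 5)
Your overall route is genuinely different from the paper's, and unfortunately the difference lands you on a step that does not go through. You reduce liftability to the integrability statement $\sum_{J\in S}\tau(J)\,\mu_W(J)<\infty$ via Zweim\"uller's criterion, handle the $S'$ part by Kac (this part is fine and mirrors Lemma~\ref{escaping-mu-mass}), and then need the exponential bound $\mu(W^*_n)\le C\theta^n$ for the union $W^*_n$ of the $S^*(n)$ elements. That bound is exactly where the argument breaks. Conditions (L1)--(L2) place $W^*_n$ inside a union of at most $e^{hn}$ Bowen balls of generation $n-l$, but the centers $z_J$ of these balls are dictated by the inducing scheme, not by $\mu$-typicality, and neither Katok's entropy formula nor Brin--Katok gives any \emph{uniform} upper bound $\mu(B_{n-l}(z_J,\epsilon))\le e^{-(h_\mu-\gamma)n}$ at prescribed centers: a single atypical ball can carry measure as large as, say, $n^{-2}e^{0}$ without violating either theorem, which already destroys the summability of $n\,\mu(W^*_n)$. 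Your proposed repairs do not close this: intersecting with the set where SMB/Brin--Katok ``already holds at scale $n$'' leaves an exceptional set whose measure tends to $0$ with no rate, so $\sum_n n\,\mu(\text{exceptional}_n)$ is uncontrolled; and the ``parsing into excursions'' remark is a gesture toward a different argument rather than a proof. Note also that you are attempting to prove something formally stronger than the theorem ($\tau\in L^1(\mu|_W)$ rather than liftability), and the paper never establishes, nor needs, any decay of $\mu(W^*_n)$.

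The paper's proof avoids Bowen-ball \emph{measure} estimates entirely. It runs Zweim\"uller's weak-limit construction on the tower: either the limit density $\hat\chi$ is nonzero, in which case $\mu$ lifts, or $\hat\chi=0$ along a subsequence, in which case (Lemmas~\ref{escaping-mu-mass} and~\ref{bigZ}) a set $Z\subset W$ with $\mu(Z)\ge(1-\varepsilon)\mu(W)$ consists of points whose orbit segments of length $n_k$ are, up to density $\delta$, covered by $S^*$-excursions of length $>N$. One then bounds the \emph{topological} entropy of the non-compact set $Z$ in Bowen's sense: (L1) says each long excursion costs only $O(l(\epsilon))$ many $\epsilon$-balls to track, (L2) says there are at most $e^{h\tau}$ choices of excursion, and a Stirling count of the designations gives $h_{\mathrm{top}}(f,Z)\le h$, whence $h_\mu(f)\le h$ by the inverse variational principle (Lemma~\ref{entropybound1}) --- contradicting $h_\mu(f)>h$. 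In other words, the entropy gap is exploited through a \emph{counting} of Bowen balls needed to cover a positive-measure set, not through an upper bound on the measure of individual Bowen balls; this is the idea your proposal is missing. If you want to salvage your Step 2, you would essentially have to reprove this covering estimate, at which point you are back to the paper's argument and the detour through $\tau\in L^1$ is unnecessary.
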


The proof follows ideas from Keller \cite{Kel89} and Zweim\"{u}ller \cite{Zwe05}.
\begin{proof}
Define
$\hat{Y}:=\{(x,k)\colon x\in W,\ 0\le k\le \tau(x)-1\} \subset W\times\N$ and $\hat{f}:\hat{Y}\to \hat{Y}$ by
$$
\hat{f}(x,k):=\begin{cases}
(x,k+1), & k<\tau(x)-1 \\ (F(x),0), & k=\tau(x)-1.
\end{cases}
$$
Define
$\hat{\mu}_0:=i_*\mu|_W$ where $i:W\to \hat{Y}$ is given by $i(x)=(x,0)$ and
$$
\hat{\mu}_n:=\frac{1}{n}\sum_{j=0}^{n-1}\hat{f}^j_*\hat{\mu}_0.
$$
Let $\bar{\mu}_0=\pi_1^*\mu$, where $\pi_1(x,k)=x$ is the projection to the first component. One can show as in the proof of Theorem 1.1 of \cite{Zwe05} that $\hat{\mu}_{n}\ll \bar{\mu}_0$ and $\frac{d\hat{\mu}_{n}}{d\bar{\mu}_0}\le 1$ implying the existence of a function $\hat{\chi}\in L^1(\bar{\mu}_0)$ and a subsequence $n_k$ such that $\frac{d\hat{\mu}_{n_k}}{d\bar{\mu}_0}|_{\hat{E}}\stackrel{weakly}\to \hat{\chi}$ for any bounded set $\hat{E}$, where $\hat{E}$ is bounded if there exists $M>0$ such that $\sup\{k\colon (x,k)\in\hat{E}\}\le M$. As a consequence, for $\hat{\mu}:=\hat{\chi}\cdot \bar{\mu}_0$, one has that $\hat{\mu}_{n_k}(\hat{E})\to \hat{\mu}(\hat{E})$ for any bounded $\hat{E}$. Moreover, if $\hat{\chi}\ne 0$, then the measure $\nu$ defined by
$$
\nu(E):=\sum_{J\in S}\hat{\mu}((J\cap E)\times\{0\})
$$
satisfies ${\mathcal L}(\nu)=\mu$ (see \cite{Zwe05}, (3.2)).

Therefore the statement follows from the
following:
\begin{prop}\label{smallentropy}
Given $\mu\in\mathcal{M}(f, X)$, suppose there exists a sequence $n_k$ such that $\frac{d\hat{\mu}_{n_k}}{d\bar{\mu}_0}|_{\hat{E}}\stackrel{weakly}\to 0$ for any bounded set $\hat{E}$, then $h_{\mu}(f)\le h$.
\end{prop}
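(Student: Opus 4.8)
The plan is to bound $h_\mu(f)$ from above directly, via the Katok entropy formula for ergodic measures, feeding into it the hypothesis (``escape to infinity'' in the tower) together with Conditions (L1), (L2) and Kac's formula for the first return map to $W$. I would first record the elementary identity $\hat\mu_n(\hat E)=\int_W\frac1n\#\{\,0\le j<n:\hat f^{\,j}(x,0)\in\hat E\,\}\,d\mu(x)$, valid for measurable $\hat E\subset\hat Y$ and immediate from the definitions of $\hat\mu_0$ and $\hat\mu_n$. Since $\frac{d\hat\mu_{n_k}}{d\bar\mu_0}|_{\hat E}$ converges weakly to $0$ we have $\hat\mu_{n_k}(\hat E)\to0$ for every bounded $\hat E$; applying this to $\hat Y_M:=\{(x,k)\in\hat Y:k\le M\}$ shows that for each fixed $M$ the $\mu$-average over $W$ of the fraction of times $j<n_k$ at which the orbit of $x$ is at tower-level $\le M$ tends to $0$; in particular the averaged fraction of time spent in $\tau$-excursions of length $\le M$ tends to $0$ along $n_k$.

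Next I would argue that first-return excursions are negligible. As $\mu(W)>0$ and $\mu$ is ergodic, $\mu$-a.e.\ $x\in W$ returns to $W$; let $R$ be the first return time, $F_R$ the first return map, and $\mu_W:=\mu|_W/\mu(W)$ the induced (normalized, $F_R$-invariant) measure. Kac's lemma gives $R\in L^1(\mu_W)$. An element $J\in S'$ gives exactly one first-return excursion (of length $\tau(J)$), so Birkhoff's theorem for $(F_R,\mu_W)$ applied to $R\,\mathbb I_{\{R>T\}}$ shows that the $\mu$-average over $W$ of the fraction of time spent in first-return excursions of length $>T$ converges, in the limit, to a quantity bounded by $\int_{\{R>T\}}R\,d\mu_W$, which tends to $0$ as $T\to\infty$. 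Now fix $\epsilon>0$, let $l=l(\epsilon)$ and $N_0=N(\epsilon)$ be as in (L1) (enlarging $N_0$ freely), and call the \emph{$S^*$-bulk} the set of $(x,c)\in\hat Y$ with $x\in J$ for some $J\in S^*$ having $\tau(J)>N_0$ and $\frac l2\le c\le\tau(J)-\frac l2-1$. Its complement in $\hat Y$ lies in the union of the points in excursions of length $\le N_0$, the points in first-return excursions of length $>N_0$, and the end-segments (first/last $\frac l2$ slots) of the long excursions; by the first paragraph, the Birkhoff estimate above, and the crude bound of at most $n_k/N_0$ long excursions in $[0,n_k)$, respectively, $\limsup_k\hat\mu_{n_k}(\hat Y\setminus S^*\text{-bulk})\le\int_{\{R>N_0\}}R\,d\mu_W+l/N_0$, which is as small as we wish for $N_0$ large. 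Since $\hat\mu_{n_k}(\hat Y\setminus S^*\text{-bulk})=\int_W\big(1-\frac1{n_k}\#\{j<n_k:\hat f^{\,j}(x,0)\in S^*\text{-bulk}\}\big)\,d\mu$, Markov's inequality shows that for every $\eta>0$, with $N_0$ and $k$ large, the set $G_k:=\{x\in W:\hat f^{\,j}(x,0)\in S^*\text{-bulk for at least }(1-\eta)n_k\text{ indices }j<n_k\}$ has $\mu(G_k)\ge\mu(W)-\eta$.

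For the entropy bound, fix $\epsilon>0$, the corresponding $l,N_0$, and a finite cover of $X$ by $K=K(\epsilon)$ sets of diameter $<\epsilon$. For $x\in G_k$, the orbit $x,\dots,f^{n_k-1}x$ is determined up to $\epsilon$ once one specifies (i) which subintervals of $[0,n_k)$ are occupied by the $S^*$-excursions of length $>N_0$, together with the labels $J\in S^*$ of those excursions --- by (L1) this confines the orbit to the $\epsilon$-small set $f^c(J)$ at every bulk time --- and (ii) an element of the cover containing $f^jx$ for each of the at most $\eta n_k$ non-bulk times $j$. By (L2), the number of label-tuples is at most $\prod_i S^*_{\tau(J_i)}\le e^{h\sum_i\tau(J_i)}\le e^{hn_k}$ (for $N_0$ large enough that (L2) applies); the number of choices of subintervals is at most $e^{\rho n_k}$ with $\rho=\rho(\eta,N_0)\to0$ as $\eta\to0$, $N_0\to\infty$ (compositions of a length $\le n_k$ into parts $>N_0$, times the placements of the $\le\eta n_k$ remaining slots); and (ii) costs at most $K^{\eta n_k}$. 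Hence, writing $N_\mu(n,\epsilon,\delta)$ for the least number of $(n,\epsilon)$-Bowen balls covering a set of $\mu$-measure $\ge1-\delta$ and setting $\delta:=1-\mu(W)/2$ (so $\mu(G_k)\ge1-\delta$ for $\eta$ small), we get $\frac1{n_k}\log N_\mu(n_k,\epsilon,\delta)\le h+\rho+\eta\log K+o_k(1)$. Taking $\liminf_k$ and then letting $\eta\to0$, $N_0\to\infty$ (the left side depends on neither) gives $\liminf_k\frac1{n_k}\log N_\mu(n_k,\epsilon,\delta)\le h$ for every $\epsilon>0$. Since $\mu$ is ergodic, the Katok entropy formula yields $h_\mu(f)=\lim_{\epsilon\to0}\liminf_{n\to\infty}\frac1n\log N_\mu(n,\epsilon,\delta)$, and $\liminf_{n\to\infty}\le\liminf_{k\to\infty}$ along the subsequence; therefore $h_\mu(f)\le h$.

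The crux is the second step: one has to show that in the escape regime the orbit spends a vanishing fraction of its time inside \emph{first-return} excursions --- which (L2) does not count --- and this is exactly what the integrability $R\in L^1(\mu_W)$ (Kac) supplies; it is precisely the reason (L2) need control only $S^*_n$ and not $S_n$. A subsidiary technical point, dispatched via Condition (I2) and Proposition~\ref{conjugacy}, is that $f$ need not be invertible and the images $f^k(J)$ may overlap, so statements such as ``$x$ lies in an excursion of length $m$'' are meaningful only $\mu$-a.e.\ and Kac's formula is available only as an inequality; the coding map furnishes the $\mu$-a.e.\ disjointness used above, and an inequality is all that is needed in the estimates.
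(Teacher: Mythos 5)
Your reduction to the ``bulk'' of long $S^*$-excursions is sound and closely parallels the paper's Lemmas~\ref{escaping-mu-mass} and \ref{bigZ}: the vanishing of $\hat\mu_{n_k}$ on bounded sets kills the low tower levels, Kac/Birkhoff kills the $S'$-excursions, and $l/N_0$ kills the end-segments. The genuine gap is in the counting step, precisely at the point the paper isolates as the collection $\mathcal{C}_2$: orbits whose window $[0,n_k)$ terminates \emph{inside} a long $S^*$-excursion. For such an $x$ the final excursion starts at some $m_s<n_k$ but its label $J_s$ can be any element of $S^*$ with $\tau(J_s)\ge n_k-m_s$, so the number of admissible labels is $\sum_{\tau\ge n_k-m_s}S^*_\tau$, which under (L2) alone is a divergent series; your bound $\prod_i S^*_{\tau(J_i)}\le e^{h\sum_i\tau(J_i)}\le e^{hn_k}$ tacitly assumes every excursion closes inside $[0,n_k)$, so that the subinterval determines $\tau(J_i)$ and $\sum_i\tau(J_i)\le n_k$. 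This case is not exceptional: in the escape regime a positive fraction of the mass sits at arbitrarily high tower levels at time $n_k$, i.e.\ deep inside a single enormous excursion, so it cannot be absorbed into the $\eta n_k$ non-bulk times or discarded as a small-measure set (Kac controls only $S'$, not $S^*$).

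This is also why the fixed-length count demanded by Katok's entropy formula is the wrong tool here. The paper instead bounds $h_\mu(f)$ by Bowen's topological entropy of the non-compact set $Z$ (Lemma~\ref{entropybound1}), whose defining covers may use Bowen balls of \emph{different} lengths: the sets with a truncated final excursion are covered by $(m_s+\tau_s,\epsilon)$-balls, and the weight $e^{-\lambda(m_s+\tau_s)}$ with $\lambda>h$ makes $\sum_{\tau_s}e^{h\tau_s}e^{-\lambda(m_s+\tau_s)}$ converge — exactly the geometric-series cancellation that rescues the divergent label count. To repair your argument you would either have to switch to this variable-length framework, or find some additional input (not provided by (L1)--(L2)) bounding the number of $\epsilon$-distinguishable initial segments $\bigl(f^0(J),\dots,f^{c}(J)\bigr)$ over all $J\in S^*$ with $\tau(J)>c$.
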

The proof of the Proposition follows from Lemmas~\ref{entropybound1} and \ref{entropybound2}. But in order to prove these Lemmas, one needs the following:

\begin{lem}\label{escaping-mu-mass}
Let
$$
\hat{E}_N:=\Big\{(x,k)\colon x\in W\ \mbox{ and }\ 0\le k \le N\Big\}$$
and
$$\hat{F}':=\Big\{(x, k)\colon x\in\bigcup_{J\in S'}J \ \mbox{ and }\ 0\le k\le \tau(J)-1\Big\}. 
$$
Suppose $\frac{d\hat{\mu}_{n_k}}{d\bar{\mu}_0}|_{\hat{E}}\stackrel{weakly}\to 0$  over all bounded set $\hat{E}$, then 
$\displaystyle{\lim_{k\to \infty}\hat{\mu}_{n_k}(\hat{E}_N\cup\hat{F}')=0}$ for all $N$.
\end{lem}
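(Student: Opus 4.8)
The plan is to bound the $\hat\mu_{n_k}$-mass of the set $\hat E_N\cup\hat F'$ by splitting it into the ``low-level'' part $\hat E_N$ and the ``first-return column'' part $\hat F'$, and to show each contributes a vanishing amount along the subsequence $n_k$.

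First I would deal with $\hat E_N$. Since $\hat E_N$ is a bounded set (every point $(x,k)\in\hat E_N$ has $k\le N$), the hypothesis gives $\frac{d\hat\mu_{n_k}}{d\bar\mu_0}\big|_{\hat E_N}\to 0$ weakly, and since these Radon–Nikodym densities are uniformly bounded by $1$ and $\bar\mu_0$ is a probability measure, dominated convergence yields $\hat\mu_{n_k}(\hat E_N)=\int_{\hat E_N}\frac{d\hat\mu_{n_k}}{d\bar\mu_0}\,d\bar\mu_0\to 0$. So the only real content is the part of $\hat F'$ sitting above level $N$, namely $\hat F'\setminus\hat E_N=\{(x,k):x\in\bigcup_{J\in S'}J,\ k>N\}$.

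For this part I would exploit the defining feature of $S'$: for $J\in S'$ the inducing time is the \emph{first} return time to $W$, so the columns $\{(x,k):x\in J,\ 0\le k\le \tau(J)-1\}$ over different $J\in S'$ project under the orbit map to \emph{pairwise disjoint} pieces of the space $X$, and their union (together with $W$ itself) is swept out injectively by the $f$-orbit. Concretely, each push-forward $\hat f^j_*\hat\mu_0$ assigns to $\hat F'$ an amount controlled by $\mu$ of a disjoint union of $f$-iterates of the sets $J\in S'$; summing the Cesàro average and using that these are disjoint subsets of $X$ on which $\mu$ is a probability measure, the total mass $\hat\mu_{n_k}(\hat F'\setminus\hat E_N)$ is bounded, uniformly in $k$, by a tail quantity of the form $\sum_{J\in S',\ \tau(J)>N}(\tau(J)-N)\,\mu(J)$ — or more precisely by $\bar\mu_0\big(\{x\in\bigcup_{J\in S'}J:\tau(J)>N\}\big)$ scaled appropriately — which tends to $0$ as $N\to\infty$ because $\tau$ is integrable here (each point of $\bigcup_{J\in S'}J$ lies in only one column, so $\sum_{J\in S'}\tau(J)\mu(J)\le$ a finite constant by the first-return disjointness, Kac-type bound). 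This step is essentially the argument in \cite[Theorem 1.1]{Zwe05}, adapted to our tower.

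The main obstacle is the bookkeeping in the second step: one has to be careful that $\hat F'$ is defined using only $S'$ (the first-return elements) precisely so that the orbit segments $f^0(J),\dots,f^{\tau(J)-1}(J)$ for $J\in S'$ are mutually disjoint in $X$ — this is exactly what fails for $S^*$ and is why the theorem needs the separate counting hypothesis (L2) for $S^*_n$. So I would make the disjointness explicit first, then the measure estimate, then take $N$ large followed by $k\to\infty$. Finally, combining the two pieces, $\hat\mu_{n_k}(\hat E_N\cup\hat F')\le\hat\mu_{n_k}(\hat E_N)+\hat\mu_{n_k}(\hat F'\setminus\hat E_N)$, and letting $k\to\infty$ gives $\limsup_k\hat\mu_{n_k}(\hat E_N\cup\hat F')\le\varepsilon(N)$ with $\varepsilon(N)\to0$; since the left-hand side is independent of the order of limits (for each fixed $N$ the first term already vanishes), we get $\lim_k\hat\mu_{n_k}(\hat E_N\cup\hat F')=0$ for every fixed $N$, as claimed.
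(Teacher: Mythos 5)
Your proposal is correct and follows essentially the same route as the paper: the bounded-set hypothesis disposes of $\hat E_N$, and the tail of $\hat F'$ is controlled by the density bound $\frac{d\hat\mu_{n_k}}{d\bar\mu_0}\le 1$ together with Kac's formula for the first-return elements $S'$, which gives $\sum_{n>N}\sum_{J\in S'(n)}n\mu(J)\to 0$. The only difference is presentational: the paper closes the argument by contradiction (assuming $\hat\mu_{n_k}(\hat F')\to c>0$ and choosing $N$ with tail $\le c/2$), whereas you run a direct two-cutoff $\limsup$ argument — just make sure the auxiliary cutoff used to truncate $\hat F'$ is a second parameter $M$ sent to infinity after $k$, independently of the fixed $N$ in the statement.
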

\begin{proof}
The sets $\hat{E}_N$ are bounded, therefore
$\displaystyle{\lim_{k\to \infty}\hat{\mu}_{n_k}(\hat{E}_N)= 0}$.

Since for any $J\in S'(n)$ the first return time is $n$, Kac's formula yields 
$$   
\sum_{n\ge 1}\sum_{J\in S'(n)}n\mu(J)
\le 1<\infty
$$
and thus
$$
\lim_{N\to \infty}\sum_{n> N}\sum_{J\in S'(n)}n\mu(J)=0. 
$$
Also, the fact that $\frac{d\hat{\mu}_{n_k}}{d\bar{\mu}_0}\le 1$ implies
$$
\begin{aligned} 
\hat{\mu}_{n_k}(\hat{F}'\setminus \hat{E}_N)&=\hat{\mu}_{n_k}\Bigl(\bigcup_{n>N}\bigcup_{J\in S'(n)}\bigcup_{k=N}^{n-1}(J\times\{k\})\Bigr) \\
&\le\bar{\mu}_0\Bigl(\bigcup_{n>N}\bigcup_{J\in S'(n)}\bigcup_{k=N}^{n-1}(J \times\{k\})\Bigr)\le\sum_{n> N}\sum_{J\in S'(n)}n \mu(J). 
\end{aligned}
$$
The proof now follows by contradiction: assume $\displaystyle{\lim_{k\to\infty}\hat{\mu}_{n_k}(\hat{F}')=c >0}$. Choose $N\in\N$ so large that 
$$
\sum_{n> N}\sum_{J\in S'(n)}n \mu(J)\le \frac{c}2.
$$
Therefore $\hat{\mu}_{n_k}(\hat{F}'\cap \hat{E}_N)\ge \frac{c}2$ for all $n_k$, contradicting the fact that $\displaystyle{\lim_{k\to \infty}\hat{\mu}_{n_k}(\hat{E}_N)= 0}$ for any $N$. 
\end{proof}

\begin{lem}\label{bigZ}
Consider $A_{n}^N(x): W\to \R$ defined by
$$
A_n^N(x):=\frac1n\sharp\,\{0\le j\le n-1: \,\hat{f}^j(i(x))\in\hat{E}_N\cup \hat{F}'\}.
$$
If $\displaystyle{\lim_{k\to\infty}\frac{d\hat{\mu}_{n_k}}{d\bar{\mu}_0}=0}$ then for every $\varepsilon>0$ there exists $Z=Z(n_k, \varepsilon)\subset W$ \emph{independent of $N$} such that $\mu(Z)\ge (1-\varepsilon)\mu(W)$ and
$\lim_{k\to\infty}A_{n_k}^N(x)=0$ uniformly on $Z$ \emph{for all $N$}.
\end{lem}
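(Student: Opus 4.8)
The plan is to deduce the statement from Egorov's theorem, applied after a single diagonal extraction of a subsequence. First I would record the elementary identity relating $A_n^N$ to the measures $\hat{\mu}_n$. Since $A_n^N(x)=\frac1n\sum_{j=0}^{n-1}\mathbb{I}_{\hat{E}_N\cup\hat{F}'}(\hat{f}^j(i(x)))$ and $\hat{\mu}_n=\frac1n\sum_{j=0}^{n-1}\hat{f}^j_*\hat{\mu}_0$ with $\hat{\mu}_0=i_*(\mu|_W)$, unwinding the definitions gives
\[
\int_W A_{n_k}^N(x)\,d\mu(x)=\hat{\mu}_{n_k}(\hat{E}_N\cup\hat{F}').
\]
By Lemma~\ref{escaping-mu-mass} (whose hypothesis is exactly the one assumed here), the right-hand side tends to $0$ as $k\to\infty$ for every fixed $N$; note that $\hat{E}_N\cup\hat{F}'$ is \emph{not} bounded, so this step genuinely uses Lemma~\ref{escaping-mu-mass} rather than the weak convergence directly. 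Hence $A_{n_k}^N\to 0$ in $L^1(\mu|_W)$ for each $N$.

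Since $L^1$-convergence alone does not give control uniform on a large set, the next step is to pass to a subsequence, once and for all, valid simultaneously for every $N$. Using that $\hat{\mu}_{n_k}(\hat{E}_N\cup\hat{F}')\to0$ as $k\to\infty$ for each $N$, choose inductively $k_1<k_2<\cdots$ so that $\hat{\mu}_{n_{k_j}}(\hat{E}_N\cup\hat{F}')\le 2^{-j}$ whenever $N\le j$. Then for each fixed $N$ one has $\sum_j\int_W A_{n_{k_j}}^N\,d\mu<\infty$, hence $\sum_j A_{n_{k_j}}^N(x)<\infty$ for $\mu$-a.e.\ $x\in W$, so in particular $A_{n_{k_j}}^N\to0$ $\mu$-a.e.\ on $W$ for every $N$ simultaneously. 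Replacing $(n_k)$ by this subsequence is harmless, since the lemma is only used inside Proposition~\ref{smallentropy}, whose conclusion $h_\mu(f)\le h$ does not refer to the sequence.

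Finally, for each $N\ge1$ apply Egorov's theorem on the finite measure space $(W,\mu|_W)$ to the sequence $j\mapsto A_{n_{k_j}}^N$: there is $Z_N\subset W$ with $\mu(W\setminus Z_N)<2^{-N}\varepsilon\,\mu(W)$ on which $A_{n_{k_j}}^N\to0$ uniformly. Put $Z:=\bigcap_{N\ge1}Z_N$. Then $\mu(W\setminus Z)\le\sum_{N\ge1}\mu(W\setminus Z_N)<\varepsilon\,\mu(W)$, so $\mu(Z)\ge(1-\varepsilon)\mu(W)$, and since $Z\subset Z_N$ for all $N$ we get $A_{n_{k_j}}^N\to0$ uniformly on $Z$ for every $N$; by construction $Z$ depends only on the (sub)sequence and on $\varepsilon$, not on $N$, as required.

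I expect the main obstacle to be precisely this interplay: the $L^1$-smallness of $A_{n_k}^N$ for each individual $N$ must be upgraded to uniform smallness on one fixed set $Z$ that works for the whole countable family $\{A_{\,\cdot}^N\}_{N\ge1}$, which is what forces the diagonal subsequence extraction together with the geometric error budget $2^{-N}\varepsilon$ in the Egorov step. The Fubini-type identity, the invocation of Lemma~\ref{escaping-mu-mass}, and the application of Egorov are all routine.
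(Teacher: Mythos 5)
Your proof is correct and follows essentially the same strategy as the paper's: reduce to $\int_W A_{n_k}^N\,d\mu=\hat{\mu}_{n_k}(\hat{E}_N\cup\hat{F}')\to 0$ via Lemma~\ref{escaping-mu-mass}, pass to a diagonal subsequence, and upgrade to uniform convergence on a set of measure $\ge(1-\varepsilon)\mu(W)$ by an Egorov-type argument with a geometric error budget over $N$. The only (cosmetic) difference is that you perform one quantitative subsequence extraction up front and then apply Egorov for each $N$ on that fixed subsequence, whereas the paper interleaves the subsequence refinements and the uniformization step $N$ by $N$; both yield a set $Z$ depending only on the subsequence and $\varepsilon$, as required.
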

\begin{proof}
By formula (10) of \cite{PesZha07} $\lim_{k\to\infty}\hat{\mu}_{n_k}(\hat{E}_N\cup\hat{F}')=0$ implies $\lim_{k\to\infty}\int_WA_{n_k}^N(x) \,d\mu(x)= 0$. For $N=1$, by passing to a subsequence if necessary, there exists a set $Z_1$ with $\mu(Z_1)\ge(1-\frac{\varepsilon}{2})\mu(W)$ and $A^1_{n_k}(x)\to 0$ uniformly on $Z_1$. Since
$\lim_{k\to\infty}\int_{Z_1}A^2_{n_k}(x)\,d\mu(x)=0$, again by passing to a proper subsequence, there exists $Z_2\subset Z_1$ with $\mu(Z_2)\ge (1-\frac{\varepsilon}{2}-\frac{\varepsilon}{4})\mu(W)$ and
$\lim_{k\to\infty}A^2_{n_k}(x)=0$ uniformly on $E_2$. By a diagonal argument, there exists a subsequence, still denoted by $n_k$, and a set 
$Z_N\subset W$ with 
$$
\mu(Z_N)\ge (1-\sum_{j=0}^N\frac{\varepsilon}{2^j})\mu(W)
$$
such that $\lim_{k\to\infty}A^N_{n_k}(x)=0$ uniformly on $Z_N$.
Let $Z=\bigcap_{k\ge 1}Z_k$. We have that
$\mu(Z)\ge(1-\varepsilon)\mu(W)$ and $A^N_{n_k}(x)\to 0$ uniformly on $Z$ for every fixed $N$.
\end{proof}
The claim now follows from
\begin{equation}\label{eq:ent-est}
h_\mu(f) \le h_{\text{top}}(f,Z) \le h
\end{equation}
where $h_{\text{top}}(f,Z)$ is the topological entropy of the map $f|Z$ restricted to the non-compact set $Z$. For the reader's convenience, 
we briefly recall the definition of this notion of topological entropy from \cite{Bow73} as presented in \cite[Section 11]{Pes97} before proving the inequalities in Lemmas~\ref{entropybound1} and \ref{entropybound2}.

The set $B_n(x,\epsilon):=\{y\in Y: d(f^ix,f^iy)<\epsilon, \, 0\le i\le n-1\}$ for 
$x\in Y$, $n>0$ and $\epsilon>0$ is called a \emph{Bowen ball}. Given a set $E\subset Y$ and numbers $\lambda \in \R,\ \epsilon>0$ and $K\in\N$, let
$$
M(E,\lambda,\epsilon,K):=\inf_{{\mathcal G}}\Big\{\sum_{B_{n}(x,\epsilon)\in {\mathcal G}}e^{-\lambda n}\Big\},
$$
where the infimum is taken over all covers ${\mathcal G}$ of $E$ whose elements are Bowen balls $B_{n}(x,\epsilon)$ with $n\ge K$. We stress that different balls $B_n(x,\epsilon)$ in the cover $\mathcal{G}$ may have different length $n$.
Then write
$$
m(E,\lambda,\epsilon):=\lim_{K\to \infty}M(E,\lambda, \epsilon,K)
$$
and let
$$
h_{top}(f, E, \epsilon):=\inf_{\lambda\in\R}\ \big\{ m(E,\lambda,\epsilon)=0\big\}
$$
Finally define
$$
h_{top}(f, E):= \lim_{\epsilon\to 0}\ h_{top}(f, E, \epsilon).
$$
\begin{lem}\label{entropybound1}
Let $\mu\in\mathcal{M}(f, X)$ with $\mu(E)>0$. Then 
$h_\mu(f)\le h_{\text{top}}(f,E)$.
\end{lem}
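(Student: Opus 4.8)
The plan is to compare the measure-theoretic entropy $h_\mu(f)$ with the Bowen-type (dimensional) topological entropy $h_{\text{top}}(f, E)$ via a standard Katok-type covering argument, exploiting that $\mu(E)>0$. First I would recall the local form of the Shannon–McMillan–Breiman theorem: for any finite measurable partition $\xi$ of $X$ refining into small diameter sets, for $\mu$-a.e. $x$ one has $-\frac1n\log\mu(\xi_0^{n-1}(x)) \to h_\mu(f,\xi)$, where $\xi_0^{n-1} = \bigvee_{i=0}^{n-1} f^{-i}\xi$ and $\xi_0^{n-1}(x)$ is the element containing $x$. Given $\delta>0$ choose $\xi$ with $h_\mu(f,\xi) > h_\mu(f) - \delta$ and $\mathrm{diam}\,\xi$ small enough that each element of $\xi_0^{n-1}(x)$ is contained in a Bowen ball $B_n(x,\epsilon)$ (this uses uniform continuity of $f$, so elements of the dynamical refinement shrink in the Bowen metric).

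Next, fix $\epsilon>0$ and $\lambda < h_\mu(f) - 2\delta$; I want to show $m(E,\lambda,\epsilon)>0$, which forces $h_{\text{top}}(f,E,\epsilon) \ge \lambda$, and then letting $\delta\to 0$, $\epsilon\to 0$ gives the claim. By Egorov's theorem applied to the SMB convergence, there is a set $E'\subset X$ with $\mu(E')$ close to $1$ (in particular $\mu(E'\cap E)>0$, since $\mu(E)>0$) and an integer $K_0$ such that for all $x\in E'$ and all $n\ge K_0$, $\mu(\xi_0^{n-1}(x)) \le e^{-n(h_\mu(f)-\delta)}$. Now take any cover $\mathcal{G}$ of $E\cap E'$ by Bowen balls $B_{n}(x_j,\epsilon)$ with $n \ge K \ge K_0$. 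Each such Bowen ball, because of the diameter choice on $\xi$, meets at most a bounded number (independent of $n$, depending only on $\epsilon$ and $\xi$) of elements of $\xi_0^{n-1}$; pick for each $j$ one such element $P_j$ with $P_j\cap E'\cap B_n(x_j,\epsilon)\ne\emptyset$ — actually more carefully, the union of the elements of $\xi_0^{n_j-1}$ meeting $B_{n_j}(x_j,\epsilon)$ covers $B_{n_j}(x_j,\epsilon)$, so
$$
\mu(E\cap E') \le \sum_{B_{n_j}(x_j,\epsilon)\in\mathcal{G}} \mu\big(B_{n_j}(x_j,\epsilon)\cap E'\big) \le C(\epsilon,\xi)\sum_j e^{-n_j(h_\mu(f)-\delta)} \le C(\epsilon,\xi)\sum_j e^{-\lambda n_j}.
$$
Hence $M(E\cap E',\lambda,\epsilon,K) \ge \mu(E\cap E')/C(\epsilon,\xi) > 0$ for every $K\ge K_0$, so $m(E,\lambda,\epsilon) \ge m(E\cap E',\lambda,\epsilon) > 0$, giving $h_{\text{top}}(f,E,\epsilon)\ge\lambda$. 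Taking $\lambda \uparrow h_\mu(f)-2\delta$, then $\epsilon\to 0$ and $\delta\to 0$ yields $h_{\text{top}}(f,E) \ge h_\mu(f)$.

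The main obstacle is the bounded-multiplicity estimate: controlling how many elements of the dynamical refinement $\xi_0^{n-1}$ can intersect a single $n$-Bowen ball $B_n(x,\epsilon)$, uniformly in $n$. This is where the diameter of $\xi$ must be chosen small relative to $\epsilon$, and one has to be slightly careful because partition elements need not be open; the standard fix is to also discard a small-measure neighbourhood of the boundary $\partial\xi$ (choosing $\xi$ so that $\mu(\partial\xi)=0$ and thickening), or equivalently to use the fact that a Bowen ball of radius $\epsilon$ is contained in a single element of a partition of diameter $\epsilon' \gg \epsilon$ in the appropriate refinement sense. Modulo this technical point — which is classical and carried out in \cite[Section 11]{Pes97} — the argument is routine; indeed Lemma~\ref{entropybound1} is essentially the easy ("$\ge$") half of the variational principle for the Bowen–Pesin–Pitskel entropy restricted to a set of positive measure, and I would simply cite that framework while spelling out the covering step as above.
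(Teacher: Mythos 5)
Your argument is correct in outline but takes a genuinely different route from the paper. The paper's proof is two lines: for $\mu(E)=1$ it simply invokes the inverse variational principle $h_\mu(f)=\inf\{h_{top}(f,E)\colon \mu(E)=1\}$ from \cite{Pes97}, and for $0<\mu(E)<1$ it uses ergodicity of $\mu$ to note that the invariant set $\bigcup_{i\ge 0}f^i(E)$ has full measure, and then countable stability of $h_{top}(f,\cdot)$ together with $h_{top}(f,f(E))\le h_{top}(f,E)$ to get $h_\mu(f)\le h_{top}(f,\bigcup_i f^i(E))=h_{top}(f,E)$. You instead reprove the relevant half of the inverse variational principle directly (SMB plus Egorov plus a Bowen-ball covering count), which is self-contained and handles $\mu(E)>0$ in one stroke by intersecting $E$ with the Egorov set rather than saturating $E$ under $f$. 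The price is the multiplicity estimate, which is exactly where your written argument breaks.

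Concretely: the claim in your displayed chain of inequalities -- that $B_{n}(x,\epsilon)$ meets at most $C(\epsilon,\xi)$ elements of $\xi_0^{n-1}$ with $C$ independent of $n$ -- is false in general, no matter how small $\mathrm{diam}\,\xi$ is relative to $\epsilon$: each ball $B(f^ix,\epsilon)$ may meet up to $m=m(\epsilon,\xi)$ elements of $\xi$, so the naive bound on the multiplicity is $m^{n}$, exponential in $n$. You do identify the standard repair (Katok's boundary trick: choose $\xi$ with $\mu\bigl(U_{2\epsilon}(\partial\xi)\bigr)<\rho$, use Birkhoff/Egorov so that points of $E'$ visit $U_{2\epsilon}(\partial\xi)$ with frequency at most $2\rho$, whence the multiplicity is at most $(\#\xi)^{2\rho n}\binom{n}{\lceil 2\rho n\rceil}\le e^{\gamma n}$ with $\gamma$ small), but then the displayed inequality must be rewritten with $e^{\gamma n_j}$ in place of $C(\epsilon,\xi)$ and $\gamma$ absorbed into the $\delta$-loss before concluding $m(E,\lambda,\epsilon)>0$; note also that the balls in $\mathcal G$ are centered at points $x_j$ not necessarily in $E'$, so the frequency argument has to be applied at some $y\in B_{n_j}(x_j,\epsilon)\cap E'$ with radius $2\epsilon$. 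Since you flagged this as the main obstacle and the repair is classical, I would count the proposal as essentially sound, but as literally written the key estimate does not hold; given that the paper's own proof is a one-line reduction to cited properties of $h_{top}(f,\cdot)$, you should either carry out the boundary trick in full or follow the paper and quote the inverse variational principle.
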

\begin{proof}
If $\mu(E)=1$, the statement follows from the inverse variational principle (see \cite{Pes97}):
$$
h_\mu(f)=\inf\{h_{top}(f,E):\mu(E)=1\}.
$$
If $0<\mu(E)<1$, the set  $\bigcup_{i=0}^\infty f^i(E)$ is invariant and thus has full measure and
$$
h_{top}\left(f,\ \bigcup_{i=0}^\infty f^i(E)\right)
=\sup_{i\ge 0}h_{top}\left(f, f^i(E)\right)=h_{top}(f, E).
$$
The last equality is due to the fact that
$h_{top}\left(f, f(E)\right)\le h_{top}(f, E)$.
\end{proof}
The first inequality in \eqref{eq:ent-est} now follows from Lemmas~\ref{bigZ} and \ref{entropybound1}. It remains to prove the second inequality in \eqref{eq:ent-est}
\begin{lem}\label{entropybound2}
$h_{top}(f,Z)\le h$
\end{lem}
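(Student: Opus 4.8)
The plan is to cover $Z$ efficiently by Bowen balls for each scale $\epsilon$, exploiting that a point of $Z$ spends almost all of its time \emph{deep inside long $S^{*}$-excursions}, where Condition (L1) confines its orbit to sets of diameter $<\epsilon$ and where Condition (L2) bounds the number of available itineraries by $e^{hn}$. Recall that if $\hat f^{\,j}(i(x))=(F^{m}(x),k)$ with $F^{m}(x)\in J^{(m)}\in S$, then $f^{\,j}(x)=f^{k}(F^{m}(x))\in f^{k}(J^{(m)})$, so controlling the orbit of $x$ under $f$ amounts to following its excursions through the tower.

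Fix $\epsilon>0$ and take $l=l(\epsilon)$, $N=N(\epsilon)$ as in (L1), enlarging $N$ if necessary so that $N\ge l$ and so that $S^{*}_{m}\le e^{hm}$ for all $m>N$ (possible by (L2)); let $D=D(\epsilon)<\infty$ be the minimal number of $\epsilon$-balls needed to cover the compact space $X$. For $x\in Z$ and a length $n$, decompose $\{\hat f^{\,j}(i(x))\}_{0\le j<n}$ into successive excursions; call an excursion \emph{long} if it is through some $J\in S^{*}$ with $\tau(J)>N$, and \emph{short} otherwise (i.e. through $S'$ with any inducing time, or through $S^{*}$ with inducing time $\le N$); call step $j$ \emph{good} if at time $j$ the orbit sits in a long excursion at a level $k$ with $l/2\le k\le\tau(J)-l/2-1$, and \emph{bad} otherwise. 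By Lemma~\ref{bigZ} (applied with this $N$) one may choose $n=n_{k}$, with $k$ large depending on a small parameter $\eta>0$, so that $A_{n}^{N}(x)<\eta$ for all $x\in Z$. Since every step of a short excursion, and the first $N+1$ levels of every long excursion, lie in $\hat E_{N}\cup\hat F'$, it follows that the number $p$ of long excursions met by $x$ satisfies $p(N+1)<\eta n$, that the total number of bad steps is $<\eta n+pl<\eta n\,(1+l/(N+1))$, and that the number of maximal runs of good/bad steps is at most $2p+1<3\eta n/(N+1)$.

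Now encode $x$ by: (i) the run structure, i.e. the lengths of the $\le 2p+1$ runs and which of them are good; (ii) for each good run, the element $J\in S^{*}$ producing it — here $\tau(J)$ equals (length of the run)$+l$, hence is determined by (i), so by (L2) there are at most $S^{*}_{\tau(J)}\le e^{h\tau(J)}$ choices; (iii) for each bad step $j$, an $\epsilon$-ball of $X$ containing $f^{\,j}(x)$. By (L1) on good steps and by the choice of $\epsilon$-balls on bad steps, two points of $Z$ with the same encoding satisfy $d(f^{\,j}x,f^{\,j}x')<2\epsilon$ for $0\le j<n$, so $Z$ is covered by at most $\mathcal N(n)$ Bowen balls $B_{n}(\cdot,2\epsilon)$, where $\mathcal N(n)$ is the number of encodings. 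Using $\binom{n}{q}\le(en/q)^{q}$ with $q\le 3\eta n/(N+1)$, part (i) contributes at most $e^{\,o(n)+c(\eta)n}$ with $c(\eta)\to0$ as $\eta\to0$ (for fixed $N$); part (ii) contributes $\prod_{\text{good runs}}e^{h\tau(J)}\le e^{h(n+pl)}\le e^{hn(1+\eta l/(N+1))}$; part (iii) contributes at most $D^{\#\mathrm{bad}}\le e^{\eta n(1+l/(N+1))\log D}$. Hence $\tfrac1n\log\mathcal N(n)\le h+g(\eta)$ with $g(\eta)\to0$ as $\eta\to0$. Given $\lambda>h$ one fixes $\eta$ with $h+g(\eta)<\lambda$; then for every $K$ and every large $n=n_{k}\ge K$ this cover has total weight $\mathcal N(n)e^{-\lambda n}\le e^{(h+g(\eta)-\lambda)n}\to 0$, whence $m(Z,\lambda,2\epsilon)=0$ and $h_{\text{top}}(f,Z,2\epsilon)\le\lambda$. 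Letting $\lambda\downarrow h$ and then $\epsilon\to0$ gives $h_{\text{top}}(f,Z)\le h$.

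\noindent\textbf{Main obstacle.} The difficulty — and the one genuinely new bookkeeping point — is the \emph{unboundedness} of the inducing time, which surfaces at the right end of the window: the last excursion met by $x$ in $[0,n)$ may be a long $S^{*}$-excursion that is cut off, so its (possibly huge) inducing time is not determined by the run structure and cannot be recorded as in (ii) by a convergent count. The remedy is to use Bowen balls of \emph{varying} length (which the dimension-theoretic definition of $h_{\text{top}}(f,\cdot)$ permits): for such an $x$ enlarge the window to the end of that excursion, record its $J$ as in (ii), and note that since $S^{*}_{m}\le e^{hm}$ and $\lambda>h$ the series $\sum_{m}S^{*}_{m}e^{-\lambda m}$ converges, so this adds only about $e^{(h+g(\eta)-\lambda)\cdot(\text{excursion start})}\sum_{m}e^{(h-\lambda)m}$ to the total weight, still negligible. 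A cut-off \emph{short} last excursion is harmless: if it is through $S^{*}$ its seen length is $\le N$, and if it is through $S'$ its seen length is $<\eta n$ (being part of the $\hat F'$-time), so either way it is absorbed into the bad-step count. One also checks, routinely, that it is enough to produce, for each $K$, a single cover by balls of bounded length $\ge K$ with small weight, since $M(Z,\lambda,2\epsilon,K)$ is the infimum over all such covers.
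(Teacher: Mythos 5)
Your proof is correct and follows essentially the same route as the paper's: a partial coding of orbits by their long $S^*$-excursions (the paper's ``designations''), single $\epsilon$-balls for the deep-excursion iterates via (L1), a binomial/Stirling count of the sparse ``bad'' positions controlled by Lemma~\ref{bigZ}, the $e^{h\tau}$ count from (L2), and variable-length Bowen balls with the convergent series $\sum_m S^*_m e^{-\lambda m}$ to handle the cut-off final excursion (the paper's $\mathcal{C}_2$). The differences are only in bookkeeping conventions, not in substance.
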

The strategy is to obtain an appropriate upper bound estimate of $M(Z,\lambda,\epsilon,n_k)$ for a particular covering.

Given $\epsilon>0$, choose a sufficiently large 
$N>N(\epsilon)$ such that \eqref{eq:element-count} holds for all $n>N$. To simplify notation write $\tau_i=\tau(J_i)$. 

A \emph{designation} is defined as a pair of $(s+1)$-tuples 
$$
(\xi, \eta)=(\xi = (m_0,\dots, m_s), \eta = (J_0, \dots, J_s))
$$ 
satisfying the following conditions: 
\begin{itemize}
\item $0\le m_0 < m_1< \dots < m_s \in \N$. 
\item For each $0\le j \le s$, $J_j\in S^*$ and $\tau_j >N$. 
\item For each $0\le j \le s-1$, we have $m_j+\tau_j\le m_{j+1}$. 
\end{itemize}
A designation $(\xi, \eta)$ defines a subset $V(\xi, \eta)$ of $X$ by designating the $m_j$-th iterate of $x$ to be contained in $J_j$, i.e.,
$$ 
V(\xi,\eta)=\{x\in X: f^{m_j}(x)\in J_j \ \mbox{ for all } \ 0\le j\le s\}. 
$$
This can be viewed as a \emph{partial coding} of the dynamics using \emph{only} sets from $S^*$ with inducing time larger than $N$. 

We say that $0\le i\le m_s+\tau_s$ is \emph{designated} if there exists $0\le j< s$ satisfying $m_j\le i< m_j+\tau_j$. Otherwise, it is called \emph{undesignated}. The total number of undesignated indices from $0$ to $m_s$ is 
$$
m_0 + \sum_{j=0}^{s-1}\left( m_{j+1} - m_j - \tau_j\right) = m_s - \sum_{j=0}^{s-1}\tau_j. 
$$
For a small $\delta>0$ let
${\mathcal C}:={\mathcal C}(\epsilon, N,\delta, n_k)$ denote the set of all designations $(\xi, \eta)$ satisfying 
\begin{equation}\label{eq:J-i}
m_{s-1} + \tau_{s-1} <n_k\le m_s + \tau_s.
\end{equation}
and the total undesignated space is small:
\begin{equation}\label{eq:J-i1}
m_s - \sum_{j=0}^{s-1}\tau_j<\delta n_k.
\end{equation}

\begin{lem}\label{Z-covering}
For $k\in\N$ sufficiently large one has
$$
Z\subset \bigcup_{(\xi, \eta)\in\mathcal{C}}V(\xi, \eta).
$$
\end{lem}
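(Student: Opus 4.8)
The plan is to produce, for each $x\in Z$, an explicit designation $(\xi,\eta)\in\mathcal{C}$ with $x\in V(\xi,\eta)$, provided $k$ is large enough. First I would fix the parameters $\epsilon$, $N>N(\epsilon)$ and $\delta>0$ entering the definition of $\mathcal{C}$, and invoke Lemma~\ref{bigZ} to choose $k$ so large that $A_{n_k}^N(x)<\delta$ \emph{simultaneously} for all $x\in Z$ (this is precisely the uniform convergence supplied by that lemma). Now fix $x\in Z$ and follow the $\hat{f}$-orbit $(x,0),\hat{f}(x,0),\dots$ up to time $n_k$. Cutting $[0,n_k)$ at the successive returns of this orbit to the base $W\times\{0\}$ partitions it into consecutive \emph{excursion blocks}: as $m$ ranges over the $i$-th block, $\hat{f}^m(x,0)$ runs through $(F^i(x),0),\dots,(F^i(x),\tau(J^{(i)})-1)$, where $F^i(x)\in J^{(i)}\in S$ and the last block is possibly truncated at $n_k$. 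Call a block \emph{long} if $J^{(i)}\in S^*$ and $\tau(J^{(i)})>N$, and \emph{short} otherwise.

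The designation is then read off by recording the long blocks only: let $m_0<m_1<\dots<m_s$ be their starting times and $J_0,\dots,J_s\in S^*$ the corresponding elements, where $m_s$ is taken as the start of the long block meeting the endpoint $n_k$, so that $m_{s-1}+\tau_{s-1}<n_k\le m_s+\tau_s$, which is \eqref{eq:J-i}. Conditions (i)--(iii) in the definition of a designation are immediate from the block structure --- the blocks are pairwise disjoint and ordered, and each $J_j$ was chosen in $S^*$ with $\tau_j>N$ --- and since $m_j$ is the $f$-iteration count reaching the corresponding return, $f^{m_j}(x)=F^{i_j}(x)\in J_j$, giving $x\in V(\xi,\eta)$. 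For \eqref{eq:J-i1} I would note that the undesignated indices in $[0,m_s)$ are exactly those lying in short blocks, and that, as observed just before the lemma, every point of the orbit inside a short block belongs to $\hat{E}_N\cup\hat{F}'$: a short block over $J\in S'$ is contained in $\hat{F}'$, while a short block over $J\in S^*$ with $\tau(J)\le N$ has all its levels $\le N-1<N$ and so is contained in $\hat{E}_N$. Hence the number of undesignated indices is at most $n_k A_{n_k}^N(x)<\delta n_k$, which is \eqref{eq:J-i1}. Since $x\in Z$ with $k$ large forces $A_{n_k}^N(x)<\delta<1$, the segment $[0,n_k)$ cannot be covered entirely by short blocks, so at least one long block exists and the construction is non-vacuous.

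The step I expect to be the main obstacle is the bookkeeping at the right endpoint: one must guarantee that the ``last'' long block straddling $n_k$ exists in the precise sense demanded by \eqref{eq:J-i}, i.e. that the orbit does not spend the trailing part of $[0,n_k)$ inside short blocks for more than a $\delta$-fraction of the time. This is again exactly what $A_{n_k}^N(x)<\delta$ buys: the maximal trailing run of $[0,n_k)$ consisting of short blocks has length $<\delta n_k$, so the last long block ends within $\delta n_k$ of $n_k$ and any residual short stretch is swallowed by the (still $<\delta n_k$) count of undesignated indices. I would also flag, to keep the logic of the section transparent, that Condition (L1) is \emph{not} used in this lemma --- it enters only afterwards, when each $V(\xi,\eta)$ is covered by Bowen balls and one uses that on the middle levels $\tfrac{l}{2},\dots,\tau_j-\tfrac{l}{2}-1$ of every long block the $f$-orbit stays in a set of diameter $<\epsilon$ --- and that Condition (L2) is likewise not needed here, being reserved for counting the designations in $\mathcal{C}$ via $S^*_n\le e^{hn}$ in the subsequent estimate of $M(Z,\lambda,\epsilon,n_k)$.
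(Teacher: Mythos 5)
Your construction is the natural unpacking of the paper's two-sentence proof (which simply asserts that $A_{n_k}^N(x)\le\delta$ forces $x\in V(\xi,\eta)$ for some $(\xi,\eta)\in\mathcal{C}$ and then invokes Lemma~\ref{bigZ}), and in the case where $n_k$ actually lands inside a long block (the paper's $\mathcal{C}_2$ case, $m_s\le n_k<m_s+\tau_s$) your verification is correct: the undesignated indices in $[0,m_s)$ are precisely the short-block indices there, each of which is counted by $A_{n_k}^N(x)$, so \eqref{eq:J-i1} follows, and \eqref{eq:J-i} is immediate.

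The problem is the endpoint case that you yourself flag as the main obstacle; your proposed resolution does not close it. When $n_k$ falls inside a run of short blocks, the element you must record as $J_s$ in order to satisfy $n_k\le m_s+\tau_s$ in \eqref{eq:J-i} is the first long block \emph{after} $n_k$ (this is exactly the paper's $\mathcal{C}_1$ case, where $n_k<m_s$). But $A_{n_k}^N(x)<\delta$ constrains the orbit only on $[0,n_k)$ and says nothing about how long the orbit stays in short blocks after time $n_k$; hence $m_s-n_k$, which is part of the undesignated count $m_s-\sum_{j=0}^{s-1}\tau_j$, is not bounded by $\delta n_k$ or by anything else. Your sentence ``the last long block ends within $\delta n_k$ of $n_k$'' refers to the last long block \emph{before} $n_k$, which in this case ends strictly before $n_k$ and therefore cannot serve as $J_s$; the ``residual short stretch'' you claim is swallowed is only the part of the trailing run lying in $[0,n_k)$, not the uncontrolled part $[n_k,m_s)$. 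To repair the argument one needs either control of $A_m^N(x)$ for $m$ slightly beyond $n_k$, or (more simply) a relaxation of \eqref{eq:J-i} allowing the designation to terminate up to $\delta n_k$ before $n_k$, with the trailing stretch counted as undesignated --- the covering estimate in Lemma~\ref{entropybound2} already absorbs $\delta n_k$ undesignated iterates into the $\beta(\epsilon)^{\delta n_k}$ factor, so this costs nothing there. The defect is inherited from the paper's definition of $\mathcal{C}$ rather than introduced by you, but as written your derivation of \eqref{eq:J-i} and \eqref{eq:J-i1} in the $\mathcal{C}_1$ case does not go through.
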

\begin{proof}
It follows from the definition of ${\mathcal C}$ that for any $x\in W$ with $A_{n_k}^N(x)\le\delta$ one has 
$x\in V(\xi, \eta)$ for some $(\xi, \eta)\in{\mathcal C}$. By Lemma~\ref{bigZ} $A_{n_k}^N(x)\to 0$ uniformly on $Z$ yielding the statement.
\end{proof}
\begin{proof}[Proof of Lemma~\ref{entropybound2}]
Fix $\epsilon, N, \delta$ and the sequence $n_k$. 
Write ${\mathcal C}={\mathcal C}_1\cup{\mathcal C}_2$ where
\begin{itemize}
\item $(\xi,\eta)\in{\mathcal C}_1$ if $n_k$ is undesignated, i.e., 
$m_{s-1}+\tau_{s-1}\le n_k<m_s$;
\item and $(\xi,\eta)\in{\mathcal C}_2$ if $n_k$ is designated, i.e., 
$m_s \le n_k < m_s + \tau_s$. 
\end{itemize}
We shall cover the set $Z$ with Bowen balls by covering each set 
$V(\xi,\eta)$ with $(\xi,\eta)\in\mathcal{C}_1$ and 
$(\xi,\eta)\in\mathcal{C}_2$. We begin by covering $V(\xi,\eta)$ with 
$(\xi,\eta)\in\mathcal{C}_1$. Let 
$$
\beta(\epsilon):=\min_{\mathcal{G}}\ \sharp\,\mathcal{G},
$$ 
where the minimum is taken over all coverings $\mathcal{G}$ of $X$ by 
$\epsilon$-balls $B$ (in the original metric $d$). Since the space $X$ is compact $\beta(\epsilon)$ is finite. For  $x\in V(\xi, \eta)$ for any 
$(\xi, \eta)\in{\mathcal C}$, each undesignated iterate can be covered by $\beta(\epsilon)$ number of $\epsilon$-balls. There are at most $\delta n_k$ such iterates from the definition of $\mathcal{C}$. For the remaining iterates, the hypothesis (L1) implies that for $J\in S$ with $\tau(J)>N>N(\epsilon)$ and for $j=\frac{l(\epsilon)}2, \dots,\tau(J)-\frac{l(\epsilon)}2 -1$ the sets $f^j(J)$ are contained in a single ball of radius $\epsilon$.
Given any $(\xi, \eta)\in{\mathcal C}_1$, the minimum number of $(n_k,\epsilon)$-balls needed to cover $V=V(\xi, \eta)$ is bounded by the product of the minimum  number of $\epsilon$-balls needed to cover each iterate, i.e.,
\[
	\prod_{j=0}^{n_k} \min\{\ \ell\colon\,  \bigcup_{i=1}^{\ell} B(x_i, \epsilon) \supset f^j(V)\,\}
\]
which in turn is bounded by 
\begin{equation}\label{eq:C2-mult}
\beta(\epsilon)^{\delta n_k+(1-\delta)\frac{n_k}{N}l(\epsilon)}<\beta(\epsilon)^{n_k(\delta+ \frac{l(\epsilon)}{N})}.
\end{equation}

For the collection $\mathcal{C}_2$ the situation is different: although each set $V(\xi, \eta)$ with 
$(\xi, \eta)\in{\mathcal C}_2$ can be covered by a finite number of $(n_k,\epsilon)$-balls, the total number of elements in 
$\mathcal{C}_2$ is infinite (there is no upper bound for $m_s + \tau_s$). One therefore needs to proceed differently. For any $(\xi, \eta)\in{\mathcal C}_2$ the set
$V(\xi, \eta)$ is covered by 
$(m_s + \tau_s,\epsilon)$-balls where the \emph{"length"} of each ball will depend on $\tau_s$.

By the same counting argument as in the $\mathcal{C}_1$ case presented above, for any $(\xi, \eta)\in {\mathcal C}_2$ each $V(\xi, \eta)$ can be covered by at most
\begin{equation}\label{eq:C1-mult}
\beta(\epsilon)^{n_k( \delta + \frac{l(\epsilon)}{N})}
\end{equation}
$(m_s+\tau_s,\epsilon)$-balls.

Let ${\mathcal G}={\mathcal G}_1\cup {\mathcal G}_2$, where
${\mathcal G}_1$ is the collection of $(n_k,\epsilon)$-balls covering all sets $V(\xi, \eta)$ with $(\xi, \eta)\in{\mathcal C}_1$ and ${\mathcal G}_2$ is the collection of 
$(m_s+\tau_s,\epsilon)$-balls covering all sets 
$V(\xi, \eta)$ with $(\xi, \eta)\in{\mathcal C}_2$. Then
$$
M(Z,\lambda,\epsilon,n_k)\le\sum_{B_{n_k}(x,\epsilon)\in{\mathcal G}}e^{-\lambda n_k}
=\sum_{B_{n_k}(x,\epsilon)\in{\mathcal G}_1}e^{-\lambda n_k}
+\sum_{B_{n_k}(x,\epsilon)\in{\mathcal G}_2}e^{-\lambda n_k}.
$$
The two sums in the above formula are estimated separately. All Bowen balls of the collection $\mathcal{G}_1$ have the same length $n_k$. 
Equation (\ref{eq:C2-mult}) yields
$$
\sum_{B_{n_k}(x,\epsilon)\in {\mathcal G}_1}e^{-\lambda n_k}
\le\sharp\,(\mathcal{C}_1)\ \beta(\epsilon)^{n_k (\delta  +\frac{ l(\epsilon)}{N})}\ e^{-n_k \lambda}.
$$
One therefore only needs to estimate the cardinality of 
${\mathcal C}_1$. Consider an element
$(\xi, \eta)\in\mathcal{C}_1$ with $l$ undesignated spaces, i.e.,
$$
l= m_s - \sum_{j=0}^{s-1} \tau_j  < \delta n_k
$$
where the inequality follows from Equation~\eqref{eq:J-i1}.
The elements of $\mathcal{C}_1$ are counted as follows. First partition $(0, \dots, n_k-l)$ into intervals of minimal length 
$N$, corresponding to the partition of $\{0, \cdots, \tau_0 + \cdots + \tau_{s-1}\}$ into intervals of designated iterates of length $\tau_0, \cdots, \tau_{s-1}$.  By Condition (L2) there are at most $e^{h\tau_i}$ choices of elements with $J_i\in S^*$ and $\tau_i>N$. Then insert a total of at most $l$ intervals corresponding to the undesignated space. With the convention that undefined binomial formulas are zero, one then has
$$
\begin{aligned}
\sharp\,(\mathcal{C}_1)
&\le\sum_{i=1}^{n_k}\sum_{l=1}^{\delta n_k} \sum_{s=1}^{\frac{n_k-l}{N}} \binom{l}{i-s}\binom{n_k-l}{s} e^{h(n_k-l)}\\
&\le\sum_{i=1}^{n_k}\sum_{l=1}^{\delta n_k} \sum_{s=1}^{n_k/N}\binom{l}{i-s}\binom{n_k}{s}  e^{h(n_k-l)}.
\end{aligned}
$$
By Stirling's formula (see e.g. \cite{Fel68}),
\begin{equation}\label{stirling}
\binom{n}{m}\le Ce^{\sigma(m/n)\cdot n},
\end{equation}
where $\sigma(x)=-x\log x -(1-x)\log(1-x)$. Since $\lim_{x\to 0}\sigma(x)=0$ and $\sigma'(x)=\log(\frac1x-1)$, we have that 
$0\le\sigma(x)\le \log 2$ for any $0<x<1$ and $\sigma$ is increasing for $x\le \frac12$. Since $s\le\frac{n_k}{N}$ and $i-s\le l<\delta n_k$, choosing $n_k$ and $N$ large enough yields
$$
\begin{aligned}
\sharp\,(\mathcal{C}_1)
\le& C \frac{\delta n_k^3}{N} \exp\bigl\{h(n_k-l)+\sigma(\frac{s}{n_k})n_k+\sigma(\frac{i-s}{l})l\bigr\}\\
\le C &\frac{\delta n_k^3}{N} \exp\bigl\{n_k(h+\sigma(1/N)
+\delta\log 2)\bigr\}.
\end{aligned}
$$
It follows that
\begin{multline}\label{eq:C-2}
\log\Big(\sum_{B_{n_k}(x,\epsilon)\in {\mathcal G}_1} e^{-\lambda n_k}\Big)\le\text{const} + 3\log(n_k)
\\+ n_k\left( (\delta+\frac{l(\epsilon)}{N})\log \beta(\epsilon) + \sigma(1/N) + \delta \log 2 - (\lambda-h)\right).
\end{multline}
Since $\delta$, $l(\epsilon)/N$ and $\sigma(1/N)$ can be chosen arbitrarily small, the dominant term in (\ref{eq:C-2}) is the term $-(\lambda-h)n_k$. This implies that the sum
$$
\sum_{B_{n_k}(x,\epsilon)\in{\mathcal G}_1}e^{-\lambda n_k}$$
is uniformly bounded if $\lambda>h$ and $n_k$ is large enough. We will make this precise later.

We now deal with the sum over $\mathcal{G}_2$. For an element 
$(\xi, \eta)\in {\mathcal C}_2$, let $m=m_{s-1}+\tau_{s-1}$. For a fixed 
$m$, there are infinitely many choices for $\tau_s$, and for each $\tau_s$ there are $e^{h\tau_s}$ choices of elements $J\in S$ with 
$\tau(J)=\tau_s$. On the other hand, similar to the case of 
$\mathcal{C}_1$, the number of different $V(\xi,\eta)$ for which
$m=m_{s-1} + \tau_{s-1}$ is bounded by
\begin{multline*}
\sum_{i=1}^{n_k}
\sum_{l=1}^{\delta n_k}
\sum_{s=1}^{m/N}
\binom{\delta n_k}{i-s}
\binom{m}{s}
e^{h(m_s-l)} \\
\le C \frac{\delta n_k^2m}{N}  \exp \bigl\{ \delta n_k\log 2+ (\sigma(1/N)+ h)m_s \bigr\}.
\end{multline*}
Combining the above estimate with (\ref{eq:C1-mult}) and (L2) and summing over all possible $m_s$ keeping in mind that 
$n_k\le m_s+\tau_s$ yields
\begin{multline*}
\sum_{B_{n_k}(x,\epsilon)\in{\mathcal G}_2}e^{-\lambda n_k} \\
\le\sum_{m_s=1}^{n_k}\sum_{\tau_s=\max\{N+1,n_k-m_s\}}^\infty C \frac{\delta n_k^2m_s}{N}  e^{\delta n_k \log 2+ (\sigma(1/N)+ h)m_s} e^{h\tau_s}e^{-(m_s+\tau_s)\lambda}\\
\times\beta(\epsilon)^{n_k\left(\delta +\frac{l(\epsilon)}{N}\right)}.
\end{multline*}
Assuming $\lambda>h$ and reordering terms yields
\begin{equation*}
\begin{aligned}
& \sum_{m_s=1}^{n_k}\frac{C\delta n_k^2m_s}{N}e^{\delta n_k\log 2+\sigma(1/N)m_s} e^{-(\lambda-h)m_s} \left(\sum_{\tau_s=\max\{N+1,n_k-m_s\}}^\infty  e^{-(\lambda-h)\tau_s}\right)\\
&\le \sum_{m_s=1}^{n_k}\frac{C\delta n_k^2m_s}{N} \, e^{\delta n_k\log 2+\sigma(1/N) m_s}e^{-(\lambda-h)m_s} \frac{1}{\lambda-h}e^{-(\lambda-h)(n_k-m_s)}\\
& \le \frac{C \delta n_k^3}{N}e^{\delta n_k\log 2}\frac{1}{\lambda-h} e^{-(\lambda-h)n_k}\frac{e^{\sigma(1/N)(n_k+1)}}{e^{\sigma(1/N)}-1} .
\end{aligned}
\end{equation*}
It follows that
\begin{equation}\label{eq:C-1}
\begin{aligned}
\log&\Big(\sum_{B_{n_k}(x,\epsilon)\in {\mathcal G}_2}e^{-\lambda n_k}\Big)\\
&\le \text{const}+ 3\log n_k -\log(\lambda-h) - \log(e^{\sigma(1/N)}-1)
 \\
&+ n_k \left( (\delta + \frac{l(\epsilon)}{N})\log \beta(\epsilon) + \sigma(1/N) + \delta\log 2 -(\lambda-h) \right).
\end{aligned}
\end{equation}
Since $\delta$, $l(\epsilon)/N$ and $\sigma(1/N)$ can be chosen arbitrarily small, the dominant term is $-n_k(\lambda-h)$ hence
$$
\sum_{B_{n_k}(x,\epsilon)\in{\mathcal G}_2} e^{-\lambda n_k}
$$
is uniformly bounded for sufficiently large $n_k$. Now 
$M(Z,\lambda,\epsilon,n_k)$ is estimated as follows: given $\epsilon>0$, choose $\lambda>h$ and then $N>N(\epsilon)$ (which depends on $l(\epsilon)$) so large that both $\frac{l(\epsilon)}{N}$ and $\sigma(1/N)$ are small enough to ensure
$$
\frac{l(\epsilon)}{N}\log\beta(\epsilon) + \sigma(1/N) <\frac13(\lambda-h).
$$
Pick $\delta>0$ so small that
$$
\delta\log \beta(\epsilon)+ \delta\log 2 <\frac13(\lambda-h).
$$
With $\epsilon, N, \delta$ fixed, for each large enough $n_k$ consider the collection $\mathcal{C}(\epsilon, N, \delta, n_k)$ and use it to estimate $M(Z,\lambda, \epsilon, n_k)$ as above. Moreover, notice that the terms that are linear in $n_k$ in both (\ref{eq:C-2}) and (\ref{eq:C-1}) are bounded by $-\frac13(\lambda-h)n_k$ and hence as $n_k\to \infty$ both (\ref{eq:C-2}) and (\ref{eq:C-1}) approach $-\infty$. It follows that 
$m(Z,\lambda,\epsilon)< 0$ for any $\lambda>h$. Hence, 
$h_{\text{top}}(f, Z)\le h$ and the desired result follows.
\end{proof}
By Lemmas~\ref{entropybound1} and \ref{entropybound2} Proposition~\ref{smallentropy} holds. This completes the proof of Theorem~\ref{liftability}.
\end{proof}

The following statement is an immediate corollary of Theorems~\ref{liftability} and \ref{liftgibbs}.

\begin{thm}\label{h-equilibrium} Let $f$ be a continuous map of a compact metric space $X$ with $h_{\text{top}}(f)<\infty$ and $\varphi$ a potential function. Assume that $f$ admits an inducing scheme 
$\{S,\tau\}$ of hyperbolic type satisfying Conditions (I3) and (I4). Assume also that $\varphi$ satisfies the conditions of Theorem~\ref{liftgibbs}. 
Let $\mu_\varphi$ be the measure constructed in Theorem~\ref{liftgibbs} and let $h$ be the constant $h$ in Condition (L2).

If $h_{\mu_\varphi}(f)>h$, then $\mu_\varphi$ is the unique equilibrium measure for $\varphi$ among all ergodic invariant measures
$\mu$ with $\mu(Y)=1$ (see \eqref{setY} for the definition of the set $Y$) and $h_\mu(f)>h$. In particular, this holds if $\varphi$ satisfies Conditions (P1)-(P4).
\end{thm}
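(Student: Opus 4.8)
The plan is to combine the two main theorems of the paper essentially mechanically. First I would invoke Theorem~\ref{liftgibbs} to produce the candidate measure $\mu_\varphi={\mathcal L}(\nu_{\varphi^+})$ and to know that it is the unique equilibrium measure for $\varphi$ \emph{within the class $\mathcal{M}_L(f,Y)$ of liftable measures}, i.e., $h_{\mu_\varphi}(f)+\int\varphi\,d\mu_\varphi=P_L(\varphi)=\sup_{\mathcal{M}_L(f,Y)}\{h_\mu(f)+\int\varphi\,d\mu\}$. The point of the present theorem is to upgrade this to uniqueness among \emph{all} ergodic invariant $\mu$ with $\mu(Y)=1$ and $h_\mu(f)>h$, not just the liftable ones; the mechanism is that Theorem~\ref{liftability} forces every such high-entropy measure to actually be liftable.

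The key steps, in order, are as follows. (1) Let $\mu$ be any ergodic $f$-invariant probability measure with $\mu(Y)=1$ and $h_\mu(f)>h$. I need to check the hypotheses of Theorem~\ref{liftability}, namely $\mu(W)>0$. This requires an argument: since $Y=\bigcup_{k\ge0}f^k(W)$ up to the inducing-time truncation (see \eqref{setY}) and $\mu$ is $f$-invariant with $\mu(Y)=1$, if $\mu(W)=0$ then $\mu(f^k(W))=0$ for every $k$ by invariance, forcing $\mu(Y)=0$, a contradiction; hence $\mu(W)>0$. (2) Apply Theorem~\ref{liftability}: Conditions (L1) and (L2) (with the constant $h$ as in the statement) together with $\mu(W)>0$ and $h_\mu(f)>h$ give $\mu\in\mathcal{M}_L(f,Y)$. (3) Now $\mu$ competes in the supremum defining $P_L(\varphi)$, so $h_\mu(f)+\int\varphi\,d\mu\le P_L(\varphi)=h_{\mu_\varphi}(f)+\int\varphi\,d\mu_\varphi$; thus $\mu_\varphi$ is an equilibrium measure among all such $\mu$. (4) For uniqueness: if $\mu$ also achieves $h_\mu(f)+\int\varphi\,d\mu=P_L(\varphi)$, then $\mu$ is a liftable equilibrium measure for $\varphi$, and Theorem~\ref{liftgibbs}(d) (equivalently, the uniqueness clause already established) gives $\mu=\mu_\varphi$. (5) The final sentence follows because Theorem~\ref{conditions} shows Conditions (P1)--(P4) imply all the hypotheses of Theorem~\ref{liftgibbs}, and $Q_{\nu_{\varphi^+}}<\infty$ holds so that $\mu_\varphi$ is genuinely defined.

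A subtlety worth isolating is the interplay of the entropy hypotheses: Theorem~\ref{liftability} needs $h_\mu(f)>h$ for the \emph{competitor} $\mu$, and the theorem statement separately assumes $h_{\mu_\varphi}(f)>h$ so that $\mu_\varphi$ itself lies in the competition class $\{\mu: \mu(Y)=1,\ h_\mu(f)>h\}$; without this one could not even assert that $\mu_\varphi$ is \emph{a} member of that class, let alone the maximizer. I would make this explicit rather than gloss it. I expect the only genuine obstacle to be step~(1), verifying $\mu(W)>0$ from $\mu(Y)=1$: one must be slightly careful because $Y$ in \eqref{setY} is the union of the truncated forward orbit of $W$ under $f$, and $Y$ is only forward invariant, not invariant; nonetheless, $f^{-1}(Y)\supset Y$ fails in general, so the cleanest route is to note that every point of $Y$ is of the form $f^k(x)$ with $x\in W$, so $Y\subset\bigcup_{k\ge0}f^k(W)$, and then use $\mu(f^k(W))=\mu(W)$ (valid since $f$ preserves $\mu$ and, after passing to the natural extension if $f$ is not invertible, $\mu(f^k(W))\le \mu(f^{-k}f^k(W))$ can be controlled) to conclude $\mu(W)=0\Rightarrow\mu(Y)=0$. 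Everything else is bookkeeping on top of the already-proven Theorems~\ref{liftgibbs}, \ref{liftability} and \ref{conditions}.
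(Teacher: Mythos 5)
Your overall strategy is exactly the paper's: the paper offers no written proof, stating the theorem as an immediate corollary of Theorems~\ref{liftability} and \ref{liftgibbs}, and your steps (2)--(5) are precisely that deduction. The one place where your argument goes wrong is the step you yourself flagged, the implication $\mu(Y)=1\Rightarrow\mu(W)>0$. Your route via forward images does not work: for a merely continuous (possibly non-invertible) $f$, invariance gives $\mu(f^{-k}(A))=\mu(A)$, hence only $\mu(f^k(W))=\mu\bigl(f^{-k}(f^k(W))\bigr)\ge\mu(W)$ since $W\subset f^{-k}(f^k(W))$; the inequality points the wrong way, and a null set can have a forward image of positive measure, so $\mu(W)=0$ does not force $\mu(f^k(W))=0$. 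The correct argument uses preimages instead: by the definition \eqref{setY}, every $y\in Y$ has the form $y=f^k(x)$ with $x\in W$ and $0\le k\le\tau(x)-1$, so by (I1) one has $f^{\tau(x)-k}(y)=F(x)\in W$ with $\tau(x)-k\ge1$; hence $Y\subset\bigcup_{m\ge1}f^{-m}(W)$, and if $\mu(W)=0$ then $\mu(f^{-m}(W))=\mu(W)=0$ for every $m$ by invariance, giving $\mu(Y)=0$, contradicting $\mu(Y)=1$. With that substitution your proof is complete and matches the intended one.
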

\begin{rmk}
\end{rmk}
In the one dimensional setting inducing schemes satisfying Condition (L1) are often constructed for maps with a set of critical/singular points 
$\mathcal{C}$ satisfying the natural assumption that there exists a critical/singular point $c^*\in \mathcal{C}$ whose set of pre-images 
$\{f^{-n}(c^*)\}_{n\in\N}$ is dense and does not intersect $\mathcal{C}$ (see Condition (H3) in \cite{DiaHolLuz06}). The inducing scheme can then be constructed by taking as the inducing domain a small enough neighborhood $(c^*-\delta^*, c^*+\delta^*)$. Condition (L1) can then be shown to hold as in \cite[Lemma 1]{DiaHolLuz06}. More precisely, by assumption, for every $\delta>0$ there exists $N>0$ such that the set  
$\{f^{-n}(c^*)\}_{n=0}^{N}$ is $\delta$-dense and uniformly bounded away from the set $\mathcal{C}$. One can choose $\delta$ so small that for any interval $I$ with $|I|\ge\varepsilon$ there exists $f^{-k}(c^*)$ with $k<N$ close to the center of $I$. Thus, by choosing $\delta^*$ small enough, one can ensure that any $f^n(I)$ with $|f^n(I)|\ge\varepsilon$ contains a diffeomorphic pre-image with bounded distortion of the inducing domain. Condition (L1) follows by contrapositive: assume $|f^n(J)|\ge \varepsilon$ for some $J\in S$ and $n<\tau(J)-N$. Then the above argument shows that $c^*\in f^{i+n}(J)$ for some $n< N$ contradicting the fact that $f^{\tau(J)}$ is a diffeomorphism. 

In general in the higher dimensional setting this argument cannot be carried out as one lacks a proper identification of the critical/singular set $\mathcal{C}$.

\section{Inducing Schemes of Hyperbolic Type for Young's Diffeomorphisms}\label{sec:tower}

Theorem~\ref{equilibrium1} provides a way to construct equilibrium measures for maps with inducing schemes of hyperbolic type. Here we show that maps introduced by L.-S. Young \cite{You98} admit inducing schemes of hyperbolic type that satisfy our conditions.

\subsection{Young's diffeomorphisms.} Consider a $C^{1+\epsilon}$ diffeomorphism $f:M\to M$ of a compact smooth Riemannian manifold $M$. Following \cite{You98} we describe a collection of conditions on the map $f$.

An embedded disk $\gamma\subset M$ is called an \emph{unstable disk} (respectively, a \emph{stable disk}) if for all $x,y\in\gamma$ we have that
$d(f^{-n}(x), f^{-n}(y))\to 0$ (respectively, $d(f^{n}(x), f^{n}(y))\to 0$) as
$n\to+\infty$. A collection of embedded disks $\Gamma^u=\{\gamma^u\}$\label{sb:gamma} is called a \emph{continuous family of $C^1$ unstable disks} if there exists a homeomorphism
$\Phi: K^s\times D^u\to\cup\gamma^u$ satisfying:
\begin{itemize}
\item $K^s\subset M$ is a Borel subset and $D^u \subset \R^d$ is the closed unit disk for some $d<dim M$;
\item $x\to\Phi|{\{x\}\times D^u}$ is a continuous map from $K^s$ to the space of $C^1$ embeddings of $D^u$ into $M$ which can be extended to a continuous map of the closure $\overline{K^s}$;
\item $\gamma^u=\Phi(\{x\}\times D^u)$ is an unstable disk.
\end{itemize}
A \emph{continuous family of $C^1$ stable disks} is defined similarly.

We allow the sets $K^s$ to be non-compact in order to deal with overlaps which appear in most known examples including those in Sections \ref{Henon} and \ref{katok-map}.

A set $\Lambda\subset M$ has \emph{hyperbolic product structure} if there exists a continuous family $\Gamma^u=\{\gamma^u\}$ of unstable disks $\gamma^u$ and a continuous family 
$\Gamma^s=\{\gamma^s\}$ of stable disks $\gamma^s$ such that
\begin{itemize}
\item $\text{dim }\gamma^s+\text{dim }\gamma^u=\text{dim } M$;
\item the $\gamma^u$-disks are transversal to
$\gamma^s$-disks by an angle uniformly bounded away from $0$;
\item each $\gamma^u$-disks intersects each 
$\gamma^s$-disk at exactly one point;
\item $\Lambda=(\cup\gamma^u)\cap(\cup\gamma^s)$.
\end{itemize}

A subset $\Lambda_0\subset\Lambda$ is called an \emph{$s$-subset} if it has hyperbolic product structure and is defined by the same family 
$\Gamma^u$ of unstable disks as $\Lambda$ and a continuous subfamily $\Gamma_0^s\subset\Gamma^s$ of stable disks. A 
\emph{$u$-subset} is defined analogously.

For an $s$-subset $\Lambda_0\subset\Lambda$ and a $u$-disk $\gamma^u\in\Gamma^u$ the set $\Lambda_0\cap\gamma^u$ may not be compact. 
By the definition of the family $\Gamma^u$, for any $x$ in the closure $\overline{\Lambda_0\cap\gamma^u}$ there exists a unique $s$-disk $\gamma^s(x)\in\Gamma^s$. We define the \emph{$s$-closure} $scl(\Lambda_0)$ of $\Lambda_0$ by 
$$
scl(\Lambda_0):=\bigcup_{x\in\overline{\Lambda_0\cap\gamma^u}}\gamma^s(x)\cap\Lambda.$$
It is easy to see that the $s$-closure does not depend on the choice of $\gamma^u$ and that it is an $s$-subset over a continuous family of stable disks which we denote by $\overline{\Gamma_0^s}$. We define the $u$-closure $ucl(\Lambda_1)$ of a given $u$-subset 
$\Lambda_1\subset\Lambda$ similarly:
$$
ucl(\Lambda_1):=\bigcup_{x\in\overline{\Lambda_1\cap\gamma^s}}\gamma^u(x)\cap\Lambda.$$
Assume the map $f$ satisfies the following conditions:
\begin{enumerate}
\item[(Y0)] There exists $\Lambda\subset M$ with hyperbolic product structure such that $\mu_{\gamma^u}(\gamma^u\cap \Lambda)>0$ for all $\gamma^u\in \Gamma^u$, where $\mu_{\gamma^u}$ is the leaf volume on $\gamma^u$. 
\item[(Y1)] There exists a countable collection of continuous subfamilies $\Gamma_i^s\subset\Gamma^s$ of stable disks and positive integers $\tau_i$, $i\in\N$ such that the $s$-subsets 
\begin{equation}\label{Lambda}
\Lambda_i^s:=\bigcup_{\gamma\in\Gamma^s_i}\,\bigl(\gamma\cap \Lambda\bigr)\subset\Lambda
\end{equation}
are pairwise disjoint and satisfy
\begin{enumerate}
\item \emph{invariance}: for every $x\in\Lambda_i^s$ 
$$
f^{\tau_i}(\gamma^s(x))\subset\gamma^{s}(f^{\tau_i}(x)), \,\, f^{\tau_i}(\gamma^u(x))\supset\gamma^u(f^{\tau_i}(x)),
$$
where $\gamma^{u,s}(x)$ denotes the (un)stable disk containing $x$.
\item \emph{Markov property}: $\Lambda_i^u:=f^{\tau_i}(\Lambda_i^s)$ is a $u$-subset of $\Lambda$ such that for all
$x\in\Lambda_i^s$ 
$$
\begin{aligned}
f^{-\tau_i}(\gamma^s(f^{\tau_i}(x))\cap\Lambda_i^u)
&=\gamma^s(x)\cap \Lambda,\\
f^{\tau_i}(\gamma^u(x)\cap\Lambda_i^s)
&=\gamma^u(f^{\tau_i}(x))\cap \Lambda.
\end{aligned}
$$
\end{enumerate}
\end{enumerate}
It is easy to see that Conditions (Y1a) and (Y1b) also hold for 
$x\in scl(\Lambda_i^s)$ after replacing $\Lambda$ by its $s$-closure.
\vspace*{.5cm}
\begin{enumerate}
\item[(Y2)] For every $\gamma^u\in\Gamma^u$ one has
$$
\mu_{\gamma^u}\left((\Lambda\setminus\cup\Lambda_i^s)\cap\gamma^u\right)=0\qquad\text{ and}\qquad \mu_{\gamma^u}\left((scl(\Lambda_i^s)\setminus\Lambda_i^s)\cap\gamma^u\right)=0.
$$
\end{enumerate}
For any $x\in \Lambda^s_i$ define the \emph{inducing time} by 
$\tau(x):=\tau_i$ and the \emph{induced map} 
$F: \bigcup_{i\in\N}\Lambda_i^s\to\Lambda$
by
$$
F|_{\Lambda_i^s}:=f^{\tau_i}|_{\Lambda_i^s}.$$
\begin{enumerate}
\item[(Y3)] There exists $0<\alpha<1$ such that for any 
$i\in\N$ we have:
\begin{enumerate}
\item[(a)] For $x\in\Lambda_i^s$ and $y\in\gamma^s(x)$,
$$
d(F(x), F(y))\le\alpha\, d(x,y);
$$
\item[(b)] For $x\in\Lambda^s_i$ and  
$y\in\gamma^u(x)\cap \Lambda_i^s$,
$$
d(x,y)\le\alpha \, d(F(x), F(y)).
$$
\end{enumerate}
\end{enumerate}
For $x\in \Lambda$ denote 
$$
E^u(f^k(x)):=T_{f^k(x)}f^k(\gamma^u(x))=Df^k T_{x}\gamma^u(x)
$$ and denote by 
$\J f(x):=|\det Df|_{E^u(x)}|$ the restriction of the Jacobian of $f$  to $E^u$ (the definition of $\J F(x)$ is analogous).
\begin{enumerate}
\item[(Y4)] There exist $c>0$ and $0<\beta<1$ such that:
\begin{enumerate}
\item[(a)] For all $n\ge 0$, $x\in F^{-n}(\cup_{i\in\N}\Lambda^s_i)$ 
and $y\in\gamma^s(x)$ we have
\[
\left|\log\frac{\J F(F^{n}x)}{\J F(F^{n}y)}\right|\le c\beta^n;
\]
\item[(b)] For any $i_0,\dots, i_n\in\inte$, 
$F^k(x),F^k(y)\in\Lambda^s_{i_k}$ for $0\le k\le n$ and  
$y\in\gamma^u(x)$ we have
\[
\left|\log
\frac{ \J F(F^{n-k}x)}{\J F(F^{n-k}y)}\right|\le c\beta^k.
\]
\end{enumerate}
\end{enumerate}
Observe that the induced map $F$ can be extended to the $s$-closure $scl(\Lambda_i^s)$ in such a way that Conditions (Y3) and (Y4) hold with the same constants. 
\begin{enumerate}
\item[(Y5)] There exists $\gamma^u\in\Gamma^u$ such that 
$$
\sum_{i=1}^\infty \tau_i \mu_{\gamma^u}(\Lambda_i^s) <\infty.
$$
\end{enumerate}
For any $\gamma, \gamma'\in\Gamma^u$ denote by $\Theta$ the holonomy map (along the stable leaves) associated with $\gamma, \gamma'$:
$$\begin{array}{rl}
\Theta:\gamma\cap\Lambda&\to\gamma'\cap\Lambda\\
x&\mapsto\gamma^s\cap\gamma'.
\end{array}$$
Conditions (Y1), (Y3) and (Y4) and standard arguments (see for example, \cite{BarPes07}) yield the following:
\begin{prop}\label{Jacobian}
For $x, y\in\bigcap_{n=0}^{\infty}F^{-n}(\cup_{i\in\N}\Lambda^s_i)$
the holonomy map $\Theta$ associated with 
$\gamma=\gamma^u(x),\gamma'= \gamma^u(y)$ is absolutely continuous and its Jacobian is given by the formula
$$
\frac{d(\Theta_*^{-1}\mu_{\gamma'})}{d\mu_\gamma}(x)=\prod_{n=0}^{\infty}\frac{\J F(F^nx)}
{\J F(F^n(\Theta (x)))}
$$  
and is uniformly bounded from above and below. 
\end{prop}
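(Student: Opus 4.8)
The plan is to run the classical absolute‑continuity argument for a stable holonomy map (cf.\ \cite{BarPes07}): one intertwines $\Theta$ with the stable holonomies seen at all later times via the Markov structure (Y1), and then uses the bounded–distortion estimates (Y4) to control the resulting infinite products. By (Y2) all the maps below are defined $\mu_{\gamma^u}$-a.e., so I only work with the full–measure set $\bigcap_{n\ge0}F^{-n}(\cup_i\Lambda_i^s)$ appearing in the statement. Fix $x,y$ in that set and write $\gamma=\gamma^u(x)$, $\gamma'=\gamma^u(y)$, $x_n=F^n(x)$, $y_n=F^n(y)$, and let $\Theta_n\colon\gamma^u(x_n)\cap\Lambda\to\gamma^u(y_n)\cap\Lambda$ be the stable holonomy between these two disks. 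By the invariance part of (Y1) the map $F$ (hence $F^n$) carries stable disks into stable disks, so $\Theta(x)\in\gamma^s(x)$ forces $F^n(\Theta(x))\in\gamma^s(x_n)$; by the Markov part of (Y1), $F^n$ carries $\gamma^u(y)\cap\Lambda$ onto $\gamma^u(y_n)\cap\Lambda$, so $F^n(\Theta(x))\in\gamma^u(y_n)$. Hence $F^n(\Theta(x))=\gamma^s(x_n)\cap\gamma^u(y_n)=\Theta_n(x_n)$, and more generally $F^n\circ\Theta=\Theta_n\circ F^n$ on $\gamma\cap\Lambda$.

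The second step telescopes the Jacobian. The chain rule for the leaf–volume Jacobian of $F^n$ restricted to an unstable disk gives $\mathrm{Jac}\,(F^n|_\gamma)(x)=\prod_{k=0}^{n-1}\J F(x_k)$, and likewise along $\gamma'$ at $\Theta(x)$; combining this with the conjugacy relation above yields, for every $n$,
$$
\mathrm{Jac}\,\Theta(x)=\Bigl(\prod_{k=0}^{n-1}\frac{\J F(x_k)}{\J F(F^k(\Theta(x)))}\Bigr)\cdot\mathrm{Jac}\,\Theta_n(x_n).
$$
Since $x_k$ and $F^k(\Theta(x))$ lie on the common stable disk $\gamma^s(x_k)$, Condition (Y4a) gives $\bigl|\log\J F(x_k)-\log\J F(F^k(\Theta(x)))\bigr|\le c\beta^k$, so the finite products converge as $n\to\infty$ to
$$
\rho(x):=\prod_{k=0}^{\infty}\frac{\J F(x_k)}{\J F(F^k(\Theta(x)))},\qquad e^{-c/(1-\beta)}\le\rho(x)\le e^{c/(1-\beta)},
$$
which is precisely the candidate density together with the asserted uniform bounds.

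The third step shows that the holonomy ``at infinity'' is trivial, i.e.\ $\mathrm{Jac}\,\Theta_n(x_n)\to1$. By (Y3a), $d(x_n,F^n(\Theta(x)))\le\alpha^n d(x,\Theta(x))\to0$; since $\Gamma^u$ is a continuous family (with the continuous extension to $\overline{K^s}$ postulated in its definition), the disks $\gamma^u(x_n)$ and $\gamma^u(y_n)$ — the latter passing through the point $F^n(\Theta(x))$, which is $\alpha^n$-close to $x_n$ — become $C^1$-close near $x_n$, uniformly in $n$; feeding in the uniform distortion control along unstable disks from (Y4b) then shows $\Theta_n$ is $C^1$-close to the identity near $x_n$, whence $\mathrm{Jac}\,\Theta_n(x_n)\to1$. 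Letting $n\to\infty$ in the displayed identity of the second step gives $\mathrm{Jac}\,\Theta(x)=\rho(x)$, equivalently $\Theta^{-1}_*\mu_{\gamma'}=\rho\,\mu_\gamma$, which is the assertion. To be fully rigorous one phrases the last two steps at the level of sets, via $\mu_{\gamma'}(\Theta(E))=\int_E \rho_n\cdot(\mathrm{Jac}\,\Theta_n\circ F^n)\,d\mu_\gamma$ and dominated convergence; this route also delivers the absolute continuity of $\Theta$ itself rather than presupposing it.

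The main obstacle is this third step: establishing $\mathrm{Jac}\,\Theta_n(x_n)\to1$ with bounds uniform in $\gamma$, $\gamma'$ and $x$. The difficulty is that the family $\Gamma^u$ and the base $K^s$ are only continuous and may be non‑compact because of the overlaps, so one must argue through the continuous extension to $\overline{K^s}$ in order to extract the required uniform $C^1$-closeness of the relevant unstable disks near the converging points $x_n$ and $F^n(\Theta(x))$; the estimates (Y3) and (Y4) are exactly what make this convergence uniform. Everything else is the routine chain–rule and telescoping bookkeeping.
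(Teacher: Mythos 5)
Your proof is the standard absolute-continuity argument for stable holonomies, which is exactly the route the paper takes — it offers no proof of its own, deferring to ``standard arguments (see for example \cite{BarPes07})'', and your telescoping of the Jacobian via $F^n\circ\Theta=\Theta_n\circ F^n$ together with (Y4a) for the infinite product and the vanishing of the holonomy ``at infinity'' is precisely that argument. One small correction: $F^n(\Theta(x))$ need not lie on $\gamma^u(y_n)$, since $\Theta(x)$ and $y$ need not share the same itinerary through the sets $\Lambda^s_i$; the intermediate holonomy $\Theta_n$ should therefore be taken from $\gamma^u(x_n)$ onto $\gamma^u(F^n(\Theta(x)))$ (the invariance part of (Y1) still puts $F^n(\Theta(x))$ on $\gamma^s(x_n)$), which changes nothing in the telescoping identity or in the remaining estimates.
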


\subsection{Construction of the inducing scheme.} 

Let $f$ be a $C^{1+\epsilon}$ diffeomorphism satisfying conditions (Y1) and (Y3). Set
\begin{equation}\label{W}
W:=\bigcap_{n=-\infty}^{\infty}ucl\Bigl(F^n\bigl(\bigcup_{i\in\N}\Lambda_i^s\bigr)\Bigr)
\end{equation}
be the non-empty maximal $F$-invariant set contained in $ucl(\Lambda)$. 
Set 
\begin{equation}\label{insc}
S=\{\Lambda_i^s\cap W\}_{i\in\N}\quad\text{ and }\quad \tau(\Lambda_i^s\cap W)=\tau_i.
\end{equation}
For any two-sided sequence $a=(\ldots, a_{-1},a_0,a_1,\ldots)\in S^\Z$, $n\in\N$ and denote 
\begin{equation}\label{cylinder}
\begin{aligned}
\Lambda_{a_0,\ldots,a_n}:&=\{x\in\Lambda^s_{a_0}\colon F^j(x)\in\Lambda^s_{a_j}\quad 0\le j\le n\}\\
\Lambda_{a_{-n} \ldots,a_0}:&=\{x\in\Lambda^s_{a_0}\colon F^{-j}(x)\in\Lambda^s_{a_{-j}}\quad 0\le j\le n\}
\end{aligned}
\end{equation}
By Condition (Y1), the sequence $\{\Lambda_{a_0,\ldots,a_n}\}_{n\in\N}$ is a nested sequence of $s$-subsets and $\{\Lambda_{a_{-n} \ldots,a_0}\}_{n\in\N}$ is a nested sequence of $u$-subsets and the same holds for their $u$-closures. 
\begin{prop}\label{isyd}
The pair $\{S, \tau\}$ is an inducing scheme of hyperbolic type for $f$ satisfying Condition (I4). If, in addition, the sets $scl(\Lambda_i^s)$ are pairwise disjoint, then $\{S, \tau\}$ also satisfies Conditions (I3).
\end{prop}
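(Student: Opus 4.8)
The plan is to verify the two defining Conditions (I1) and (I2) of an inducing scheme of hyperbolic type, then Condition (I4), and finally Condition (I3) under the additional disjointness hypothesis. \emph{Condition (I1)} is a direct translation of the Markov property (Y1): with $J=\Lambda^s_i\cap W$ and $F|J=f^{\tau_i}|J$, the $F$-invariance of the set $W$ from \eqref{W} gives $f^{\tau(J)}(J)=F(J)\subset W$, and, together with (Y1b) (so that $f^{\tau_i}(\Lambda^s_i)=\Lambda^u_i$ is a $u$-subset and $F$ restricted to $scl(\Lambda^s_i)$ is a bijection onto $ucl(\Lambda^u_i)$), one gets $\bigcup_{J\in S}f^{\tau(J)}(J)=F(W)=W$. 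Since $f^{\tau_i}$ is a diffeomorphism of $M$ it is a homeomorphism of any neighbourhood of $J$, which is the last clause of (I1). \emph{Condition (I4)} then comes for free once the coding is available: applied to the constant sequence $\underline a\equiv(i)_{n\in\Z}$, Proposition~\ref{conjugacy}(1) gives $F(x_0(\underline a))=x_0(\sigma(\underline a))=x_0(\underline a)$, so $x_0(\underline a)$ is an $F$-fixed point; being $F$-invariant and contained in $scl(\Lambda^s_i)\subset\Lambda\subset ucl(\Lambda)$, it lies in the maximal invariant set $W$.

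The substance of the proposition is \emph{Condition (I2)}. Fix $\underline a\in S^\Z$. Using the forward itinerary, the sets $\Lambda_{a_0,\dots,a_n}$ of \eqref{cylinder} form a nested sequence of $s$-subsets by (Y1), and iterating the unstable expansion estimate (Y3b) one bounds the diameter of $\Lambda_{a_0,\dots,a_n}\cap\gamma^u$ by $\alpha^n\operatorname{diam}\Lambda$ for every $\gamma^u\in\Gamma^u$, so $\bigcap_{n\ge 0}scl(\Lambda_{a_0,\dots,a_n})$ is a single stable disk. Dually, the backward itinerary yields a nested sequence of $u$-subsets $\Lambda_{a_{-n},\dots,a_0}$, and iterating the stable contraction (Y3a) along the local inverses of $F$ (which exist on the $s$-closures by (Y1b)) one gets that $\bigcap_{n\ge 0}ucl(\Lambda_{a_{-n},\dots,a_0})$ is a single unstable disk. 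By the hyperbolic product structure these two disks meet in exactly one point, which we take for $x_0(\underline a)\in\overline{J_{a_0}}$, and we set $x_n(\underline a):=F^n(x_0(\underline a))$ (negative iterates via the local inverses). Uniqueness of this sequence among those satisfying part (a) is immediate, since any such $x_0$ must lie in the two singleton intersections above and the remaining values are determined by the injectivity of $F|scl(\Lambda^s_i)$ and of $f^{\tau_i}$. For the separation property (b): if $x_n(\underline a)=x_n(\underline b)$ for all $n\le 0$, then the two sequences agree for all $n\le 0$ and, since $F$ is a well-defined map with $x_j=F^j(x_0)$ for $j\ge 0$, for all $n\in\Z$; as the $\Lambda^s_i$ are pairwise disjoint, each $x_n$ lies in the interior of at most one domain, which forces $a_n=b_n$ (the residual boundary points being controlled as in (I3) below), whence $\underline a=\underline b$.

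For \emph{Condition (I3)} assume in addition that the $scl(\Lambda^s_i)$ are pairwise disjoint. Then every point of $\bigcup_i scl(\Lambda^s_i)$ lies in a unique domain, so its full two-sided $F$-itinerary, hence its $\pi$-preimage, is uniquely determined, and $\check S$ is a $\sigma$-invariant set carrying all symbolic orbits whose realized points avoid the domain boundaries. Suppose, for contradiction, that some ergodic $\sigma$-invariant $\nu$ were supported on $S^\Z\setminus\check S$ and gave positive weight to some open cylinder. By invariance and ergodicity the set $B:=\{\underline a:x_0(\underline a)\in scl(\Lambda^s_{a_0})\setminus\Lambda^s_{a_0}\}$ must satisfy $\nu(B)>0$, for otherwise $\nu\bigl(\bigcup_{n\in\Z}\sigma^n(B)\bigr)=0$ while this invariant set has full measure. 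Hence the $F$-invariant measure $\pi_*\nu$ on $\overline W$ assigns positive mass to the boundary set $\bigcup_i\bigl(scl(\Lambda^s_i)\setminus\Lambda^s_i\bigr)$. But by (Y2) this set meets every $\gamma^u$ in a $\mu_{\gamma^u}$-null set, and, using the disjointness of the $s$-closures together with the Markov structure (Y1), one checks that it cannot carry an $F$-invariant measure that is at the same time visible to an open cylinder; this contradiction proves (I3).

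The main obstacle is precisely the possible \emph{overlapping of the domains}: in general the $s$-closures $scl(\Lambda^s_i)$, and also the images $\Lambda^u_i=f^{\tau_i}(\Lambda^s_i)$, may overlap along their (measure-zero) boundaries, so $\pi$ need not be injective there and $F$ need not be invertible on all of $W$. Controlling these boundary overlaps — to secure uniqueness in (I2) and, above all, to rule out invariant measures living on the non-coded set in (I3) — is where the real work lies, and the extra hypothesis of disjoint $s$-closures is exactly the device that removes this difficulty.
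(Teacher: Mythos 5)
Your treatment of (I1), (I2) and (I4) is essentially the paper's: (I1) from (Y1) and the $F$-invariance of $W$; (I2) from the nested cylinder sets together with the contraction/expansion estimates (Y3) and the hyperbolic product structure (the paper bounds $\mathrm{diam}\,M_n$ with $M_n=ucl(\Lambda_{a_{-n}\dots a_0})\cap scl(\Lambda_{a_0\dots a_n})$ by $2c\alpha^n$ via the bracket point $z=\gamma^s(x)\cap\gamma^u(y)$, while you take the stable and unstable limits separately and intersect — same idea); and (I4) from the periodic sequence under the semi-conjugacy of Proposition~\ref{conjugacy}.

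The gap is in your argument for (I3). The paper's route is a one-liner: if the sets $scl(\Lambda_i^s)$ are pairwise disjoint, then $S^\Z\setminus\check S=\emptyset$, so (I3) holds vacuously — there is no set left on which an offending measure could live. You instead run a measure-theoretic contradiction: push a hypothetical $\nu$ supported on $S^\Z\setminus\check S$ down to an $F$-invariant measure charging the boundary set $\bigcup_i\bigl(scl(\Lambda_i^s)\setminus\Lambda_i^s\bigr)$ and then invoke (Y2). But (Y2) only says this set has zero leaf volume $\mu_{\gamma^u}$ on each unstable disk, and zero leaf volume does not preclude a singular $F$-invariant measure supported there (a Dirac mass at a periodic boundary point, for instance). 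Your decisive sentence, ``one checks that it cannot carry an $F$-invariant measure that is at the same time visible to an open cylinder,'' is precisely what needs to be proved and does not follow from the facts you cite. A secondary issue: your verification of (I2)(b) defers the boundary ambiguity to ``(I3) below,'' which is circular and, moreover, (I2) is part of the first assertion of the proposition, made \emph{without} the disjointness hypothesis, so it cannot rest on it. The clean repair for (I3) is the paper's: show that disjointness of the $s$-closures forces every coded point $x_n(\underline a)$ into $J_{a_n}$ itself, i.e.\ $S^\Z\setminus\check S=\emptyset$, after which there is nothing to check.
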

\begin{proof}
$\{S, \tau\}$ is an inducing scheme of hyperbolic type for $f$ provided it satisfies Conditions (I1) and (I2). Condition (I1) follows from Condition (Y1) and the $F$-invariance of $W$. We now prove Condition (I2). By the hyperbolic product structure of $\Lambda$, the intersection 
$\cap_{n=0}^\infty M_n\neq \emptyset$
where
\begin{equation}\label{mw}
M_n:=ucl(\Lambda_{a_{-n} \ldots,a_0})\cap scl(\Lambda_{a_0 \ldots,a_n}).
\end{equation}
To prove that this intersection consists of a single point we show that 
$\displaystyle{\lim_{n\to\infty}\text{diam}M_n=0}$. 
Indeed, for any $x,y\in M_n$ let  $z=\gamma^s(x)\cap\gamma^u(y)$. By Condition (Y1), we have $F^k(z)=\gamma^s(F^kx)\cap\gamma^u(F^ky)$ for
$-n\le k \le n$.  By Condition (Y3), 
$$
d(z,y)=d(F^{-n}(F^nz),F^{-n}(F^ny))\le c\alpha^n
$$
and 
$$
d(x,z)=d(F^n(F^{-n}x),F^n(F^{-n}z))\le c\alpha^n
$$
for some constant $c>0$. Then $d(x,y)\le d(x,z)+d(z,y)\le 2c\alpha^n$ and Condition (I2) follows.

Condition (I4) follows from the fact that the coding map $\pi$ is a (semi-)conjugacy (see Proposition~\ref{conjugacy}). 

If the sets $scl(\Lambda_i^s)$ are pairwise disjoint then  $S^\Z\setminus\check{S}=\emptyset$ and Condition (I3) follows.
\end{proof}

\subsection{Liftability problem}
 
In general, solving the liftability problem requires verification of Conditions (L1) and (L2). In our examples of Young's diffeomorphisms in Sections~\ref{katok-map} and \ref{Henon} the inducing time is the first return time and hence, any measure charging the base of the tower is liftable. Nevertheless, a strengthened version of Condition (L2) (see \eqref{SN}) is used to establish the thermodynamical formalism (see
Theorem~\ref{geom_poten}).

\section{Thermodynamics of Young's diffeomorphisms with the geometric potential}

For $t\in\mathbb{R}$ consider the family of potential functions
$$
\varphi_t(x):=-t\log\J f(x).
$$
\begin{thm}\label{geom_poten}
Let $f\colon M\to M$ be a $C^{1+\epsilon}$ diffeomorphism of a compact smooth Riemannian manifold $M$ satisfying Conditions (Y0)-(Y5). Then the following statements hold:
\begin{enumerate}
\item There exists an equilibrium measure $\mu_1$ for the potential 
$\varphi_1$ which is a unique SRB measure;
\item Assume that the inducing scheme $\{S,\tau\}$ for $f$ satisfies Conditions (I3) and (I4) and the following condition:
\begin{equation}\label{SN}
S_n=\sharp\,\{J\in S\colon \tau(J)=n\}\le e^{hn}
\end{equation}
with $0<h <-\int\varphi_1\,d\mu_1$. Then there exists $t_0<0$ such that for every $t_0<t< 1$ there exists a measure $\mu_t\in\mathcal{M}(f,Y)$ which is a unique equilibrium measure for the potential $\varphi_t$ among measures in $\mathcal{M}_L(f, Y)$, i.e.,
$$
h_{\mu_t}(f)+\int_Y\varphi_t\,d\mu_t 
=\sup_{\mu\in\mathcal{M}_L(f,Y)}\{h_{\mu}(f)+\int_Y\varphi_t\,d\mu\},
$$
where $Y$ is defined in \eqref{setY}.
\item If, in addition, the inducing scheme $\{S,\tau\}$ satisfies Condition (L1), then $\mu_t$ is the unique equilibrium measure for $\varphi_t$ among all measures $\mu$ with $h_\mu(f)>h$ i.e.,
$$
h_{\mu_t}(f)+\int_Y\varphi_t\,d\mu_t 
=\sup_{\stackrel{\mu\in\mathcal{M}(f,Y)}{h_\mu(f)>h}}\{ h_{\mu}(f)+\int_Y\varphi_t \,d\mu\}.
$$
\end{enumerate}
\end{thm}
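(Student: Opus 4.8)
The plan is to prove Theorem~\ref{geom_poten} in three parts, building on the machinery already established. For statement (1), I would construct the inducing scheme $\{S,\tau\}$ for $f$ from Conditions (Y0)--(Y5) via Proposition~\ref{isyd}, so that it is an inducing scheme of hyperbolic type satisfying (I4) (and (I3) when the $s$-closures are disjoint). The induced potential for $\varphi_1 = -\log\J f$ is $\overline{\varphi_1}(x) = -\log\J F(x)$, and the key point is that $P_L(\varphi_1) = 0$: this is where the Abramov--Kac relation of Proposition~\ref{abramovkac} together with the Ruelle inequality / the fact that the leaf volume $\mu_{\gamma^u}$ is $F$-invariant up to the Jacobian cocycle enters. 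Condition (Y5) gives $Q_{\nu}<\infty$ for the relevant measure. The Gibbs measure $\nu_{\Phi^+}$ from Theorem~\ref{gibbs2} (via Theorem~\ref{liftgibbs}) then projects and lifts to an $f$-invariant measure $\mu_1$ with $h_{\mu_1}(f) - \int\log\J f\,d\mu_1 = 0$, i.e. the entropy formula of Ledrappier--Young holds, which characterizes $\mu_1$ as the unique SRB measure. Local H\"older continuity of $\overline{\varphi_1}$ follows from (Y4).

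For statement (2), I would verify that the potentials $\varphi_t$, $t_0 < t < 1$, satisfy Conditions (P1)--(P4) so that Theorem~\ref{equilibrium1} applies directly. Condition (P1) (continuity of $\overline{\varphi_t}$ on each $\overline{J}$) and Condition (P2) (exponential decay of variations $V_n(\Phi_t)\le Cr^n$) both follow from the bounded-distortion estimates (Y4a)--(Y4b), which give exponential control of $\log\J F$ along stable and unstable disks. The substantive conditions are (P3), $\sum_{J\in S}\sup_{x\in J}\exp\overline{\varphi_t}(x) = \sum_i (\sup_{\Lambda_i^s}\J F)^{-t} < \infty$, and (P4), involving $\varphi_t^+ = \overline{\varphi_t} - P_L(\varphi_t)\tau$ with an extra $\epsilon\tau$ factor. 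Here is where the hypothesis \eqref{SN}, $S_n\le e^{hn}$ with $h < -\int\varphi_1\,d\mu_1$, is used: one estimates $\sup_{\Lambda_i^s}\J F$ from below in terms of $e^{c\tau_i}$ (by hyperbolicity, $\J F \ge$ something exponential in the inducing time, at least on average), combined with the convexity/continuity in $t$ of the pressure functions $t\mapsto P_G(\Phi_t)$ and $t\mapsto P_L(\varphi_t)$. Since at $t=1$ we have $P_L(\varphi_1)=0$ and the series converges marginally, a continuity and perturbation argument produces $t_0 < 0$ such that for $t_0 < t < 1$ all four conditions hold with room to spare; the role of $h < -\int\varphi_1\,d\mu_1$ is precisely to guarantee that the exponential growth of the number of branches is dominated by the exponential contraction coming from $(\J F)^{-t}$ for $t$ close enough to $1$. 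Then Theorem~\ref{equilibrium1}(1) yields the unique equilibrium measure $\mu_t$ within $\mathcal{M}_L(f,Y)$.

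For statement (3), I would invoke Theorem~\ref{h-equilibrium}: once Condition (L1) is added, and noting that \eqref{SN} is exactly Condition (L2) applied to $S_n$ (hence a fortiori to $S_n^*\le S_n\le e^{hn}$), every ergodic $\mu$ with $\mu(W)>0$ and $h_\mu(f) > h$ is liftable, so the supremum over $\mathcal{M}_L(f,Y)$ coincides with the supremum over all ergodic $\mu$ with $\mu(Y)=1$ and $h_\mu(f)>h$. It remains to check that $h_{\mu_t}(f) > h$ so that $\mu_t$ itself falls in this class: since $\mu_t$ maximizes $h_\mu(f) + \int\varphi_t\,d\mu$ and $\mu_1$ is SRB with $h_{\mu_1}(f) = -\int\varphi_1\,d\mu_1 > h$, a comparison of the variational expressions at $t$ near $1$ (using that $\varphi_t$ depends continuously on $t$ in the relevant norm and that the map $t\mapsto h_{\mu_t}(f)$ is lower semicontinuous) forces $h_{\mu_t}(f) > h$ for $t$ in a one-sided neighborhood of $1$; shrinking $t_0$ if necessary gives this for all $t_0 < t < 1$. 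Theorem~\ref{h-equilibrium} then delivers the claimed global variational characterization.

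The main obstacle I anticipate is statement (2), specifically pinning down the threshold $t_0$: one must simultaneously control the convergence of the series in (P3) and (P4) \emph{and} the finiteness and continuity of the pressure functions $P_G(\Phi_t)$, $P_G(\Phi_t^+)$ and $P_L(\varphi_t)$ as functions of $t$, and show that the strict inequality $h < -\int\varphi_1\,d\mu_1$ survives the perturbation from $t=1$ to $t<1$. This requires a careful estimate relating $\sup_{\Lambda_i^s}\J F$ (or better, its integral against the reference measure) to $\tau_i$ — essentially a uniform hyperbolicity lower bound on the induced map — which is the technical heart of the argument; everything else is an application of the already-established Theorems~\ref{equilibrium1} and \ref{h-equilibrium}.
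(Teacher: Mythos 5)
Your overall skeleton matches the paper's: construct the inducing scheme via Proposition~\ref{isyd}, verify (P1)--(P4) for $\varphi_t$, and then invoke Theorems~\ref{equilibrium1} and \ref{h-equilibrium}. Statement (1) is handled in the paper by Lemma~\ref{pres-est}, which computes $P_G(\Phi_1)=0$ directly from the Gurevich pressure sum using the volume estimates, rather than by appealing to Ruelle's inequality, but your route there is acceptable. The problem is in statement (2), precisely at the point you yourself flag as the ``technical heart.''

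Your proposed mechanism for (P3)/(P4) --- a lower bound $\sup_{\Lambda_i^s}\J F\ge e^{c\tau_i}$ coming from ``uniform hyperbolicity of the induced map'' --- is not available in this setting and is not what makes the argument work. Condition (Y3) gives uniform expansion \emph{per application of $F$}, not per unit of inducing time; an excursion of length $\tau_i$ may accumulate essentially no expansion beyond a single definite factor (this is exactly the situation near the neutral fixed point of the Katok map and near the tangency for the H\'enon family). The paper's substitute is Lemma~\ref{cylindersize}, the bounded-distortion duality $\mu_\gamma(\gamma\cap\Lambda_i^s)\asymp(\J F)^{-1}$, which controls $\sum_{\tau_i=n}(\J F(x))^{-1}$ \emph{in total} by $\mu_\gamma(\gamma\cap\Lambda)$. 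To pass from this average bound to the sum of $t$-th powers with $t<1$, the paper applies Jensen's inequality,
$$
\sum_{\tau_i=n}\mu_\gamma(\gamma\cap\Lambda_i^s)^t\le S_n^{\,1-t}\Bigl(\sum_{\tau_i=n}\mu_\gamma(\gamma\cap\Lambda_i^s)\Bigr)^{t}\le e^{(1-t)hn}\,\mu_\gamma(\gamma\cap\Lambda)^t,
$$
and combines it with the explicit affine lower bound $P_L(t)\ge(t-1)\int\varphi_1\,d\mu_1$ (obtained simply by testing the variational expression with $\mu_1$, Lemma~\ref{pres-est}), so that the exponent in the (P4) series becomes $(1-t)\bigl(h+\int\varphi_1\,d\mu_1\bigr)+\varepsilon<0$. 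This is the one place where the hypothesis $h<-\int\varphi_1\,d\mu_1$ enters, and it also replaces your ``continuity/convexity of the pressure in $t$'' perturbation argument by an explicit formula for $t_0$ (for $t<0$ one uses instead the upper bound $\J(F)\le\lambda_1^{\tau_i}$ from Lemma~\ref{cylindersize}, which yields $t_0=\frac{h+\int\varphi_1 d\mu_1}{\log\lambda_1+\int\varphi_1 d\mu_1}$). Without the Jensen step your verification of (P4) does not close. Your statement (3) is essentially the paper's (apply Theorem~\ref{h-equilibrium}, noting $S_n^*\le S_n\le e^{hn}$), though the required check $h_{\mu_t}(f)>h$ is more cleanly obtained from Ruelle's inequality together with $P_L(t)\ge(t-1)\int\varphi_1\,d\mu_1$ than from semicontinuity of $t\mapsto h_{\mu_t}(f)$.
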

The proof of this theorem occupies the rest of this section. 
\begin{proof}
In view of Theorem~\ref{equilibrium1} it is sufficient to prove that the potential function $\varphi_t$ satisfies conditions (P1)--(P4) for 
$t_0<t< 1$ with some $t_0<0$. Since 
$\overline{\varphi}_t(x)=-t\log\J F(x)$, Condition (P1) is straightforward and in fact, holds for $\varphi_t+c_t$ for any constant $c_t\in\R$. Lemma~\ref{localholder} below implies that $\varphi_t+c_t$ satisfies Condition (P2) for any constant $c_t\in\R$. It follows from Lemma~\ref{FGP} below that for all $t\in\mathbb{R}$ there exists some $c_t\in\R$ such that $\varphi_t+c_t$ satisfies Condition (P3). The existence of the measure $\mu_1$ in Statement (1) is a well known result (see \cite{You98}). We prove it in view of our construction in Lemma~\ref{pres-est}. The fact that $\mu_1$ is an SRB measure follows from \cite{LedStr82}. Lemma~\ref{interval} shows that under Condition~\eqref{SN} there exists some $t_0<0$ such that $\varphi_t$ (and hence $\varphi_t+c_t$) with $t_0<t<1$ satisfies Condition (P4). Since $\varphi_t$ and $\varphi_t+c_t$  are cohomologous they admit the same equilibrium measures. Statement (2) follows from Theorem~\ref{equilibrium1}. Statement (3) then follows from Theorem~\ref{h-equilibrium}. This completes the proof of Theorem \ref{geom_poten}.
\end{proof}

\begin{lem}\label{cylindersize}
There are constants $C_1>0$ and $\lambda_1>1$ such that for any 
$(a_0,\dots,a_{n-1})\in S^n$, any 
$x\in scl(\Lambda_{a_0, \dots, a_{n-1}})$ (see \eqref{cylinder}), and 
$\gamma\in \Gamma^u$,
$$
\begin{aligned}
C_1^{-1}(\J (F^n)(x))^{-1}\le&
\mu_{\gamma}(\gamma\cap\Lambda_{a_0, \dots,a_{n-1}})\\
=&
\mu_{\gamma}(\gamma\cap scl(\Lambda_{a_0, \dots,a_{n-1}}))\le C_1(\J (F^n)(x))^{-1},
\end{aligned}
$$
$$
\J (F^n)(x)\le \lambda_1^{\tau_{a_0}+\cdots+\tau_{a_{n-1}}}.
$$
\end{lem}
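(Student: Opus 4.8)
My plan is to reduce the lemma to two standard ingredients: a uniform bounded distortion estimate for the unstable Jacobians $\J(F^n)$ along cylinders, and the uniform comparability of the leaf volumes $\mu_\gamma(\gamma\cap\Lambda)$ as $\gamma$ ranges over $\Gamma^u$. I would first dispatch the elementary bound $\J(F^n)(x)\le\lambda_1^{\tau_{a_0}+\cdots+\tau_{a_{n-1}}}$. Since $E^u$ is $Df$-invariant, the chain rule gives $\J F(z)=\prod_{j=0}^{\tau(z)-1}\J f(f^jz)$ and hence $\J(F^n)(x)=\prod_{k=0}^{n-1}\prod_{j=0}^{\tau_{a_k}-1}\J f\bigl(f^j(F^kx)\bigr)$. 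Setting $\lambda_1:=\max\{e,\ \sup_{z\in M}\|Df_z\|^{\dim D^u}\}>1$ and using $\J f(w)=|\det(Df|E^u(w))|\le\|Df_w\|^{\dim D^u}\le\lambda_1$ at every point of the orbit segments above yields the claim at once; this argument only uses that $x$ has a well defined $F^n$-orbit through the $\Lambda^s_{a_k}$, so it applies verbatim on the $s$-closure.

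The core of the proof is the distortion estimate: there is $C_0\ge1$, independent of $n$ and of $(a_0,\dots,a_{n-1})$, with $C_0^{-1}\le\J(F^n)(x)/\J(F^n)(z)\le C_0$ for all $x,z\in scl(\Lambda_{a_0,\dots,a_{n-1}})$. Using the hyperbolic product structure I would connect $x$ and $z$ through $w:=\gamma^s(z)\cap\gamma^u(x)$, which again lies in $scl(\Lambda_{a_0,\dots,a_{n-1}})$, so that (Y3), (Y4) hold at $w$ (they extend to the $s$-closure). On the one hand $F^kw\in\gamma^s(F^kz)$ by the invariance in (Y1a), so (Y4a), applied to each factor with $n$ replaced by $k$, gives
\[
\Bigl|\log\frac{\J(F^n)(w)}{\J(F^n)(z)}\Bigr|\le\sum_{k=0}^{n-1}\Bigl|\log\frac{\J F(F^kw)}{\J F(F^kz)}\Bigr|\le\sum_{k=0}^{n-1}c\beta^{k}\le\frac{c}{1-\beta}.
\]
On the other hand $w\in\gamma^u(x)$ and both points carry the itinerary $(a_0,\dots,a_{n-1})$, so (Y4b) (with their $n$ equal to $n-1$), summed over the $n$ factors, gives $|\log(\J(F^n)(x)/\J(F^n)(w))|\le c/(1-\beta)$. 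Taking $C_0:=\exp(2c/(1-\beta))$ finishes this step.

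Next I would establish the equality $\mu_\gamma(\gamma\cap\Lambda_{a_0,\dots,a_{n-1}})=\mu_\gamma(\gamma\cap scl(\Lambda_{a_0,\dots,a_{n-1}}))$ for every $\gamma\in\Gamma^u$: both sets are unions of entire stable leaves, and a point of the $s$-closure fails to lie in the cylinder only if its $F^k$-image lies in $scl(\Lambda^s_{a_k})\setminus\Lambda^s_{a_k}$ for some $0\le k\le n-1$; by the Markov property (Y1b) the map $F^k$ restricts on the relevant cylinder to a leafwise diffeomorphism onto an unstable disk (it is injective there by (Y3b)), so it maps and pulls back $\mu_\gamma$-null subsets of unstable leaves to $\mu_\gamma$-null sets, while (Y2) says each $scl(\Lambda^s_{a_k})\setminus\Lambda^s_{a_k}$ meets every unstable leaf in a null set. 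For the two-sided estimate, fix $\gamma\in\Gamma^u$. By (Y1b), $F^n$ carries $\gamma\cap\Lambda_{a_0,\dots,a_{n-1}}$ bijectively onto $\gamma'\cap\Lambda$ (up to a leaf-null set) with $\gamma':=\gamma^u(F^nx)\in\Gamma^u$, and it is injective there by (Y3b); the change of variables formula for leaf volumes gives $\mu_{\gamma'}(\gamma'\cap\Lambda)=\int_{\gamma\cap\Lambda_{a_0,\dots,a_{n-1}}}\J(F^n)(z)\,d\mu_\gamma(z)$, so applying the distortion bound to $z$ and the given point $x$ yields
\[
C_0^{-1}\,\J(F^n)(x)\,\mu_\gamma\bigl(\gamma\cap\Lambda_{a_0,\dots,a_{n-1}}\bigr)\le\mu_{\gamma'}(\gamma'\cap\Lambda)\le C_0\,\J(F^n)(x)\,\mu_\gamma\bigl(\gamma\cap\Lambda_{a_0,\dots,a_{n-1}}\bigr).
\]
Finally, $\mu_{\gamma'}(\gamma'\cap\Lambda)$ is bounded above and below by positive constants independent of $\gamma'$: the upper bound is immediate since $\gamma'$ is a disk of bounded leaf volume, and the lower bound follows from (Y0) together with Proposition~\ref{Jacobian}, which shows that the stable holonomies between unstable leaves are absolutely continuous with Jacobian uniformly bounded away from $0$ and $\infty$, so $\mu_{\gamma'}(\gamma'\cap\Lambda)$ is comparable, with a uniform constant, to $\mu_{\gamma_0}(\gamma_0\cap\Lambda)>0$ for a fixed reference leaf. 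Absorbing $C_0$ and these constants into a single $C_1$ completes the proof.

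I expect the distortion estimate to be essentially forced by (Y4); the genuinely delicate bookkeeping lies in the passage to the $s$-closure, namely verifying that $F^n$ restricts to a leafwise bijection of the cylinder onto $\gamma'\cap\Lambda$ and that every exceptional set produced by taking $s$-closures is $\mu_\gamma$-null on each unstable leaf — which is precisely what Condition (Y2), combined with the leafwise smoothness of $F$, is designed to supply.
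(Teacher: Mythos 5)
Your proof is correct and follows essentially the same route as the paper's: the change of variables $\mu_{\gamma'}(\gamma'\cap\Lambda)=\int_{\gamma\cap\Lambda_{a_0,\dots,a_{n-1}}}\J(F^n)\,d\mu_\gamma$, bounded distortion from (Y4), uniform two-sided bounds on $\mu_{\gamma'}(\gamma'\cap\Lambda)$ via (Y0) and Proposition~\ref{Jacobian}, Condition (Y2) for the passage to the $s$-closure, and a chain-rule bound for the last inequality. The only difference is that you are somewhat more careful than the paper in the distortion step, using (Y4)(a) along stable leaves (through the intermediate point $w=\gamma^s(z)\cap\gamma^u(x)$) to handle a point $x$ of the $s$-closure that need not lie on the chosen leaf $\gamma$, whereas the paper quotes only (Y4)(b) for points on a common unstable leaf and leaves that detail implicit.
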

\begin{proof}
Observe that $F^n$ maps $\gamma\cap\Lambda_{a_0, \dots,a_{n-1}}$ homeomorphically onto $\gamma'\cap \Lambda$. Hence, 
$$
\mu_{\gamma'}(\gamma'\cap\Lambda)=\int_{\gamma\cap \Lambda_{a_0, \dots,a_{n-1}}}\J (F^n)(x) \,d\mu_\gamma(x).
$$
By Condition (Y0) and Proposition~\ref{Jacobian}, there exists a constant $A>0$ (which does not depend on the choice of $\gamma'$) such that
$$
A^{-1}\le\frac{\mu_{\gamma'}(\gamma'\cap\Lambda)}{\mu_{\gamma}(\gamma\cap\Lambda)}\le A.
$$
For $x,y\in \gamma\cap scl(\Lambda_i^s)$ Condition (Y4)(b) yields that
$$
B^{-1}\le \frac{\J (F^n)(x)}{\J (F^n)(y)}\le B
$$
for some $B>0$. This together with Condition (Y2) proves the first part. The last inequality follows by setting   $\lambda_1:=\max_{x\in\overline{\Lambda}}\J f(x)$ and observing that this maximum is finite due to the continuity of the family of unstable disks over the compact set $\overline{K^s}$.
\end{proof}
\begin{lem}\label{localholder}
The potential $\varphi_t$  satisfies Condition $(P2)$ for all $t\in {\mathbb R}$.
\end{lem}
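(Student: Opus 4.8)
The plan is to show that the $n$-variation $V_n(\Phi_t)$ of the coded potential $\Phi_t = \overline{\varphi}_t\circ\pi$ decays geometrically in $n$, which is exactly Condition (P2). Recall that $\overline{\varphi}_t(x) = -t\log \J F(x)$ and that the coding map $\pi$ identifies $S^\Z$ with (the natural extension of) $W$ via \eqref{def:pi}. So if $\underline a, \underline a'$ lie in a common cylinder $\mathcal{C}_{-n+1}[b_{-n+1},\dots,b_{n-1}]$, their images $x = \pi(\underline a)$ and $y = \pi(\underline a')$ agree on $2n-1$ coordinates of symbolic past and future, and I need to bound $|\log \J F(x) - \log \J F(y)|$.

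First I would split the estimate into a "stable" part and an "unstable" part using the hyperbolic product structure of $\Lambda$. Since $x$ and $y$ have the same symbols $b_0,\dots,b_{n-1}$ going forward, the point $y$ (or rather its unstable-disk representative) lies in $\Lambda_{b_0,\dots,b_{n-1}}$ together with $x$; since they share $b_{-n+1},\dots,b_{-1}$ going backward, they also lie in a common $u$-subset $ucl(\Lambda_{b_{-n+1},\dots,b_{-1},b_0})$. Introduce the holonomy-corrected intermediate point $z = \gamma^s(x)\cap\gamma^u(y)$ as in the proof of Proposition~\ref{isyd}. Then
\[
|\log \J F(x) - \log \J F(y)| \le |\log \J F(x) - \log \J F(z)| + |\log \J F(z) - \log \J F(y)|.
\]
The first term compares two points on the same unstable disk that share $n$ forward symbols: applying Condition (Y4)(b) with $k = n$ (using that $F^j(x), F^j(z)$ lie in the same $\Lambda^s_{b_j}$ for $0\le j\le n-1$, and that $\J F$ is Hölder along unstable disks so the "base" discrepancy at time $n$ is controlled) gives a bound of order $c\beta^n$. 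The second term compares two points on the same stable disk — here I would use Condition (Y4)(a) together with the contraction Condition (Y3)(a) along stable leaves, exploiting that $F^{-j}(z)$ and $F^{-j}(y)$ stay in $\Lambda^s_{b_{-j}}$ for $0\le j\le n-1$; this yields a bound of order $c\beta^n$ as well. Combining, $V_n(\Phi_t) \le |t|\cdot 2c\beta^n$, so Condition (P2) holds with $r = \beta$ and $C = 2|t|c$ (and trivially the same bound persists after adding any constant $c_t$, since constants have zero variation).

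The main obstacle I anticipate is the bookkeeping in reconciling the symbolic cylinder condition with the geometric "same $n$ symbols forward / backward" statements for the points $x,z,y$ individually — in particular making sure that $z$ genuinely lies in the correct nested $s$-subset $\Lambda_{b_0,\dots,b_{n-1}}$ and nested $u$-subset, which requires invoking the Markov property (Y1b) and the fact (noted after \eqref{cylinder}) that these form nested families. One technical subtlety is that $\pi(\underline a)$ might only lie in $\overline{J_{b_0}}$ rather than $J_{b_0}$; this is handled by the extension of $F$, and of Conditions (Y3)–(Y4), to the $s$-closures $scl(\Lambda^s_i)$ with the same constants, as remarked in the paper. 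A second, smaller point is that one should use the uniform bound $\lambda_1 = \max_{\overline\Lambda}\J f$ to guarantee $\log\J F$ is bounded where needed so the Hölder-in-the-base term at time $n$ in (Y4)(b) is genuinely $O(\beta^n)$ and not merely finite; this is exactly the observation made at the end of Lemma~\ref{cylindersize}. Once these identifications are in place the geometric decay is immediate from (Y3) and (Y4).
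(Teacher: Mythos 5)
Your overall route is the paper's: take $x=\pi(\underline a)$, $y=\pi(\underline a')$ in the set $M_n$ of \eqref{mw}, insert an intermediate point on the intersection of a stable and an unstable disk, and bound each of the two resulting differences of $\log\J F$ by $c\beta^n$ using the distortion conditions (Y4). But as written your argument is internally inconsistent, and both applications of (Y4) fail their hypotheses. With your choice $z=\gamma^s(x)\cap\gamma^u(y)$, the pair $(x,z)$ lies on a common \emph{stable} disk and the pair $(z,y)$ on a common \emph{unstable} disk; you assert the opposite, and accordingly apply (Y4)(b) to $(x,z)$ (which requires $z\in\gamma^u(x)$ --- false in general) and (Y4)(a) to $(z,y)$ (which requires $y\in\gamma^s(z)$ --- also false). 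The correct assignment, which is what the paper does, is the reverse: for $(z,y)\subset\gamma^u(y)$ use (Y4)(b) with $k=n$, legitimate because the shared forward symbols $b_0,\dots,b_{n-1}$ give $F^j(z),F^j(y)\in\Lambda^s_{b_j}$; for $(x,z)\subset\gamma^s(x)$ use (Y4)(a), applied not at $x,z$ themselves but at the preimages $F^{-n}x$ and $F^{-n}z$, which lie on a common stable leaf precisely because of the shared backward symbols $b_{-n+1},\dots,b_{-1}$, so that $\log\J F(x)-\log\J F(z)=\log\J F(F^n(F^{-n}x))-\log\J F(F^n(F^{-n}z))$ is bounded by $c\beta^n$. (Alternatively, keep your assignment of the conditions and redefine the intermediate point as $\gamma^u(x)\cap\gamma^s(y)$.)

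This also corrects a related misattribution of where the decay comes from: for the stable pair, applying (Y4)(a) directly at the points gives only the constant $c\beta^0$, so the $\beta^n$ there is earned by the matching \emph{past}, while the unstable pair earns its $\beta^n$ from the matching \emph{future}. Once the roles are restored, the two bounds add to $V_n(\Phi_t)\le 2c|t|\beta^n$ exactly as you conclude; your remarks about extending $F$ and (Y3)--(Y4) to the $s$-closures are the right way to handle boundary points, and the appeals to $\lambda_1$ and to the contraction (Y3)(a) are not needed.
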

\begin{proof}
To prove Condition (P2) consider 
$(a_{-n+1},\dots, a_{n-1})\in S^{2n-1}$ and $x,y \in M_n$ (see \eqref{mw}). If $z=\gamma^s(x)\cap\gamma^u(y)$ then 
$F^{-n}(z)\in\gamma^s(F^{-n}(x))$ and Condition (Y4)(a) yields
\begin{multline*}
|\bar\varphi_t(x)-\bar\varphi_t(z)| =|t|
\left|\log\frac{\J F(x)}{\J F(z)} \right|
=|t|\left|\log\frac{\J F(F^n(F^{-n}x))}{\J F(F^n(F^{-n}z))} \right| \le c|t|\beta^n.
\end{multline*}
A similar argument that uses (Y4)(b) instead of (Y4)(a) yields
$$
|\bar\varphi_t(y)-\bar\varphi_t(z)|\le c|t|\beta^n
$$
and the desired result follows since $x$ and $y$ were chosen arbitrarily.
\end{proof}

\begin{lem}\label{FGP}
The potential function $\varphi_t+c$ satisfies Condition $(P3)$ for all 
$c\le 0$ and all $t\ge 1$. Additionally, if Condition~\eqref{SN} is satisfied, then for all $t\in {\mathbb R}$ there exists $c_t\in\R$ such that 
$\varphi_t+c$ satisfies Condition $(P3)$ for all $c< c_t$.
\end{lem}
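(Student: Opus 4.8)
The plan is to unwind Condition~(P3) for the normalized family $\varphi_t+c$ and then estimate the resulting series with the distortion bounds of Lemma~\ref{cylindersize}. The induced potential of a constant $c$ is $c\tau$, so on an element $J_i:=\Lambda_i^s\cap W\in S$ the induced potential of $\varphi_t+c$ is
\[
\overline{\varphi_t+c}(x)=\overline{\varphi}_t(x)+c\tau_i=-t\log\J F(x)+c\tau_i,
\]
and hence Condition~(P3) for $\varphi_t+c$ is precisely the finiteness of
\[
\Sigma(t,c):=\sum_{i\in\N}\ \sup_{x\in J_i}(\J F(x))^{-t}\,e^{c\tau_i}.
\]
I would fix once and for all a disk $\gamma\in\Gamma^u$, set $m_i:=\mu_\gamma(\gamma\cap\Lambda_i^s)$ and $M:=\mu_\gamma(\gamma\cap\Lambda)$, and record that $M<\infty$ (the disk $\gamma$ is compact) and that $\sum_i m_i\le M$ (the $\Lambda_i^s$ are pairwise disjoint by Condition~(Y1)). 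Lemma~\ref{cylindersize}, read for one-symbol cylinders, then gives, for every $x\in scl(\Lambda_i^s)\supset J_i$, the two pointwise bounds
\[
(\J F(x))^{-1}\le C_1 m_i\qquad\text{and}\qquad\J F(x)\le\lambda_1^{\tau_i}.
\]
Together with the monotonicity of $s\mapsto s^{r}$ on $(0,\infty)$ these are the only estimates I will use.

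For the first assertion, take $t\ge1$ and $c\le0$. The first bound gives $\sup_{x\in J_i}(\J F(x))^{-t}\le(C_1 m_i)^{t}$, and since $0\le m_i\le M$ and $t-1\ge0$ one has $(C_1 m_i)^{t}=C_1^{t}m_i\,m_i^{t-1}\le C_1^{t}M^{t-1}m_i$. As $e^{c\tau_i}\le1$, summation over $i$ yields $\Sigma(t,c)\le C_1^{t}M^{t-1}\sum_i m_i\le(C_1M)^{t}<\infty$, which is Condition~(P3); this step uses neither~\eqref{SN} nor Conditions~(I3)--(I4).

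For the second assertion, assume~\eqref{SN}, i.e. $S_n:=\sharp\{J\in S:\tau(J)=n\}\le e^{hn}$, and regroup the series by the value of the inducing time:
\[
\Sigma(t,c)=\sum_{n\ge1}e^{cn}\sum_{J\in S(n)}\sup_{x\in J}(\J F(x))^{-t}.
\]
If $t\ge0$, then $(\J F(x))^{-t}=\bigl((\J F(x))^{-1}\bigr)^{t}\le(C_1M)^{t}$ uniformly in $x$ and $i$, so $\Sigma(t,c)\le(C_1M)^{t}\sum_{n\ge1}S_n\,e^{cn}\le(C_1M)^{t}\sum_{n\ge1}e^{(h+c)n}$, which converges for $c<c_t:=-h$. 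If $t<0$, then $(\J F(x))^{-t}=(\J F(x))^{|t|}\le\lambda_1^{|t|\tau_i}$, so $\Sigma(t,c)\le\sum_{n\ge1}S_n\,\lambda_1^{|t|n}e^{cn}\le\sum_{n\ge1}e^{(h+|t|\log\lambda_1+c)n}$, which converges for $c<c_t:=-h-|t|\log\lambda_1$. In both cases $\varphi_t+c$ satisfies Condition~(P3) for all $c<c_t$, as claimed.

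I do not expect a serious obstacle: the computation is bookkeeping with the distortion estimates of Lemma~\ref{cylindersize}, in the same spirit as the corresponding arguments of \cite{PesSen08}. The only point requiring care is that no single estimate handles all $t\in\R$ — the lower Jacobian bound $(\J F)^{-1}\le C_1 m_i$ controls $(\J F)^{-t}$ only for $t\ge0$, whereas for $t<0$ one must use the upper bound $\J F\le\lambda_1^{\tau}$, which produces the extra factor $\lambda_1^{|t|\tau(J)}$. The growth hypothesis~\eqref{SN} is precisely what offsets that factor (and the count of elements of a given inducing time), and it is what allows one to pass beyond the range $t\ge1$, $c\le0$, where the mere summability $\sum_i m_i\le M$ of the leaf volumes already suffices, to arbitrary $t$ at the price of taking $c$ sufficiently negative.
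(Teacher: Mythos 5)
Your proof is correct and follows essentially the same route as the paper's: for $t\ge 1$, $c\le 0$ it reduces Condition (P3) to the summability of the leaf volumes $\mu_\gamma(\gamma\cap\Lambda_i^s)$ via the lower distortion bound of Lemma~\ref{cylindersize}, and under \eqref{SN} it regroups by inducing time, using the uniform bound $(\J F)^{-1}\le C_1\mu_\gamma(\gamma\cap\Lambda_i^s)$ for $t\ge 0$ and $\J F\le\lambda_1^{\tau}$ for $t<0$, exactly as in the text (your two-case split merely merges the paper's $t\ge1$ and $0\le t<1$ cases, and your explicit factor $C_1^tM^{t-1}$ tidies a constant the paper elides).
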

\begin{proof}
It suffices to prove the existence of some $c_t\in\R$ such that for all 
$c< c_t$ one has
$$
\sum_{i=1}^\infty\sup_{x\in\overline{\Lambda_i^s}}|
\J F(x)|^{-t}e^{c\tau_i}<\infty.
$$
For $t\ge 1$ and all $c\le c_t=0$ Lemma~\ref{cylindersize} and (Y5) yield
$$
\sum_{i=1}^\infty\sup_{x\in scl(\Lambda_i^s)}(\J F(x))^{-t}e^{c\tau_i}
\le C_1\sum_{i=1}^\infty\mu_\gamma(\gamma\cap\Lambda_i^s)^t
\le C_1\sum_{i=1}^\infty\mu_\gamma(\gamma\cap\Lambda_i^s)
<\infty.
$$
For $0\le t<1$ Lemma~\ref{cylindersize} and \eqref{SN} yield for any $c<c_t=-h$ 
$$
\sum_{i=1}^\infty\sup_{x\in scl(\Lambda_i^s)}(\J F(x))^{-t}e^{c\tau_i}
\le\mu_\gamma(\gamma\cap \Lambda)^t \sum_{i=1}^\infty e^{c\tau_i}
\le\mu_\gamma(\gamma\cap \Lambda)^t \sum_{n=1}^\infty e^{(c +h )n}
<\infty.
$$
For $t\le 0$ and $c_t=t\log\lambda_1-h$ Lemma~\ref{cylindersize}  and \eqref{SN} yields
$$
\sum_{i=1}^\infty\sup_{x\in scl(\Lambda_i^s)}(\J F(x))^{-t}e^{c\tau_i} \le
\sum_{n=1}^\infty e^{(c-t\log\lambda_1 +h )n}
<\infty.
$$
\end{proof}

\begin{lem}\label{pres-est}
Let $P_L(t):=P_L(\varphi_t)$. Then $P_L(1)=0$, and there exists a unique measure $\mu_1$ such that
$$
h_{\mu_1}(f)+\int_Y\varphi_1 \,d\mu_1 = P_L(1).
$$
Moreover, for all $t\in\R$ one has
$$
P_L(t) \ge (t-1)\int_Y\varphi_1\,d\mu_1.
$$
\end{lem}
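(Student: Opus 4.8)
The plan is to establish, in turn, that $P_L(1)\le 0$; that the SRB measure of $f$ realizes equality, so $P_L(1)=0$ with equilibrium measure $\mu_1$; that $\mu_1$ is unique in $\mathcal{M}_L(f,Y)$; and finally that $P_L(t)\ge(t-1)\int_Y\varphi_1\,d\mu_1$, which will fall out by testing the variational definition of $P_L(t)$ against $\mu_1$. For the first step I would combine Ruelle's inequality with the hyperbolic product structure of $ucl(\Lambda)\supset W$. Let $\mu\in\mathcal{M}_L(f,Y)$ with induced measure $\nu=\mathcal{L}^{-1}(\mu)\in\mathcal{M}(F,W)$. Condition (Y3) makes $F$ uniformly contract the stable disks $\gamma^s$ and uniformly expand the unstable disks $\gamma^u$, and since $\dim\gamma^u+\dim\gamma^s=\dim M$, the Oseledets splitting of $\nu$ for $F$ is exactly $T\gamma^u\oplus T\gamma^s$, with strictly positive exponents on $T\gamma^u$ and strictly negative ones on $T\gamma^s$; the same splitting holds for $\mu$ and $f$ (exponents rescaled by $Q_\nu^{-1}$). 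Using the chain rule for $\J$ along orbits together with Proposition~\ref{abramovkac}, the sum of the positive Lyapunov exponents of $\mu$ equals $\int\log\J f\,d\mu$, so Ruelle's inequality yields $h_\mu(f)+\int\varphi_1\,d\mu\le 0$, and taking the supremum, $P_L(1)\le 0$.

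For equality I would invoke the standard Young-tower construction (\cite{You98}): the bounded distortion estimates of Lemma~\ref{cylindersize} and Proposition~\ref{Jacobian} produce an $F$-invariant probability $\nu_1$ on $W$ whose conditionals on the $\gamma^u$ are absolutely continuous with respect to $\mu_{\gamma^u}$; by Condition (Y5), $Q_{\nu_1}<\infty$, so $\mu_1:=\mathcal{L}(\nu_1)\in\mathcal{M}_L(f,Y)$. Factoring $F$ along stable disks gives a full-branch Gibbs--Markov map $\overline F$ whose unique absolutely continuous invariant measure satisfies Rokhlin's formula $h(\overline F)=\int\log\J\overline F$ (equivalently, the Ledrappier--Strelcyn entropy formula \cite{LedStr82}); since the stable direction carries no entropy this gives $h_{\nu_1}(F)+\int_W\overline{\varphi}_1\,d\nu_1=0$, which by Proposition~\ref{abramovkac} is precisely $h_{\mu_1}(f)+\int\varphi_1\,d\mu_1=0$. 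Hence $P_L(1)=0$ and $\mu_1$ is an equilibrium measure; by \cite{LedStr82} it is the SRB measure of $f$.

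For uniqueness, $P_L(1)=0$ gives $\varphi_1^+=\overline{\varphi}_1$ and $\Phi_1^+=\Phi_1$. By Lemmas~\ref{localholder} and \ref{FGP} (with $c=0$), $\varphi_1$ satisfies Conditions (P1)--(P3), so Theorem~\ref{conditions} gives that $\Phi_1$ has strongly summable variations, $\sup\Phi_1<\infty$, $P_G(\Phi_1)<\infty$ and $Q_{\nu_{\varphi_1^+}}<\infty$; combined with Proposition~\ref{abramovkac} and $h_{top}(f)<\infty$, this forces $h_{\nu_{\Phi_1^+}}(\sigma)<\infty$. A short variational argument then gives $P_G(\Phi_1^+)=P_G(\Phi_1)=0$: lifting $\nu_1$ through the bijection $\pi\colon\check S\to W$ produces a $\sigma$-invariant measure with $h+\int\Phi_1=0$, whence $P_G(\Phi_1)\ge 0$ by the variational principle of Theorem~\ref{gibbs2}; conversely the Gibbs measure $\nu_{\Phi_1}$ has finite $Q$ and finite entropy, so pushing it down by $\mathcal{L}$ yields a liftable measure with $h+\int\varphi_1=Q^{-1}P_G(\Phi_1)\le P_L(1)=0$, whence $P_G(\Phi_1)\le 0$. (Alternatively $P_G(\Phi_1)=0$ follows directly from Lemma~\ref{cylindersize}, the periodic-point sum in the Gurevich pressure through a fixed symbol $b$ being comparable, uniformly in $n$, to $\mu_{\gamma^u}(\gamma^u\cap\Lambda_b^s)>0$ once Condition (Y2) is used to discard the $\mu_{\gamma^u}$-null set of escaping points.) All hypotheses of Theorem~\ref{liftgibbs} now hold; part (d) (or, absent Condition (I3), uniqueness of the absolutely continuous invariant measure of $\overline F$) gives a unique equilibrium ergodic measure in $\mathcal{M}_L(f,Y)$, and since by part (c) the $F$-equilibrium state of $\overline{\varphi}_1$ is unique while $\nu_1$ is one such state, $\nu_1=\nu_{\varphi_1^+}$ and $\mu_1=\mathcal{L}(\nu_1)$ is that unique measure.

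Finally, for every $t\in\R$ one has $\varphi_t=t\varphi_1$ and $h_{\mu_1}(f)=-\int\varphi_1\,d\mu_1$, so using $\mu_1\in\mathcal{M}_L(f,Y)$ as a test measure,
$$
P_L(t)\ge h_{\mu_1}(f)+\int_Y\varphi_t\,d\mu_1=(t-1)\int_Y\varphi_1\,d\mu_1.
$$
The main obstacle is the equality $h_{\mu_1}(f)+\int\varphi_1\,d\mu_1=0$: it rests on identifying the abstract unstable disks $\gamma^u$ with genuine unstable manifolds of $f$ and on the entropy formula for the Gibbs--Markov quotient of $F$ (equivalently, Pesin's entropy formula for $\mu_1$); the bookkeeping of the (Y2) null sets in the computation $P_G(\Phi_1)=0$ is a secondary technical point.
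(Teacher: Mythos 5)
Your proposal is correct, but it reaches the two key facts by a genuinely different route than the paper. For the upper bound $P_L(1)\le 0$ the paper never touches Lyapunov exponents: it computes $P_G(\Phi_1)=0$ directly from the definition of the Gurevich pressure, using Lemma~\ref{cylindersize} to replace the periodic-orbit sums $\sum(\J(F^n)(x))^{-1}$ by the leaf volumes $\mu_\gamma(\gamma\cap scl(\Lambda_{a_0,\dots,a_{n-1}}))$, which telescope to $\mu_\gamma(\gamma\cap scl(\Lambda_b^s))$; the bound $P_L(1)\le P_G(\Phi_1)$ then follows from Proposition~\ref{abramovkac}. You instead use Ruelle's inequality together with the identification of $\sum\lambda_i^{+}(\mu)$ with $\int\log\J f\,d\mu$; this is sound because (Y3) forces all exponents along $T\gamma^u$ to be positive and all those along $T\gamma^s$ to be negative for any liftable $\mu$ (and $Q_\nu<\infty$ by definition of liftability), but it does require you to actually verify that the Oseledets unstable space coincides with $E^u=T\gamma^u$ $\mu$-a.e., a smooth-ergodic-theoretic step the paper's symbolic computation bypasses entirely. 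Similarly, for the lower bound and the entropy identity $h_{\mu_1}(f)+\int\varphi_1\,d\mu_1=0$, the paper stays inside its own formalism: it builds $\nu_{\bar\varphi_1}$ as the projection of the symbolic Gibbs measure for $\Phi_1$, deduces $Q_{\nu_{\bar\varphi_1}}<\infty$ from the Gibbs property together with (Y5), and gets the identity from Theorem~\ref{gibbs2}(3) with $P_G(\Phi_1)=0$; you invoke the classical SRB construction and Rokhlin's formula for the Gibbs--Markov quotient, which is the historical argument of \cite{You98} and is correct but external to the machinery of Sections 3--4. Your direct computation of $P_G(\Phi_1)=0$ offered in parentheses is exactly the paper's argument, and the final inequality $P_L(t)\ge(t-1)\int_Y\varphi_1\,d\mu_1$ is obtained identically, by testing against $\mu_1$. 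What the paper's route buys is self-containedness and reusability (the computation $P_G(\Phi_1)=0$ feeds into Lemma~\ref{interval} and Theorem~\ref{geom_poten1}); what yours buys is a conceptual explanation of why $P_L(1)\le 0$ holds for \emph{every} liftable measure, independent of the coding.
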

\begin{proof}[Proof of the lemma]
Let $\Phi_1:=\bar\varphi_1\circ\pi$, where the coding map $\pi$ is defined in \eqref{def:pi}. For any $b\in S$ Lemma~\ref{cylindersize} yields 
\begin{eqnarray*}
P_G(\Phi_1) &=&\lim_{n\to\infty}\frac1n\log\sum_{\sigma^n(\underline{a})=\underline{a}}
\exp \sum_{i=0}^{n-1} \Phi_1^i(\underline{a}) \mathbb{I}_{[b]}(\underline{a}) \\
&=& 
\lim_{n\to\infty} \frac1n \log\sum_{F^n(x)=x\in scl(\Lambda_b^s)}(\J (F^n)(x))^{-1}\\
&=&
\lim_{n\to\infty} \frac1n\log\sum_{\substack{a_0, \cdots, a_{n}\in S \\ a_0=a_{n}=b}} \mu_{\gamma}(\gamma\cap scl(\Lambda_{a_0, \dots,a_{n-1}})) \\
&=& 
\lim_{n\to \infty}\frac1n \log \mu_\gamma(\gamma\cap scl(\Lambda_b^s))=0.
\end{eqnarray*}
The last line follows from the fact that the sum is taken over all possible cylinders $\Lambda_{b, a_1, \dots,a_{n-1}}$ of length $n$ contained in 
$\Lambda_b$. For every $\mu\in\mathcal{M}_L(f, X)$ we have $Q_{\mathcal{L}^{-1}(\mu)}<\infty$, so Proposition~\ref{abramovkac} yields $P_L(1)\le P_G(\Phi_1)= 0$.

To prove the lower bound, note that  Lemmas~\ref{localholder} and \ref{FGP} (with constant $c_1=0$) imply that  $\varphi_1$ satisfies Conditions (P2) and (P3). Since $\varphi_1$ clearly satisfies Condition (P1), Theorems~\ref{conditions} and \ref{gibbs2} imply the existence of a unique Gibbs measure $\nu_{\Phi_1}$ for $\Phi_1$. Then by Relation \eqref{eq:gibbs2}, the measure 
$\nu_{\bar{\varphi}_1}:=\pi_*\nu_{\Phi_1}$ satisfies the corresponding Gibbs property, i.e., there is $C_0>$ such that for any $n$-tuple $(b_{0}, \dots, b_{n-1})$ and any
$x\in\Lambda^s_{b_{0}\dots b_{n-1}}$ we have
$$
C_0^{-1}\le \frac{\nu_{\bar{\varphi}_1}(\Lambda^s_{b_{0}\dots b_{n-1}})}{\exp(-nP_G(\Phi_1)+(\bar{\varphi}_1)_n(x))} \le C_0.
$$
Setting $n=1$ by Lemma~\ref{cylindersize}), we find that 
$$
\nu_{\bar{\varphi}_1}(\Lambda_i^s)\le C_0 (\J F(x))^{-1}.
$$
Now Condition (Y5) yields
$$
Q_{\nu_{\bar{\varphi}_1}}=\sum_{i\in\N} \tau_i\nu_{\bar{\varphi}_1}(\Lambda_i^s)\le C_0 \sum_{i\in\N}\tau_i(\J F(x))^{-1}<\infty.
$$
By Proposition~\ref{abramovkac}, $h_{\nu_{\bar{\varphi}_1}}(F)<\infty$ and 
Theorem~\ref{gibbs2} (3) yields
$$
h_{\nu_{\bar{\varphi}_1}}(F) + \int_W \bar\varphi_1 \,d\nu_{\bar{\varphi}_1} = P_G(\Phi_1)=0.
$$
Furthermore,
\[
P_L(1)\ge h_{{\mathcal L}(\nu_{\bar{\varphi}_1})}(f) + 
\int_Y\varphi_1 \,d {\mathcal L}(\nu_{\bar{\varphi}_1}) = \frac{h_{\nu_{\bar{\varphi}_1}}(F)+\int_W \bar\varphi_1 \,d\nu_{\bar{\varphi}_1}}{Q_{\nu_{\bar{\varphi}_1}}} =0.
\]
Therefore $P_L(1)=0$. Hence $\Phi^+_1=\Phi_1$ and  $\nu_{\varphi^+_1}=\nu_{\bar\varphi_1}$. Statement (1) now follows from Theorem~\ref{equilibrium1} for $\mu_1=\mathcal{L}(\nu_{\bar\varphi_1})$.

To prove the second statement note that
\begin{multline*}
P_L(t) = \sup_{\mu\in \mathcal{M}_{L}(f, Y)}\{ h_\mu(f)+\int_Y \varphi_t\,d\mu\} \ge  h_{\mu_1}(f)+\int_Y\varphi_t\,d\mu_1\\
=h_{\mu_1}(f)+t\int_Y\varphi_1\,d\mu_1 
=(t-1)\int_Y\varphi_1\,d\mu_1
\end{multline*}
and the desired result follows.
\end{proof}
\begin{lem}\label{interval} If Condition~\eqref{SN} is satisfied, the potential $\varphi_t$ satisfies Condition $(P4)$ for all $t_0<t<1$ where 
$$
t_0:=\frac{h+\int_X\varphi_1\,d\mu_1}{\log\lambda_1+\int_X\varphi_1\,d\mu_1}<0.
$$
\end{lem}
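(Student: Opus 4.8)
The plan is to check Condition~(P4) for $\varphi_t$ directly. Since $\overline{\varphi}_t(x)=-t\log\J F(x)$ and $\varphi_t^+=\overline{\varphi}_t-P_L(t)\tau$, Condition~(P4) is equivalent to the existence of $\epsilon>0$ with
$$
\sum_{i\in\N}\tau_i\,\sup_{x\in\Lambda_i^s}(\J F(x))^{-t}\,e^{(\epsilon-P_L(t))\tau_i}<\infty .
$$
Set $A:=-\int_X\varphi_1\,d\mu_1=\int_X\log\J f\,d\mu_1>0$; hypothesis~\eqref{SN} gives $0<h<A$. Taking $n=1$ in Lemma~\ref{cylindersize} yields $\log\J F(x)\le\tau_i\log\lambda_1$ on $\Lambda_i^s$, so integrating against $\nu_{\bar\varphi_1}$ and using Proposition~\ref{abramovkac} (together with $\mathcal{L}(\nu_{\bar\varphi_1})=\mu_1$ from Lemma~\ref{pres-est}) gives $A\le\log\lambda_1$; discarding the degenerate equality case we have $A<\log\lambda_1$, so $t_0=\frac{h-A}{\log\lambda_1-A}<0$ is well defined. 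The quantitative input is the lower bound $P_L(t)\ge(t-1)\int_X\varphi_1\,d\mu_1=(1-t)A$ from Lemma~\ref{pres-est}.

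For $0\le t<1$ I would use the lower estimate in Lemma~\ref{cylindersize} (with $n=1$) to get $(\J F(x))^{-t}\le C_1^{\,t}\,\mu_\gamma(\gamma\cap\Lambda_i^s)^{t}$ for $x\in\Lambda_i^s$, and then group the series by $\tau_i=n$. There are $S_n\le e^{hn}$ indices with $\tau_i=n$ and $\sum_{i:\tau_i=n}\mu_\gamma(\gamma\cap\Lambda_i^s)\le\mu_\gamma(\gamma\cap\Lambda)=:m<\infty$, so concavity of $s\mapsto s^{t}$ (Jensen) gives $\sum_{i:\tau_i=n}\mu_\gamma(\gamma\cap\Lambda_i^s)^{t}\le S_n^{\,1-t}m^{t}\le m^{t}e^{h(1-t)n}$. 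Hence the series is dominated by $C_1^{\,t}m^{t}\sum_n n\,e^{(h(1-t)+\epsilon-P_L(t))n}$, which converges whenever $\epsilon<P_L(t)-h(1-t)$; and $P_L(t)-h(1-t)\ge(1-t)(A-h)>0$, so such an $\epsilon>0$ exists.

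For $t_0<t<0$ I would instead use the upper estimate $\J F(x)\le\lambda_1^{\tau_i}$ of Lemma~\ref{cylindersize}, so that $\sup_{x\in\Lambda_i^s}(\J F(x))^{-t}\le\lambda_1^{-t\tau_i}$ since $-t>0$. Grouping by $\tau_i=n$ and using $S_n\le e^{hn}$, the series is dominated by $\sum_n n\,e^{(h-t\log\lambda_1+\epsilon-P_L(t))n}$, which converges once $\epsilon<P_L(t)-h+t\log\lambda_1$. By the bound $P_L(t)\ge(1-t)A$ the right-hand side is positive as soon as $(1-t)A-h+t\log\lambda_1>0$, i.e. $t(\log\lambda_1-A)>h-A$; since $\log\lambda_1-A>0$ this is precisely $t>t_0$. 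Combining the two ranges, $\varphi_t$ satisfies Condition~(P4) for all $t_0<t<1$.

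The routine part is the pair of geometric-series estimates. The step that actually drives the argument is the range $0\le t<1$, where the only a priori information on the number of inducing domains with a prescribed inducing time is the subexponential bound $S_n\le e^{hn}$; one pays the ``entropy deficit'' $S_n^{\,1-t}$ against the pressure gap, and this is affordable exactly because $h<A$. The inequality $h<A\le\log\lambda_1$ --- its first half being hypothesis~\eqref{SN} and its second half coming from Lemma~\ref{cylindersize} --- is what keeps both pressure gaps positive and determines the threshold $t_0$.
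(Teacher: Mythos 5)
Your proof is correct and follows essentially the same route as the paper's: the reduction of (P4) to the series $\sum_i\tau_i\sup(\J F)^{-t}e^{(\epsilon-P_L(t))\tau_i}$, the use of Lemma~\ref{cylindersize} in its two forms (the $\mu_\gamma$-comparison for $0\le t<1$ and the $\lambda_1^{\tau_i}$ bound for $t<0$), Jensen's inequality against $S_n^{1-t}\le e^{h(1-t)n}$, and the pressure lower bound $P_L(t)\ge(1-t)A$ from Lemma~\ref{pres-est} are exactly the paper's ingredients, leading to the same threshold $t_0$. No gaps.
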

\begin{proof}
By Lemma~\ref{cylindersize}
$$
\sum_{i=1}^\infty \tau_i\sup_{x\in \Lambda_i^s}(\J F(x))^{-t} e^{-(P_L(t)-\varepsilon)\tau_i}
\le C\sum_{n=1}^\infty ne^{-(P_L(t)-\varepsilon)n}\sum_{\tau_i=n} \mu_\gamma(\gamma\cap\Lambda_i^s)^t
$$
for some constant $C>0$, so it suffices to prove that the right hand side is finite. 

For $0\le t<1$, Jensen's inequality together with Lemma~\ref{pres-est} and Condition~\eqref{SN} yield:
\begin{multline*} 
\sum_{n=1}^\infty ne^{-(P_L(t)-\varepsilon)n}\sum_{\tau_i=n} \mu_\gamma(\gamma\cap\Lambda_i^s)^t\\
\le \sum_{n=1}^\infty n e^{-(P_L(t)-\varepsilon)n} S_n^{1-t}\left( \sum_{\tau_i=n} \mu_\gamma(\gamma\cap\Lambda_i^s) \right)^t \\
\le  \left(\mu_{\gamma}(\gamma\cap\Lambda)\right)^t\sum_{n=1}^\infty n\  e^{(1-t)(h+\int_X\varphi_1\,d\mu_1)n+
\varepsilon n}<\infty
\end{multline*}
provided one chooses $0<\varepsilon<(1-t)(-\int_X\varphi_1\,d\mu_1-h)$ for $t<1$ given.
For $t<0$ Lemma~\ref{cylindersize} implies
\begin{multline*}
\sum_{n=1}^\infty ne^{-(P_L(t)-\varepsilon)n}\sum_{\tau_i=n} \mu_\gamma(\gamma\cap\Lambda_i^s)^t\le\sum_{n=1}^\infty ne^{-(P_L(t)-\varepsilon)n} S_n  \lambda_1^{n|t|} \\
\le C\sum_{n=1}^\infty n\lambda_1^{n|t|} e^{(h +\varepsilon +(1-t)\int_X\varphi_1\,d\mu_1)n} <\infty
\end{multline*}
if $h +\varepsilon+(1-t)\int_X\varphi_1\,d\mu_1< t\log\lambda_1$. Since $h<-\int_X\varphi_1\,d\mu_1\le \log\lambda_1$, this holds for
$\frac{h+\int_X\varphi_1\,d\mu_1}{\log\lambda_1+\int_X\varphi_1\,d\mu_1}<t<0$
and $\varepsilon$ sufficiently small. 
\end{proof}
We now describe some ergodic properties of the measure $\mu_t$.
\begin{thm}\label{geom_poten1}
Let $f:M\to M$ be a $C^{1+\epsilon}$ diffeomorphism of a compact smooth Riemannian manifold $M$ satisfying Conditions (Y0)-(Y5). Assume that the inducing scheme $\{S,\tau\}$ satisfies Conditions (I3), (I4) and Condition~\eqref{SN}. Then there exists $t_0<0$ such that 
\begin{enumerate}
\item for every $t_0<t\le 1$ the equilibrium measure $\mu_t$ is ergodic;
\item for every $t_0<t< 1$ $\mu_t$  has exponential decay of correlations and satisfies the CLT with respect to a class of potential functions which contains all H\"older continuous functions on $M$.
\end{enumerate}
\end{thm}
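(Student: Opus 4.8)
The plan is to derive both statements from the general thermodynamic results of Section~\ref{sec:therm-induced}, namely Theorems~\ref{equilibrium1} and~\ref{ergodic}, combined with the estimates already obtained in the proof of Theorem~\ref{geom_poten}. In particular, for $t_0<t<1$ we may freely use that $\varphi_t$ satisfies Conditions (P1)--(P4) (Lemmas~\ref{localholder}, \ref{FGP}, \ref{interval} and Theorem~\ref{conditions}), that $P_L(t)$ is finite with $P_G(\Phi^+_t)=0$, and that the equilibrium measure $\mu_t$ is the lift of the induced measure $\nu_{\varphi^+_t}$.

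For Statement~(1), when $t_0<t<1$ the ergodicity of $\mu_t$ is already part of the conclusion of Theorem~\ref{equilibrium1}(1). For the endpoint $t=1$, recall from Lemma~\ref{pres-est} that $\mu_1=\mathcal{L}(\nu_{\bar\varphi_1})$ with $Q_{\nu_{\bar\varphi_1}}<\infty$, and that $\nu_{\bar\varphi_1}=\pi_*\nu_{\Phi_1}$ is the image under the coding map of the unique ergodic Gibbs measure $\nu_{\Phi_1}$; since $\pi$ is one-to-one on the $\nu_{\Phi_1}$-full set $\check S$ (Condition (I3) and Proposition~\ref{conjugacy}), $\nu_{\bar\varphi_1}\in\mathcal{M}(F,W)$ is ergodic, and the Proposition following~\eqref{setY} then puts $\mu_1$ in $\mathcal{M}(f,Y)$, a class of ergodic measures by definition.

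For Statement~(2), by Theorem~\ref{equilibrium1}(2) (whose remaining hypotheses, the hypotheses of Theorem~\ref{liftgibbs} and the local H\"older continuity of $\bar\varphi_t$, hold by the above and by Lemma~\ref{localholder}) it suffices to show that $\nu_{\varphi^+_t}=\mathcal{L}^{-1}(\mu_t)$ has exponential tail whenever $t_0<t<1$. The idea is to feed Condition (P4), which Lemma~\ref{interval} establishes in the range $t_0<t<1$ with some $\varepsilon>0$, into the Gibbs inequality~\eqref{eq:gibbs2} for $\nu_{\varphi^+_t}$ at level $n=1$: since $P_G(\Phi^+_t)=0$, this inequality gives $\nu_{\varphi^+_t}(\Lambda^s_i)\le C_0\sup_{x\in\Lambda^s_i}\exp(\varphi^+_t(x))$, so that, using $\tau_i\ge 1$ and summing over $i$, one obtains $\sum_{m\ge1}e^{\varepsilon m}\nu_{\varphi^+_t}(\{x\in W:\tau(x)=m\})\le C_0\sum_{J\in S}\tau(J)\sup_{x\in J}\exp(\varphi^+_t(x)+\varepsilon\tau(x))<\infty$. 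Consequently $\nu_{\varphi^+_t}(\{\tau=m\})\le C'e^{-\varepsilon m}$ and hence $\nu_{\varphi^+_t}(\{\tau\ge n\})\le C''e^{-\varepsilon n}$, which is the exponential tail. Theorem~\ref{equilibrium1}(2) then delivers exponential decay of correlations and the CLT for $\mu_t$ on a class of observables containing all bounded locally H\"older continuous functions on $Y$; since $M$ is compact, the restriction to $Y$ of any H\"older continuous function on $M$ belongs to this class, giving the stated conclusion.

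The place where care is needed, and where the method genuinely breaks down, is the endpoint $t=1$: there $P_L(1)=0$ and only the bare summability of Condition (Y5) is available, so (P4) fails with a positive $\varepsilon$ and the argument above yields at best a summable, not an exponentially small, tail. This is exactly why Statement~(2) excludes $t=1$, in accordance with the Introduction. The sole bookkeeping needed is tracking the sign of $t$ in the two ranges $0\le t<1$ and $t_0<t<0$ when invoking Lemma~\ref{interval}, but this is already carried out there, so no new estimate is required.
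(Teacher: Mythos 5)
Your proposal is correct, and its overall structure coincides with the paper's: both reduce Statement (2) to verifying that the induced measure $\nu_{\varphi_t^+}=\mathcal{L}^{-1}(\mu_t)$ has exponential tail and then invoke Theorem~\ref{equilibrium1}(2) (equivalently Theorem~\ref{ergodic}), and Statement (1) is in both cases a byproduct of Theorems~\ref{equilibrium1} and \ref{geom_poten} together with the fact that lifts of ergodic induced measures are ergodic. The one genuine difference is how the tail is obtained. The paper repeats an explicit computation: it applies the Gibbs property together with Lemma~\ref{cylindersize} to bound $\mathcal{L}^{-1}(\mu_t)(\Lambda_i^s)$ by $C_0(\J F(x))^{-t}e^{-P_L(t)\tau_i}$, and then reruns the Jensen-plus-\eqref{SN} estimate (for $0\le t<1$) and the $\lambda_1$-estimate (for $t_0<t<0$) from Lemma~\ref{interval} to extract a rate $\theta<1$ expressed in terms of $h$, $\int\varphi_1\,d\mu_1$ and $\lambda_1$. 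You instead isolate the clean general implication that Condition (P4) with its $\varepsilon>0$, fed into the level-one Gibbs inequality with $P_G(\Phi_t^+)=0$, forces $\nu_{\varphi_t^+}(\{\tau\ge n\})\le C''e^{-\varepsilon n}$; since Lemma~\ref{interval} already established (P4) on $(t_0,1)$, no estimate needs to be redone. Your route is shorter and makes explicit a reusable fact ((P4) implies exponential tail of the associated Gibbs measure) that the paper leaves implicit, at the cost of a less explicit decay rate; the mathematical content is ultimately the same since the Jensen/\eqref{SN} work is hidden inside the proof of Lemma~\ref{interval}. Your treatment of the endpoint $t=1$ in Statement (1) via Lemma~\ref{pres-est} is also consistent with the paper, which obtains the ergodicity of $\mu_1$ from the same construction.
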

\begin{proof}
The proof follows from Theorems~\ref{equilibrium1}
and \ref{geom_poten} after checking that the measure $\mathcal{L}^{-1}(\mu_t)\in\mathcal{M}(F,W)$ has exponential tail. First observe that Lemma~\ref{interval} and Theorem~\ref{conditions} (3) yield $0=P_G(\Phi^+)=P_G(\Phi)-P_L(t)$. Since $\mathcal{L}^{-1}(\mu_t)$ is a Gibbs measure, the Gibbs property~\eqref{eq:gibbs2} and Condition (Y4) imply that for any $x\in scl(\Lambda_i^s)$ one has
$$
\mathcal{L}^{-1}(\mu_t)(\Lambda_i^s)\le C_0(\J F(x))^{-t}e^{-P_L(t)\tau_i}.
$$
By Lemma~\ref{cylindersize} there exists some $C>0$ such that
\begin{multline*}
\sum_{\tau_i\ge N}\tau_i\ \mathcal{L}^{-1}(\mu_t)(scl(\Lambda_i^s))\le C_0 \sum_{\tau_i\ge N} \tau_i\sup_{x\in scl(\Lambda_i^s)}(\J F(x))^{-t} e^{-P_L(t)\tau_i}\\  
\le C\sum_{n\ge N} ne^{-P_L(t) n}\sum_{\tau_i=n} \mu_\gamma(\gamma\cap\Lambda_i^s)^t
\end{multline*}
For $0\le t<1$, Jensen's inequality, Condition~\eqref{SN} and Lemma~\ref{pres-est} yield
\begin{multline*}
\sum_{n\ge N} ne^{-P_L(t) n}\sum_{\tau_i=n} \mu_\gamma(\gamma\cap\Lambda_i^s)^t \le \sum_{n\ge N} n e^{-P_L(t)n} S_n^{1-t}\left( \sum_{\tau_i=n} \mu_\gamma(\gamma\cap\Lambda_i^s) \right)^t \\
\le \left(\mu_{\gamma}(\gamma\cap\Lambda)\right)^t\sum_{n\ge N} n e^{n(1-t)(h+\int_x\varphi_1\,d\mu_1)}<K\theta^n
\end{multline*}
for some constants $K>0$ and $e^{(1-t)(h+\int_X\varphi_1\,d\mu_1)}<\theta<1$ which exist since $h<-\int_X\varphi_1\,d\mu_1$. For $t_0<t<0$ with $t_0$ from Lemma~\ref{interval}
\begin{multline*}
\sum_{n\ge N} ne^{-P_L(t) n}\sum_{\tau_i=n} \mu_\gamma(\gamma\cap\Lambda_i^s)^t
\le \sum_{n\ge N} ne^{-P_L(t) n} S_n  \lambda_1^{-nt} \\
\le \sum_{n\ge N} n\lambda_1^{-nt} e^{n(h+(1-t)\int_X\varphi_1\,d\mu_1)} <K\theta^n
\end{multline*}
for some constants $K>0$ and $\lambda_1^{-t} e^{h+(1-t)\int_X\varphi_1\,d\mu_1}<\theta<1$.
\end{proof}

\section{Applications I: Thermodynamics of the H\'enon family at the first birfurcation}\label{Henon}

As an application we outline the result in \cite{SenTak15} where the thermodynamical formalism presented in the previous sections is applied in order to establish the existence and uniqueness of equilibrium measures associated to the geometric potential for H\'enon-like diffeomorphisms of the plane at the first bifurcation parameter. For parameters $a, b\in\R$ and $(x,y)\in\R^2$ the H\'enon map $f_{a,b}$ is defined by 
$$
f_{a,b}(x,y):=(1-ax^2+\sqrt{b}y, \pm\sqrt{b}x).
$$
H\'enon-like maps are $C^2$-small perturbations of  $f_{a,b}$. It is proven in \cite{BedSmi04, BedSmi06, CaoLuzRio08} that for each 
$0<b\ll 1$ there exists a uniquely defined parameter $a^*=a^*(b)$ such that the non-wandering set for $f_{a,b}$ is a uniformly hyperbolic horseshoe for $a > a^*$ and the parameter $a^*$ is the first parameter value for which a homoclinic tangency  between certain stable and unstable manifolds appears. The results presented here apply to fixed parameter values $b$ sufficiently small and $a=a^*(b)$. The dependency on the parameters is thus usually dropped from the notation and $f:=f_{a^*,b}$.
The (one-dimensional) subspace denoted by $E^u(z)$ for which 
$$
\limsup_{n\to\infty}\frac{1}{n}\log\|Df^{-n}|_{E^u(z)}\|<0
$$  
is well-defined on the non-wandering set (see \cite[Proposition 4.1]{SenTak13}). Recall that $J^u(f)$ denotes the absolute value of the Jacobian along the unstable direction. The following result holds:
\begin{thm}\cite[Theorem A]{SenTak15}\label{Henon1}
For any bounded open interval $I\subset(-1,+\infty)$ there exists $0<b_0\ll 1$ such that if $0\le b<b_0$ then there exists a unique equilibrium measure associated to $-t\log J^u(f)$ for all $t\in I$.
\end{thm}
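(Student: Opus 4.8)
The statement is established in full in \cite{SenTak15}; here we outline how it is obtained by feeding the geometric analysis of the H\'enon family at the first bifurcation from \cite{SenTak13} into the general formalism of Sections~\ref{sec:therm-induced}, \ref{sec:liftability} and~\ref{sec:tower}. The plan has three parts: (i) construct, for $b<b_0(I)$ small, an inducing scheme $\{S,\tau\}$ of hyperbolic type for $f=f_{a^{\ast}(b),b}$ whose inducing time is the \emph{first return time} to the base and verify Conditions (Y0)--(Y5), (I3) and (I4); (ii) verify that the geometric potential $\varphi_t=-t\log\J f$ satisfies Conditions (P1)--(P4) for every $t$ in an open neighborhood of $\overline I$, so that Theorems~\ref{equilibrium1} and~\ref{geom_poten} apply; and (iii) upgrade the equilibrium measure obtained in the class of liftable measures to an equilibrium measure in the classical sense.

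For (i) one works away from the unique homoclinic tangency, where the non-wandering set of $f$ carries a dominated splitting, and fixes a set $\Lambda$ with hyperbolic product structure built from local stable and unstable manifolds transverse to this splitting. The first-return branches $\Lambda_i^s\subset\Lambda$ and the return times $\tau_i$ are produced by a Benedicks--Carleson type binding construction along a reference orbit: bounded distortion along binding orbits yields Conditions (Y3)--(Y4), the Markov property of the first-return partition yields (Y1)--(Y2), positivity of leaf volume yields (Y0), and the exponential tail of $\tau$ yields (Y5). Because $\tau$ is a first-return time the images $f^{\tau(J)}(J)$ are pairwise disjoint, hence so are the sets $scl(\Lambda_i^s)$; Proposition~\ref{isyd} then gives (I3) and (I4), and, crucially, every ergodic measure that charges the base of the tower is liftable with respect to $\{S,\tau\}$, so Conditions (L1)--(L2) are not needed.

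For (ii), $\overline\varphi_t=-t\log\J F$ extends continuously to the $s$-closures, so (P1) holds, and (P2) holds for all $t\in\R$ by Lemma~\ref{localholder}. Conditions (P3)--(P4) are handled via Lemmas~\ref{FGP}, \ref{interval} and~\ref{pres-est}: for $t\ge1$, (P3) holds with $c_t=0$ using only (Y5), while for $t<1$ it requires the counting estimate \eqref{SN} with exponent $h<-\int\varphi_1\,d\mu_1$; and (P4) holds for $t_0<t<1$ with $t_0=\bigl(h+\int\varphi_1\,d\mu_1\bigr)/\bigl(\log\lambda_1+\int\varphi_1\,d\mu_1\bigr)$, and for $t\ge1$ it holds as long as $P_L(\varphi_t)$ stays above a negative threshold fixed by the exponential tail rate of $\tau$, which combined with the bound $P_L(\varphi_t)\ge(t-1)\int\varphi_1\,d\mu_1$ of Lemma~\ref{pres-est} yields an admissible upper endpoint $t_1>1$. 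Thus the set of $t$ for which $\varphi_t$ (up to an additive constant $c_t$ as in the proof of Theorem~\ref{geom_poten}) satisfies (P1)--(P4) is an open interval $(t_0,t_1)$ with $t_0<0<1<t_1$, and Theorems~\ref{equilibrium1} and~\ref{geom_poten} then produce, for every such $t$, a unique equilibrium measure $\mu_t$ within the class of liftable measures.

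The hard part is to ensure $t_0<\inf I$ and $t_1>\sup I$ once $b<b_0(I)$, which is exactly where the fine analysis of \cite{SenTak13} at $a=a^{\ast}(b)$ enters. As $b\to0$ the map $f_{a^{\ast}(b),b}$ degenerates to the Chebyshev map $x\mapsto1-2x^2$, which is conjugate to a full tent map, so $-\int\varphi_1\,d\mu_1\to\log2$ and $\log\lambda_1\to\log4$; one shows that the inducing scheme may be chosen so that the exponent $h$ in \eqref{SN} is as small as desired and the exponential tail rate of $\tau$ as large as desired provided $b$ is small, whence $t_0\to-1$ from above and $t_1\to+\infty$. Hence the admissible left endpoint can be pushed arbitrarily close to, but never past, $-1$: at $t=-1$ the homoclinic tangency causes a phase transition of $t\mapsto P_L(\varphi_t)$, which is the reason for the hypothesis $I\Subset(-1,+\infty)$. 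Finally, for (iii): at the first bifurcation every invariant measure of positive entropy charges the base of the tower --- the complement of $\bigcup_{n\ge0}f^{-n}(\Lambda)$ carrying only zero-entropy measures by \cite{BedSmi06,CaoLuzRio08,SenTak13} --- and is therefore liftable, so $\mu_t$ dominates the free energy of every positive-entropy measure; and every zero-entropy invariant measure $\mu$ satisfies $\int\varphi_t\,d\mu<P_L(\varphi_t)$ for $t\in I$, using the uniform lower bound on the Lyapunov exponents of invariant measures at the first bifurcation together with $P_L(\varphi_t)\ge(1-t)\bigl(-\int\varphi_1\,d\mu_1\bigr)$ --- and here again one needs $\inf I>-1$. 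Combining these, $\mu_t$ is the unique equilibrium measure for $-t\log\J f$ among all invariant probability measures for every $t\in I$, which is Theorem~\ref{Henon1}.
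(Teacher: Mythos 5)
Your overall strategy --- build a first-return inducing scheme of hyperbolic type from the geometry at $a=a^*(b)$, feed the geometric potential into the general formalism, and use an entropy threshold to pass from liftable measures to all measures --- matches the paper's. But the middle step is routed differently, and your route has a concrete problem. You verify Conditions (P1)--(P4) and invoke Theorems~\ref{equilibrium1} and~\ref{geom_poten}; however, Lemma~\ref{interval} (the only place (P4) is established in this paper) gives (P4) only for $t_0<t<1$, and Theorem~\ref{geom_poten} is accordingly stated only for $t<1$. Since $I$ is an arbitrary bounded open interval in $(-1,+\infty)$ it may contain values $t\ge 1$, and your patch for that range --- that (P4) ``holds as long as $P_L(\varphi_t)$ stays above a negative threshold fixed by the exponential tail rate of $\tau$'' --- appeals to an exponential tail of the return time with respect to leaf volume, with a rate that improves as $b\to 0$; that input is neither among (Y0)--(Y5) nor \eqref{SN} and is not proved anywhere in this paper. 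The paper sidesteps (P4) entirely: it applies Theorem~\ref{liftgibbs} directly, with strongly summable variations, finiteness of the Gurevich pressure and \emph{positive recurrence} of the induced potential verified in \cite{SenTak15} for all $t\in(t_-,t_+)$, where $t_-\to -1$ and $t_+\to+\infty$ as $b\to 0$; this is what makes the admissible upper endpoint unbounded.

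Two smaller discrepancies. First, the construction: the paper's inducing domain is not obtained by a Benedicks--Carleson binding construction but is the set $\Omega_\infty$ obtained by deleting from the rectangle $R$ the nested neighborhoods $\Theta_n$ of the stable leaf through the tangency $\zeta_0$, with $\tau$ the first return to $\Omega_\infty$; Condition (I4) comes from the fixed point $P$ lying in an element of $S$, and (I3) from the fact that closures of distinct elements of $S$ can only meet inside the stable manifold of $P$. Second, the final uniqueness step: the paper does not claim that every positive-entropy measure charges the base; it uses the quantitative statement of Proposition~\ref{lat} (entropy at least $2\varepsilon$ implies liftable, with $\varlimsup\frac1n\log S_n\le\varepsilon$) together with the fact that any equilibrium measure must have entropy larger than $2\varepsilon$. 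Your version of this step (all positive-entropy measures charge the base; zero-entropy measures are excluded by a strict pressure inequality) is a stronger assertion than what is actually used and would need its own justification from \cite{SenTak15}.
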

The theorem relies on the construction of an inducing scheme obtained through a  first return time function to a given region containing the tangency. This region must, however, be carefully chosen in order to ensure the control of the distortion. This is done as follows.

The map $f=f_{a*(b),b}$ has two hyperbolic fixed points, $Q$ close to $(-1, 0)$ and $P$ close to $(\frac12, 0)$, and the stable and unstable manifolds of $Q$ intersect at the point of homoclinic tangency $\zeta_0$. The largest rectangular region delimited by (two) pieces of stable and (two) pieces of unstable manifolds of $Q$ is denoted by $R$. (see {\sc Figure 1}).

\begin{figure}
\begin{center}
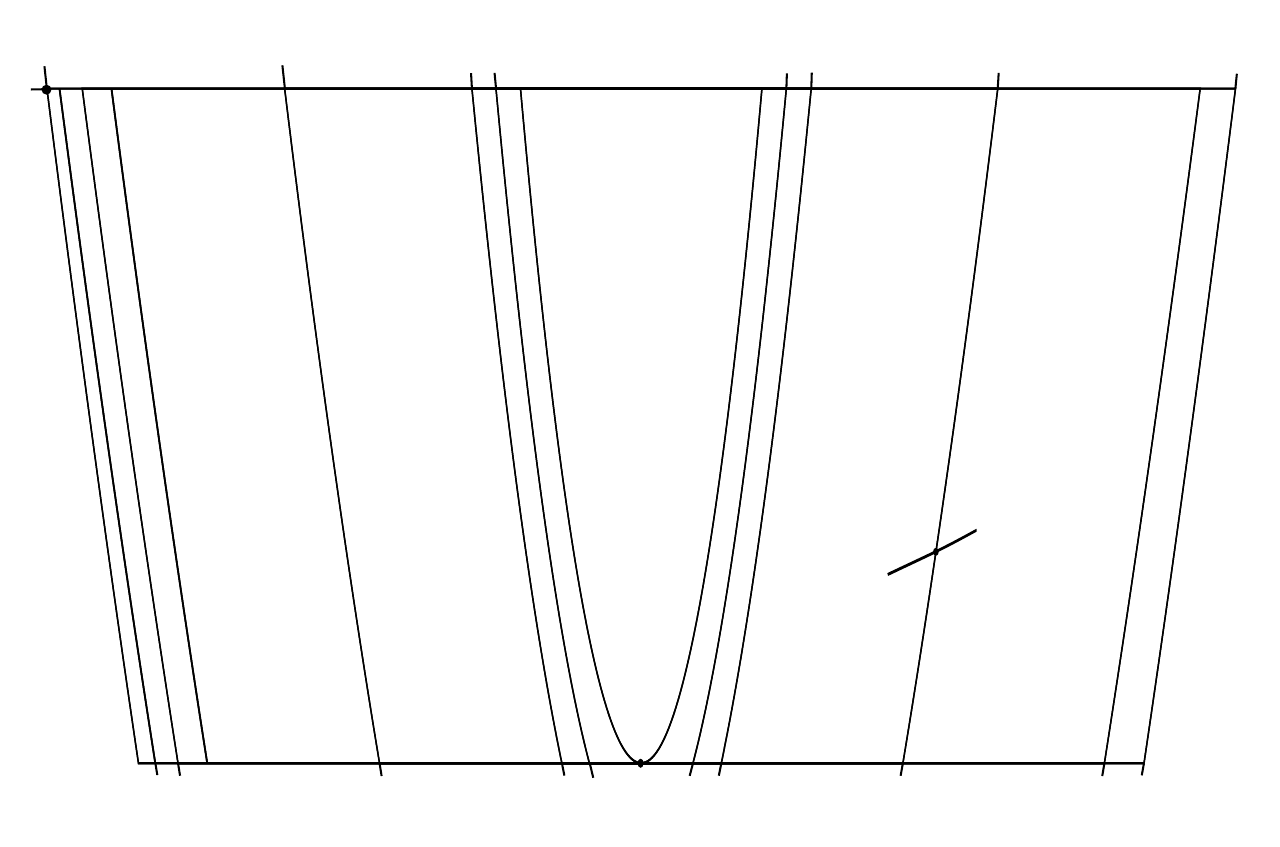
\caption{The rectangle $R$ and the curves $\{\tilde\alpha_n\}$, $\{\alpha_n^{+}\}$, $\{\alpha_n^{-}\}$.
The $\{\tilde\alpha_n\}$ accumulate on the left stable side of $R$. Both $\{\alpha_n^{+}\}$ and $\{\alpha_n^{-}\}$ accumulate on the parabola in $W^s(Q)$
containing the point of tangency $\zeta_0$.
near $(0,0)$.}
\end{center}
\end{figure}

Now define the family $\{\alpha_n^\pm\}_{n\ge0}$ of stable curves. First, let $\tilde\alpha_0$ be the component of $W^s(P)\cap R$ containing $P$ and define the sequence 
$\{\tilde\alpha_n\}_{n\ge0}$ of compact curves in $W^s(P)\cap R$ inductively: given $\tilde\alpha_{n-1}$, define $\tilde\alpha_n$ as the component of $f^{-1}(\tilde{\alpha}_{n-1})\cap R$ which lies to the left of $\zeta_0$. Then denote by $\alpha_{n+1}^-$ (respectively 
$\alpha_{n+1}^+$) the connected component of 
$f^{-2}\tilde\alpha_n\cap R$ which lies closest to $\zeta_0$ on the left (respectively on the right). Denote by $\Theta$ the rectangular region whose boundary consists of the curves $\tilde{\alpha}_0$ and $\tilde{\alpha}_1$ and the unstable boundary of $R$ and by 
$\Theta_n$ the region whose boundary consists of the curves 
$\alpha_{[\frac{10}{\varepsilon}]n+N}^\pm$ and the unstable boundary of $R$ for some given large $N$. Observe that $\Theta_n$ form a nested sequence of rectangles which accumulate to the component of 
$W^s(Q)\cap R$ containing $\zeta_0$. Then define the (closed) set 
$$
\Omega_\infty:=\overline{\{z\in\Theta\colon f^n(z)\in R\setminus\Theta_{n}\text{ for all } n\in\N\}},
$$
where $\overline{J}$ denotes the closure of a set $J$. An inducing scheme $\{S, \tau\}$ is then defined on the subset of those points in $\Omega_\infty$ which return to $\Omega_\infty$ infinitely often (in finite time). The inducing time is the first return time to $\Omega_\infty$ (see \cite[Section 3.5]{SenTak15} for details). One obtains the following: 

\begin{prop}\cite[Proposition 3.2]{SenTak15}\label{lat}
For any small $\varepsilon>0$ there exists $b_0>0$ such that if $0<b<b_0$, there exists a collection $S$ of pairwise disjoint Borel subsets and a function $\tau\colon S\to \mathbb N$ such that $\{S,\tau\}$ is an inducing scheme of hyperbolic type satisfying (I3) and (I4) for $f=f_{a^*(b)}$ and such that
$$
\varlimsup_{n\to\infty}\frac{1}{n}\log S_n\leq\varepsilon,
$$
where $S_n:=\#\{J\in  S\colon \tau(J)=n\}$. In particular, any measure 
$\mu\in\mathcal M(f,\R^2)$ with $h(\mu)\geq 2\varepsilon$ is liftable to the inducing scheme $\{S,\tau\}$.
\end{prop}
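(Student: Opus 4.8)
The plan is to realise the inducing scheme by a first-return construction to the set $\Omega_\infty$ and to read the bound on $S_n$ off the combinatorics of orbit excursions near the homoclinic tangency $\zeta_0$. Following \cite{SenTak15} and the hyperbolicity theory of the H\'enon family at the first bifurcation \cite{BedSmi04,BedSmi06,CaoLuzRio08,SenTak13}, one works inside the rectangle $R$ bounded by arcs of $W^s(Q)$ and $W^u(Q)$, uses the stable curves $\{\tilde\alpha_n\}$ and $\{\alpha_n^\pm\}$ to delimit the nested family $\Theta_n$ shrinking to the component of $W^s(Q)\cap R$ through $\zeta_0$, and sets $\Omega_\infty=\overline{\{z\in\Theta:\ f^n(z)\in R\setminus\Theta_n\ \text{for all }n\}}$. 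Let $W$ be the inducing domain consisting of the points of $\Omega_\infty$ that return to $\Omega_\infty$ infinitely often in finite time, let $\tau$ be the first return time to $\Omega_\infty$, and let $S$ be the partition of $W$ into the maximal sets on which $\tau$ is constant and $f^\tau$ is a homeomorphism onto a $u$-subset. It then remains to verify that $\{S,\tau\}$ is an inducing scheme of hyperbolic type satisfying (I3)--(I4), and that $\varlimsup_n\tfrac1n\log S_n\le\varepsilon$.

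\emph{The axioms.} Since $R$ and $\Omega_\infty$ are cut out by stable and unstable manifolds of $Q$ they carry a hyperbolic product structure, and the first-return images $f^{\tau(J)}(J)$ are $u$-subsets with the Markov property, which gives (I1). For (I2) one shows that the diameters of the nested cylinders $M_n$ (as in the proof of Proposition~\ref{isyd}) tend to $0$, using uniform contraction along $\gamma^s$ and expansion along $\gamma^u$ on $R\setminus\Theta_1$; the defining condition $f^n(z)\in R\setminus\Theta_n$ is precisely what keeps orbits in the uniformly hyperbolic region long enough and controls both the geometry and the unstable-Jacobian distortion of each excursion toward $\zeta_0$, and this bounded-distortion analysis is the technical core of \cite{SenTak13,SenTak15}. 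Condition (I4) holds because $\pi$ carries periodic sequences to periodic points of $F$ in $W=\pi(\check S)$ (Proposition~\ref{conjugacy}), and (I3) follows as in Proposition~\ref{isyd} from the fact that the elements of $S$ are well separated by stable curves, so that $S^\Z\setminus\check S$ cannot carry an invariant measure charging an open set.

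\emph{The counting estimate $\varlimsup_n\tfrac1n\log S_n\le\varepsilon$.} This is the heart of the matter and the step I expect to be the main obstacle. A point $z$ in an element $J$ with $\tau(J)=n$ has orbit $z,f(z),\dots,f^{n-1}(z)$ confined to $R$, avoiding $\Omega_\infty$ before time $n$, and at each step it records which strip of the (essentially Markov) partition of $R\setminus\Theta_1$ it occupies; the only non-finite feature of that partition is that its strips accumulate on $W^s(Q)$ near $\zeta_0$. One bounds $S_n$, up to a subexponential factor furnished by the distortion estimates, by the number of admissible length-$n$ codings, organised according to the excursions the orbit makes toward $\zeta_0$: an excursion penetrating to depth $m$ (reaching near $\Theta_m$) contributes at most a factor $e^{cm}$ to the count, while by the choice of the $\Theta_m$ (delimited by $\alpha^\pm_{[10/\varepsilon]m+N}$) such an excursion occupies at least of order $(10/\varepsilon)m$ consecutive time steps. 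Summing over the ways of distributing $n$ time steps among excursions of various depths --- itself only a subexponential count --- yields $\varlimsup_n\tfrac1n\log S_n\le\tfrac{c}{10}\varepsilon\le\varepsilon$, the constant $10$ being chosen at least as large as $c$. The delicate points are the excursion bookkeeping and the distortion bounds that license replacing $S_n$ by the coding count up to a subexponential error.

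\emph{Liftability.} Since $\varlimsup_n\tfrac1n\log S_n\le\varepsilon<2\varepsilon$, we have $S_n\le e^{2\varepsilon n}$ for all large $n$; and because $\tau$ is a \emph{first} return time, $S^*=\emptyset$, so Condition (L2) holds (trivially, or with $h=2\varepsilon$), while Condition (L1) holds since away from the $O(l)$ initial and terminal steps of each excursion the sets $f^k(J)$ are forced by uniform hyperbolicity on $R\setminus\Theta_1$ together with the $\Theta_m$-control to shrink below any prescribed $\epsilon$. Theorem~\ref{liftability} then shows that every ergodic $\mu$ with $\mu(W)>0$ and $h_\mu(f)>2\varepsilon$ is liftable --- equivalently, Kac's formula gives $\int_W\tau\,d\mu<\infty$ for such $\mu$, so liftability already follows from the remark after \eqref{L-1} and \cite{Zwe05}. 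Finally, to cover every $\mu$ with $h_\mu(f)\ge 2\varepsilon$, note that if $\mu(W)=0$ then $\mu(\bigcup_{k\ge0}f^{-k}(W))=0$, so $\mu$ is carried by the forward-invariant escaping set $E=\{x:\ f^k(x)\notin W\ \text{for all }k\ge0\}$ (which contains the forward orbit of $\zeta_0$); the same excursion count bounds $h_{\text{top}}(f,E)\le\varepsilon$, whence the inverse variational principle (Lemma~\ref{entropybound1}) gives $h_\mu(f)\le\varepsilon<2\varepsilon$. Thus $h_\mu(f)\ge 2\varepsilon$ forces $\mu(W)>0$, and $\mu$ is liftable.
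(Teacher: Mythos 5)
The paper does not actually reprove this statement: it is imported wholesale from \cite[Proposition 3.2]{SenTak15}, and the surrounding text only records why (I3) and (I4) hold and refers to Sections 3.5 and 3.6 of \cite{SenTak15} for the rest. Measured against that, your reconstruction identifies the intended mechanism correctly: the first-return scheme on $\Omega_\infty$, the index $[10/\varepsilon]n+N$ in the curves bounding $\Theta_n$ as the source of the depth-versus-time tradeoff behind $\varlimsup_n\frac1n\log S_n\le\varepsilon$, and the observation that, since $\tau$ is a first return time, $S^*=\emptyset$ and the only genuine content of the liftability claim is that $h(\mu)\ge 2\varepsilon$ forces $\mu(W)>0$ (the paper's own gloss, that ``the inducing time is the first return time and hence every measure that charges the base is liftable,'' silently assumes the base is charged, so your escaping-set argument is addressing the right point). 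Your invocation of (L1) and Theorem~\ref{liftability} is superfluous here: for a first-return scheme Kac's formula already gives $\tau\in L^1(\mu|_W)$, which is the route the paper takes.

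Two concrete points of divergence. For (I4) the paper does not argue through the coding map: it uses the fact that the fixed saddle $P$ lies in one of the elements of the inducing domain, which immediately yields a periodic point of $F$ in $W$. Your route --- ``$\pi$ carries periodic sequences to periodic points of $F$ in $W$'' --- needs some periodic sequence to lie in $\check S$ rather than in $S^\Z\setminus\check S$; a priori the image of a periodic sequence could be a boundary point outside $W$, so as stated this is not a complete verification. For (I3) the paper likewise relies on a structural fact (closures of distinct elements of $S$ meet only inside stable manifolds of $P$, which gives a condition stronger than (I3)) rather than a generic ``well separated by stable curves'' assertion. Finally, the counting estimate $\varlimsup_n\frac1n\log S_n\le\varepsilon$ and the bound $h_{\text{top}}(f,E)\le\varepsilon$ on the escaping set are precisely the parts that this paper does not prove and that your sketch also leaves open: the excursion bookkeeping has the right shape, but certifying that the per-depth entropy constant is dominated by the factor $10/\varepsilon$, and that $S_n$ can be replaced by the coding count up to subexponential error, requires the distortion and transition estimates of \cite{SenTak13, SenTak15}, so as written this step is a plan rather than a proof.
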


The proof that $\{S, \tau\}$ is an inducing scheme of hyperbolic type is carried out in \cite[Section 3.5]{SenTak15}. Note that (I4) follows from the fact that the fixed saddle $P$ belongs to one of the elements of the inducing domain $S$. Also, since the intersection of the closures of elements of $S$ are contained in the stable manifolds of $P$, one obtains a stronger Condition than (I3) (see Condition (A3) in \cite{SenTak15}). The rest of the proof of Proposition~\ref{lat} is carried out in Sections 3.5 and 3.6 of~\cite{SenTak15}.
\begin{prop}\cite[Corolary 4.8]{SenTak15}
For any bounded open interval $I\subset(-1,+\infty)$ there exists $0<b_1\ll 1$ such that if $0\le b<b_1$ then for all $t\in I$ the potential function $-t\log J^u(f)$ has strongly summable variations, finite Gurevic pressure and is positive recurrent. 
\end{prop}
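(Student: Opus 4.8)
Write $\overline\varphi_t=-t\log\J F$ for the potential induced on the base $W$ of the tower of Proposition~\ref{lat} by $\varphi_t=-t\log\J f$, let $\Phi:=\overline\varphi_t\circ\pi$ be the corresponding potential on the full two-sided countable shift $S^{\Z}$, and let $\Phi^+:=\overline{\varphi_t-P_L(\varphi_t)}\circ\pi$ be its normalization. The plan is to verify, for $b$ (hence $\varepsilon$) small depending on the bounded interval $I$, that this potential meets the hypotheses of the abstract results of Sections~\ref{sec:therm-2-sided}--\ref{sec:therm-induced}, proceeding exactly as in Section~\ref{sec:tower}ff.\ for Young's diffeomorphisms but substituting the H\'enon-specific hyperbolicity and distortion estimates of \cite{SenTak13} for Conditions (Y3)--(Y4) and the element count $S_n\le e^{\varepsilon n}$ of Proposition~\ref{lat} for \eqref{SN}.

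\emph{Strongly summable variations.} The first-return scheme of \cite{SenTak15} is built over the carefully chosen region $\Omega_\infty$ precisely so that $F$ contracts stable leaves and expands unstable leaves at uniform rates and $\J F$ has exponentially bounded distortion along leaves, which are the analogues of Conditions (Y3)--(Y4) established in \cite{SenTak13}. Running the argument of Lemma~\ref{localholder} verbatim gives $V_n(\Phi)\le C|t|\beta^n$ for all $n\ge 1$ and some $C>0$, $0<\beta<1$, so $\Phi$ is locally H\"older continuous --- a fortiori of strongly summable variations --- for every $t\in I$, and $V_n(\Phi^+)=V_n(\Phi)$.

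\emph{Finite Gurevich pressure and positive recurrence.} By Lemma~\ref{cylindersize} (whose proof uses only ingredients available here: (Y0), (Y2), (Y4)(b) and Proposition~\ref{Jacobian}) one has, uniformly in the cylinder, $\mu_\gamma(\gamma\cap\Lambda_{a_0,\dots,a_{n-1}})\asymp(\J(F^n)(x))^{-1}$ with $\J(F^n)(x)\le\lambda_1^{\tau_{a_0}+\cdots+\tau_{a_{n-1}}}$. Feeding $S_n\le e^{\varepsilon n}$ into the estimates of Lemmas~\ref{FGP} and \ref{interval} in place of \eqref{SN} shows that there is $b_1=b_1(I)>0$ such that for $0\le b<b_1$ and every $t\in I$ the geometric potential --- after replacing $\varphi_t$ by the cohomologous $\varphi_t+c_t$ when $t<1$, exactly as in the proof of Theorem~\ref{geom_poten} --- satisfies Conditions (P2)--(P4); by Theorem~\ref{conditions} this yields that $\Phi$ has strongly summable variations, $P_G(\Phi)<\infty$, $\sup_{\underline a\in S^\Z}\Phi^+(\underline a)<\infty$ and $P_G(\Phi^+)=0$. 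Since the inducing scheme is conjugate to the \emph{full} shift over $S$, that shift is topologically mixing with the big-images-and-preimages property, so the theory of Sarig (recorded here as Lemma~\ref{sarig} and Theorem~\ref{gibbs2}) produces a unique $\sigma$-invariant ergodic Gibbs probability measure for the locally H\"older potential $\Phi^+$ with $P_G(\Phi^+)<\infty$; this is precisely the assertion that the potential is positive recurrent. Thus all three properties hold for every $t\in I$.

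The hard part is the quantitative matching behind the choice of $b_1$, specifically for $t$ near the left endpoint $-1$ of $(-1,+\infty)$: there the partition sums carry the growing factors $\lambda_1^{|t|(\tau_{a_0}+\cdots+\tau_{a_{n-1}})}$, and their summability --- equivalently the validity of Condition (P4) --- requires $\varepsilon$ small relative to $\log\lambda_1$ and to $-\int\varphi_1\,d\mu_1$, which is exactly the content of the threshold $t_0=\frac{h+\int\varphi_1\,d\mu_1}{\log\lambda_1+\int\varphi_1\,d\mu_1}$ of Lemma~\ref{interval} and of the hypothesis $h<-\int\varphi_1\,d\mu_1$ of Theorem~\ref{geom_poten}. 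As $b\to 0$ the H\'enon map $f_{a^*(b),b}$ degenerates to the one-dimensional quadratic map at its first bifurcation (the Chebyshev map), along which $\varepsilon\to 0$ while $\log\lambda_1$ and $\int\varphi_1\,d\mu_1$ tend to their one-dimensional values, forcing $t_0\to-1$. Hence for any fixed bounded $I\subset(-1,+\infty)$ one can choose $b_1\ll 1$ so that $I$ lies strictly inside the admissible window $(t_0,+\infty)$ for all $0\le b<b_1$, which is the asserted conclusion.
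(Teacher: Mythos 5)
The paper does not actually prove this proposition: it is quoted verbatim from \cite[Corollary 4.8]{SenTak15}, and the only ``proof'' offered is the surrounding paragraph pointing to Lemmas 4.1, 4.2 and 4.7 of that reference together with the explicit thresholds $t_-<0<t_+$ satisfying $t_-\to-1$ and $t_+\to+\infty$ as $b\to0$. Your plan reconstructs essentially that same argument by transplanting the machinery of Sections 6--7 (distortion $\Rightarrow$ local H\"older continuity of $\Phi$, cylinder-measure estimates plus $S_n\le e^{\varepsilon n}$ $\Rightarrow$ finite pressure, BIP for the full shift $\Rightarrow$ positive recurrence once the pressure is finite), so in spirit you are on the paper's route. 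Your observation that positive recurrence is automatic for a locally H\"older potential of finite Gurevich pressure on the full (hence BIP) shift is correct and is arguably cleaner than invoking a separate recurrence lemma.

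Two points deserve flagging. First, your admissible window is $(t_0,+\infty)$, whereas \cite{SenTak15} establishes a window $(t_-,t_+)$ with $t_+<\infty$ for each fixed $b>0$: for the H\'enon scheme the finiteness of $P_G(\Phi_t)$ (equivalently the summability in Condition (P3)) is \emph{not} free for large $t$, because the lower bounds on $\J F$ coming from the hyperbolicity estimates degenerate as $b$ is fixed and the tangency is approached; one needs $t_+\to\infty$ as $b\to0$ to swallow the right end of a bounded $I$, and your plan does not address why an upper restriction arises or disappears. Second, your claim that $t_0\to-1$ rests implicitly on the Chebyshev identity $\log\lambda_1\to-2\int\varphi_1\,d\mu_1$ (i.e.\ $\log4=2\log2$ for the limiting quadratic map); this does work out, but it should be said explicitly since the formula for $t_0$ in Lemma~\ref{interval} does not tend to $-1$ merely because $h=\varepsilon\to0$. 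With those two items supplied (both are quantitative estimates carried out in \cite[Section 4]{SenTak15}), your outline matches the intended proof.
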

There exist $-1<t_-<0<t_+$ (defined explicitely in \cite[Section 4.3]{SenTak15}) with $\lim_{b\to0}t_-=-1$ and $\lim_{b\to0}t_+=\infty$ and such that for any $t_-<t<t_+$ the induced potential $-t\log J^u(f)$ associated to the inducing scheme of Proposition~\ref{lat} has strongly summable variations and finite Gurevic pressure (see Lemmas 4.1 and 4.2 in \cite[Section 4.1]{SenTak15}) and is positive recurrent (see \cite[Lemm 4.7]{SenTak15}). 

Theorem~\ref{Henon1} now follows from Theorem~\ref{liftgibbs} after observing that for all bounded open $I\subset (-1,\infty)$ and 
$\varepsilon>0$ sufficiently small there exists  $b_1$ such that for all 
$0\le b<b_1$ one must have $I\subset(t_-,t_+)$ and the entropy of any equilibrium measure must be larger than $2\varepsilon$. By Proposition~\ref{lat}, such measures are liftable to the inducing scheme $\{S, \tau\}$  and Theorem~\ref{Henon1} follows.

\section{Applications II: Thermodynamics of the Katok map}\label{katok-map}

We will outline another application of our results to effect the thermodynamics of the Katok map $f$ (see \cite{PesSenZha14} for a detailed presentation). This map was introduced in \cite{Kat79} and is an area preserving non-uniformly hyperbolic 
$C^\infty$ diffeomorphism of the $2$-torus. We shall show existence and uniqueness of equilibrium measures for the geometric potential $\varphi_t=-t\log \J f(x)$ for $0\le t\le 1$ and describe some of their ergodic properties.

\subsection{Definition of the Katok map.} Consider the automorphism of the two-dimensional torus $\T^2=\R^2/\Z^2$ given by the matrix
$T:=\left(\begin{smallmatrix} 2 & 1\\1 & 1\end{smallmatrix}\right)$ and then choose a function $\psi:[0,1]\mapsto[0,1]$ satisfying:
\begin{enumerate}
\item[(K1)] $\psi$ is of class $C^\infty$ except at zero;
\item[(K2)] $\psi(u)=1$ for $u\ge r_0$ and some $0<r_0<1$;
\item[(K3)] $\psi'(u)> 0$ for every $0<u<r_0$;
\item[(K4)] $\psi(u)=(ur_0)^\alpha$ for $0\le u\le\frac{r_0}{2}$ where $0<\alpha<\frac12$. 
\end{enumerate}
Let $D_r=\{(s_1,s_2): {s_1}^2+{s_2}^2\le r^2\}$
where $(s_1,s_2)$ is the coordinate system obtained from the eigendirections of $T$. Choose $r_1>r_0$ such that
\begin{equation}\label{numbers}
D_{r_0}\subset\Int T(D_{r_1})\cap \Int T^{-1}(D_{r_1})
\end{equation}
and consider the system of differential equations in $D_{r_1}$
\begin{equation}\label{batata10}
\dot{s}_1= s_1\log\lambda,\quad \dot{s}_2=-s_2\log\lambda,
\end{equation}
where $\lambda>1$ is the eigenvalue of $T$. Observe that $T$ is the time-one map of the flow generated by the system of equations \eqref{batata10}.

We slow down trajectories of the the system  \eqref{batata10} by perturbing it in $D_{r_1}$ as follows
\begin{equation}\label{batata2}
\begin{aligned}
\dot{s}_1=&\quad s_1\psi({s_1}^2+{s_2}^2)\log\lambda\\
\dot{s}_2=&- s_2\psi({s_1}^2+{s_2}^2)\log\lambda.
\end{aligned}
\end{equation}
This system of equations generates a local flow. Denote by $g$ the time-one map of this flow. The choices of $\psi$ and $r_0$ and $r_1$ guarantee that the domain of $g$ contains $D_{r_1}$.
Furthermore, $g$ is of class $C^\infty$ in $D_{r_1}$ except at the origin and it coincides with $T$ in some neighborhood of the boundary $\partial D_{r_1}$. Therefore, the map
\[
G(x)=\begin{cases} T(x) & \text{if $x\in\T^2\setminus D_{r_1}$,}\\
g(x) & \text{if $x\in D_{r_1}$}
\end{cases}
\]
defines a homeomorphism of the torus $\T^2$, which is a $C^\infty$
diffeomorphism everywhere except at the origin. 

The map $G$ preserves a probability measure $\nu$, which is absolutely continuous with respect to the area. The density of $\nu$ is a $C^\infty$ function 
that is infinite at~$0$. One can further perturb the map $G$ to obtain an area-preserving $C^\infty$ diffeomorphism $f$. 
This is the Katok map (see \cite{Kat79} and also \cite{BarPes13})
\footnote{Our construction of the Katok map differs from Katok's original construction in that the latter allows an arbitrary function $\psi$ which is $C^\infty$ everywhere but at the origin, is zero at the origin and satisfies 
$\int_0^1\frac{du}{\psi(u)}<\infty$.}.

We mention the following important properties of the map $f$.

\begin{prop}\label{kat1}
The map $f$ has nonzero Lyapunov exponents but at the origin and is topologically conjugate to the hyperbolic automorphism $T$ via a conjugacy homeomorphism $H$.
\end{prop}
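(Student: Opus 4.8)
The plan is to prove the two assertions by first treating the auxiliary homeomorphism $G$ and then transferring everything to the smooth area‑preserving perturbation $f$. The organizing observation is that the slowed‑down system \eqref{batata2} differs from the linear system \eqref{batata10} only by the time reparametrization $dt\mapsto\psi(s_1^2+s_2^2)\,dt$, whose multiplier is strictly positive off the origin (by (K3)) and identically $1$ on $\{s_1^2+s_2^2\ge r_0\}$ (by (K2)). Hence the flow of \eqref{batata2} has in $D_{r_1}$ exactly the same oriented orbit foliation as the linear flow — the hyperbolae $s_1s_2=\mathrm{const}$ together with the two coordinate axes — and coincides with it near $\partial D_{r_1}$.

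For the statement about Lyapunov exponents I would run the standard invariant‑cone argument. On $\T^2\setminus D_{r_0}$ the map $G$ equals the Anosov automorphism $T$, while on $D_{r_1}\setminus\{0\}$ the time‑one map $g$, sharing the orbit foliation of $T$, maps the coordinate cone field strictly inside itself with transversality bounded below on each annulus $\{s_1^2+s_2^2\ge\varepsilon\}$. Together these give $Df$‑invariant stable and unstable cone families on $\T^2\setminus\{0\}$, and since $Dg|_0=\mathrm{Id}$ the origin is the only point where the splitting degenerates. The cone criterion, combined with the fact that $\nu$‑almost every orbit spends a positive frequency of time in $\{s_1^2+s_2^2\ge r_0\}$, then yields nonzero Lyapunov exponents $\nu$‑almost everywhere, with the origin as the exceptional fixed point. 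This part is contained in \cite{Kat79,BarPes13}, so I would merely recall the cone estimate.

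For the conjugacy I would build $H$ orbit by orbit from the flow picture. On a non‑axis orbit $\mathcal{O}\cong\R$ both time‑one maps, of the linear flow and of \eqref{batata2}, are fixed‑point‑free, hence conjugate to the unit translation; one selects the conjugacy on $\mathcal{O}$ so that it is the identity on the two ends of $\mathcal{O}$, which lie in $\{s_1^2+s_2^2\ge r_0\}$ where the two flows agree. On each half‑axis the two flows are conjugated by their natural flow‑box coordinates (for the linear flow, essentially $u\mapsto\pm\log_\lambda u$; for \eqref{batata2}, $u\mapsto\int_u^{r_1}\frac{dw}{w\psi(w^2)\log\lambda}$), each identifying the half‑axis with a half‑line. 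Gluing these per‑orbit maps — and arranging the identity region to be a $T$‑deep annular neighborhood of $\partial D_{r_1}$ so that the interface is consistent — yields a homeomorphism $H_0$ of $D_{r_1}$ with $H_0\circ T=g\circ H_0$, which extends by the identity on $\T^2\setminus D_{r_1}$ to a homeomorphism $H$ of $\T^2$ with $H\circ T=G\circ H$. Finally, since the perturbation turning $G$ into $f$ is $C^0$‑small and, near the origin, is again of time‑reparametrization type, the same surgery applies to $f$ (equivalently, $f$ is topologically conjugate to $G$), so $f$ is topologically conjugate to $T$; the details of this last step are as in \cite{Kat79,BarPes13}.

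I expect the main obstacle to be the continuity of $H$ (and of $H^{-1}$) at the origin: as $c\to0$ the hyperbolae $s_1s_2=c$ collapse onto the union of the axes, and both the linear‑flow time and the \eqref{batata2}‑time spent near $0$ along $\mathcal{O}_c$ grow without bound, so one must check that the glued per‑orbit conjugacies nevertheless extend continuously across $0$. This is precisely where the explicit profile (K4), $\psi(u)=(ur_0)^\alpha$ with $0<\alpha<\tfrac12$ — in particular the integrability $\int_0^1\frac{du}{\psi(u)}<\infty$ — is used, to bound the accumulated time change near the origin; it is also the reason $H$ is merely a homeomorphism and cannot be taken $C^1$. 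The remaining verifications (continuous dependence of the per‑orbit conjugacy on the orbit away from $0$, and consistency along $\partial D_{r_1}$) are routine.
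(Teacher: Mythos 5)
The paper never proves Proposition \ref{kat1}: it is quoted from \cite{Kat79} (see also \cite{BarPes13}), so your argument has to stand on its own. The Lyapunov-exponent half is an acceptable sketch of the standard cone argument; the only thing to make non-circular is the claim that $\nu$-a.e.\ orbit spends a positive frequency of time outside $D_{r_0}$ (one must rule out ergodic components other than $\delta_0$ concentrated in $D_{r_0}$, which is easy since the only compact invariant subset of $D_{r_0}$ is the origin, but it should be said rather than assumed).

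The conjugacy half has a genuine gap. You propose a conjugacy $H_0$ with $H_0\circ T=g\circ H_0$ that is the identity near $\partial D_{r_1}$ (indeed at ``both ends'' of each orbit segment) and then extend it by the identity to $\T^2$. No such $H_0$ exists. A conjugacy sends the $k$-th iterate of a point to the $k$-th iterate of its image, hence preserves the number of iterates an orbit segment needs to cross $D_{r_1}$ from entry to exit; but the entire point of the slow-down is that $g$-orbits passing near the origin take strictly (and unboundedly) more iterates to traverse $D_{r_1}$ than the corresponding $T$-orbit segments. Concretely, if $H=\mathrm{id}$ off $D_{r_1}$, iterating $H\circ T=G\circ H$ from an entry point $x$ gives $H(T^{N}x)=G^{N-1}(Tx)$, which still lies deep inside $D_{r_1}$ at the step $N$ where $T^{N}x$ has already exited --- contradicting $H=\mathrm{id}$ outside $D_{r_1}$. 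The actual conjugacy is a global homeomorphism that displaces points along topological stable/unstable leaves everywhere on $\T^2$; a correct route is to check that $G$ is expansive with local product structure (equivalently, shadowing) and isotopic to $T$, and invoke the topological rigidity of hyperbolic toral automorphisms, rather than an orbit-by-orbit surgery supported in $D_{r_1}$. Finally, the passage from $G$ to $f$ is not a ``$C^0$-small perturbation'' argument ($C^0$-small perturbations do not preserve conjugacy class); $f$ is conjugate to $G$ because it is \emph{defined} as $\varphi\circ G\circ\varphi^{-1}$ for the coordinate change $\varphi$ that straightens the invariant density into area.
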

\subsection{An inducing scheme for the Katok map.} We begin with an inducing scheme for the hyperbolic automorphism $T$. Consider a finite Markov partition $\Px$ for $T$ and let $P\in\Px$ be a partition element which does not contain the origin. Note that $P$ is a rectangle with hyperbolic product structure.
Moreover, given $\varepsilon>0$ we can always choose the Markov partition 
$\Px$ in such a way that $\text{diam }(P)<\varepsilon$ and 
$P=\overline{\Int P}$ for any $P\in\Px$. For a point $x\in P$ denote by $\gamma^s(x)$ (respectively, $\gamma^u(x)$) the connected component of the intersection of $P$ with the stable (respectively, unstable) leaf of $x$, which contains $x$. 

We construct an inducing scheme for $T$ as follows. Given 
$x\in P$, let $\tau(x)$ be the first return time of $x$ to $\Int P$.
For all $x$ with $\tau(x)<\infty$, the connected component of the level set $\tau=\tau(x)$ containing $x$ is 
$$
\Lambda^s(x)=\bigcup_{y\in U^u(x)\setminus A^u(x)}\,\gamma^s(y)
$$ 
where $U^u(x)\subseteq\gamma^u(x)$ is an interval containing $x$ and open in the induced topology of $\gamma^u(x)$, and 
$A^u(x)\subset U^u(x)$ is the set of points which either lie on the boundary of the Markov partition or  never return to the set $P$. Note that $A^u(x)$ has zero Lebesgue measure. By construction, 
$\Lambda^s(x)$ is an $s$-subset whose image under $T^{\tau(x)}$ is a $u$-subset containing $T^{\tau(x)}(x)$. Furthermore, for any 
$x, y\in P$ with finite first return time the sets $\Lambda^s(x)$ and 
$\Lambda^s(y)$ are either coincide or disjoint. 
\begin{prop}\label{kat2} The following statements hold:
\begin{enumerate}
\item The collection of sets $\{\Lambda_i^s\}$ and numbers 
$\{\tau_i\}$ generate an inducing scheme $\{S,\tau\}$ for $T$ which satisfies Conditions (I3) and (I4);
\item There exists $h_1<h_{top}(T)$ such that 
$$
S_n\le e^{h_1n};
$$
\end{enumerate}
\end{prop}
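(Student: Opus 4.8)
The plan is to establish the two assertions separately, with the substance lying in the entropy bound \emph{(2)}. For \emph{(1)}, the sets $\Lambda_i^s$ are by construction the connected components of the level sets of the first return time $\tau$ of $P$ to $\Int P$; each is a full-height $s$-subset of $P$, and by the Markov property of $\mathcal P$ its image $T^{\tau_i}(\Lambda_i^s)$ is a full-width $u$-subset of $P$. Since $\tau$ is a \emph{first} return time and $T$ is a diffeomorphism, the images $T^{\tau_i}(\Lambda_i^s)$ are pairwise disjoint with union $\Int P$ up to a set of zero leaf volume, so Condition (I1) holds (the extension to a diffeomorphism of a neighborhood being inherited from $T^{\tau_i}$). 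Condition (I2) follows from uniform hyperbolicity of $T$ exactly as in the proof of Proposition~\ref{isyd}: the nested intersections of $u$-closures of $u$-cylinders and $s$-closures of $s$-cylinders are non-empty by the product structure of $P$ and have diameters decaying geometrically, because $F$ contracts stable fibres and $F^{-1}$ contracts unstable fibres uniformly. Condition (I4) is immediate: $T$ has dense periodic points, hence $\Int P$ contains a periodic point of $T$, and such a point returns to $P$ periodically and is thus a periodic point of $F$.

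Condition (I3) is the one point in \emph{(1)} requiring care. By Proposition~\ref{conjugacy}, if $\underline a\in S^\Z\setminus\check S$ then $\pi\sigma^n(\underline a)\in\overline{\Lambda_{a_n}^s}\setminus\Lambda_{a_n}^s$ for some $n$, so $\pi(S^\Z\setminus\check S)$ is contained in the grand $T$-orbit of $\bigcup_i\big(\overline{\Lambda_i^s}\setminus\Lambda_i^s\big)$. Writing $\Lambda_i^s=\bigcup_{y\in U^u(x)\setminus A^u(x)}\gamma^s(y)$, this difference lies in the stable saturation inside $P$ of $\partial U^u(x)\cup A^u(x)$, i.e. in a countable union of stable and unstable leaf segments lying in the grand orbit of $\partial\mathcal P$ together with the zero-leaf-volume non-returning set. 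Using $P=\overline{\Int P}$ and the fact that $\mathcal P$ is a Markov partition, one checks --- as in \cite{PesSenZha14}, and parallel to the verification of the stronger analogue of (I3) in the H\'enon case of Section~\ref{Henon} --- that no ergodic $\sigma$-invariant measure giving positive weight to an open subset can be carried by $S^\Z\setminus\check S$. This proves that $\{S,\tau\}$ is an inducing scheme of hyperbolic type satisfying (I3) and (I4).

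For \emph{(2)} I pass to the symbolic model of $T$. Since $T$ is topologically mixing the Markov partition $\mathcal P$ may be chosen so that the associated subshift of finite type $(\Sigma_A,\sigma)$ has an irreducible transition matrix $A$; the natural coding map $\Sigma_A\to\T^2$ is finite-to-one, so $h_{top}(T)=h_{top}(\Sigma_A)=\log\rho(A)$, where $\rho(\cdot)$ denotes the Perron eigenvalue. Let $b\in\mathcal P$ be the symbol of $P$. A strip $\Lambda_i^s$ with $\tau_i=n$ is the sub-rectangle of $P$ cut out by an admissible word $(w_0,\dots,w_n)$ with $w_0=w_n=b$ and, by the first-return property, $w_j\neq b$ for $1\le j\le n-1$; distinct strips of inducing time $n$ give distinct words. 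Hence $S_n$ is at most the number of admissible words of length $n-1$ over the alphabet $\mathcal P\setminus\{b\}$ with the induced transitions, which is bounded by $C\rho(A')^{\,n}$, where $A'$ is the principal submatrix of $A$ obtained by deleting the row and column of $b$. Because $A$ is irreducible and $b$ is a genuine symbol, $\rho(A')<\rho(A)$ by Perron--Frobenius. Choosing $h_1$ with $\log\rho(A')<h_1<\log\rho(A)=h_{top}(T)$ gives $S_n\le e^{h_1 n}$ for all large $n$, and, after a small further increase of $h_1$ (still below $h_{top}(T)$) to absorb the finitely many remaining values, for all $n$.

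The routine ingredients are Conditions (I1), (I2), (I4); the two places where something must actually be proved are the verification of (I3), which rests on the fine structure of the Markov boundary and the non-returning set and is carried out in \cite{PesSenZha14}, and, above all, the strict Perron--Frobenius inequality $\rho(A')<\rho(A)$ in \emph{(2)}. The latter is what yields $h_1<h_{top}(T)$, and since $h_{top}(f)=h_{top}(T)$ by the topological conjugacy of Proposition~\ref{kat1}, it is exactly the estimate \eqref{SN} with $h_1<h_{top}(f)$ needed to apply Condition (L2) and the results of Sections~\ref{sec:therm-induced}--\ref{sec:liftability} to the Katok map.
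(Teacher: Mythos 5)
Your proposal is correct and follows essentially the same route as the paper: the paper declares statement (1) straightforward and proves (2) by exactly the same count of admissible words of length $n$ in the Markov coding in which the symbol of $P$ occurs only as the first and last letter. The only difference is one of detail --- you supply the verification of (I1)--(I4) that the paper omits, and the Perron--Frobenius justification ($\rho(A')<\rho(A)$ for the principal submatrix omitting the symbol of $P$) that the paper leaves implicit when asserting that this word count grows with exponent $h_1<h_{top}(T)$.
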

The first statement is straightforward.
For the second statement, it suffices to estimate the number of sets $\Lambda^s_i$ with a given $i$. This number is less than the number of periodic orbits of $T$ of minimal period $\tau_i$ that originate in $P$. Using the symbolic representation of $T$ as a subshift of finite type induced by the Markov partition $\mathcal{P}$, one can see that the latter equals the number of symbolic words of length $\tau_i$ for which the symbol $P$ occurs only as the first and last symbol (but nowhere in between). The number of such words grows exponentially with exponent $h_1 < h_{top}(T)$.

Using Proposition~\ref{kat2} and applying the conjugacy map $H$ from Proposition \ref{kat1}, one obtains an inducing scheme $\{H(S), \tau\}$ for the Katok map $f$, where $H(S)=\{\tilde J=H(J)\}_{J\in S}$, which satisfies Conditions (I3) and (I4). Since $H$ preserves topological and combinatorial information about $T$ (in particular, its topological entropy), the inducing schemes for $f$ and $T$ have the same number $S_n$ of basic elements with inducing time $\tau_i=n$.

On the other hand, one can obtain the same inducing scheme by representing the Katok map $f$ as a \emph{Young diffeomorphism} associated with the collection of $s$-subsets $H(\Lambda_i^s)$. 

We further restrict the choice of the partition element $P$. Given $Q>0$, we can take the number $r_0$ in the construction of the Katok map so small and, by refining the Markov partition if necessary, we can choose a partition element $P$ such that 
\begin{equation}\label{partition}
f^n(x)\notin D_{r_0} \text{ for any } 0\le n\le Q 
\end{equation}
and any point $x$ for which either $x\in P$ or $x\notin f(D_{r_0})$ while  $f^{-1}(x)\in D_{r_0}$.

\begin{prop}
There exists $Q>0$ such that for the chosen inducing scheme, the collection of $s$-subsets $H(\Lambda_i^s)$ satisfies Conditions (Y0)--(Y5). 
\end{prop}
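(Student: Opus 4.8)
The plan is to transport to $f$ the first-return inducing scheme constructed for the linear automorphism $T$ in Proposition~\ref{kat2}, via the conjugacy $H$ of Proposition~\ref{kat1}, and to verify Conditions (Y0)--(Y5) directly for the continuous families $H(\Gamma^u), H(\Gamma^s)$, the $s$-subsets $H(\Lambda_i^s)$ and the return times $\tau_i$. Two features of the construction of $f$ drive the argument. First, $f$ coincides with the linear automorphism $T$ outside $D_{r_1}$, is a $C^\infty$ diffeomorphism off the origin, is uniformly hyperbolic with uniformly bounded distortion outside $D_{r_0}$, and the only place where its rates degenerate toward $1$ is near the origin, where the slowdown \eqref{batata2} is governed by the \emph{polynomial} profile (K4); moreover, in the adapted coordinates $(s_1,s_2)$ the system \eqref{batata2} keeps the coordinate directions essentially invariant, so near $D_{r_1}$ the stable and unstable foliations of $f$ are close to the straight-line foliations of $T$, hence $C^{1+\epsilon}$, uniformly transverse, and genuine continuous families of $C^1$ disks. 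Second, the number $Q$ in \eqref{partition} is chosen so large that every orbit segment realizing a first return to $H(P)$ contains at least $Q$ consecutive iterates on which the point stays outside $D_{r_0}$, both before and after any visit to $D_{r_0}$; this buffer of uniformly hyperbolic iterates is what upgrades the merely non-uniform hyperbolicity of $f$ to the \emph{uniform} Conditions (Y3) and (Y4) for the induced map, and what makes the distortion created by near-origin excursions summable.

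Conditions (Y0)--(Y2) are then formal. Take $\Lambda:=H(P)$ with the hyperbolic product structure inherited from the Markov rectangle $P$: since $H$ conjugates $T$ to $f$ it carries $T$-(un)stable leaves onto $f$-(un)stable leaves, so the straight-line families of $P$ become continuous families $\Gamma^u,\Gamma^s$ of $f$-disks; transversality and $C^{1+\epsilon}$-regularity of the leaves are intrinsic to $f$ and hold because $f$ is $C^{1+\epsilon}$ on a neighborhood of the compact set $\Lambda$, which does not contain the origin. Since $P=\overline{\Int P}$, each $\gamma^u$ meets $\Lambda$ in a nonempty relatively open subinterval, so $\mu_{\gamma^u}(\gamma^u\cap\Lambda)>0$ and (Y0) holds. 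For (Y1): the $H(\Lambda_i^s)$ are pairwise disjoint $s$-subsets (images of disjoint $s$-subsets under the injective $H$), and because $H$ conjugates $T^{\tau_i}$ to $f^{\tau_i}$ and $\Int P$ to $\Int H(P)$, the return times, the invariance relations and the Markov property transfer verbatim. For (Y2): $(\Lambda\setminus\bigcup_i\Lambda_i^s)\cap\gamma^u$ is the union of the countably many intersections with partition-boundary leaves and the set of points of $\gamma^u$ whose $f$-orbit never re-enters $\Int H(P)$; the latter is area-null by Poincar\'e recurrence (recall $f$ preserves area), hence $\mu_{\gamma^u}$-null on every leaf of $\Gamma^u$ by the absolute continuity of the unstable foliation of $f$, and the $s$-closure statement is identical.

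The analytic content is (Y3)--(Y5). A first-return segment $x,f(x),\dots,f^{\tau_i-1}(x)$ decomposes into maximal blocks outside $D_{r_0}$, where $f$ contracts $\gamma^s$ and expands $\gamma^u$ by a factor uniformly bounded away from $1$ (equal to $\lambda^{\pm1}$ outside $D_{r_1}$), and blocks inside $D_{r_0}$, where the time-one map of \eqref{batata2} (and, after the area-preserving smoothing, $f$ itself) still contracts the $s_2$-direction and expands the $s_1$-direction, by a factor that tends to $1$ only as one approaches the origin. By \eqref{partition} each return contains at least $Q$ uniformly hyperbolic iterates, so the net contraction along $\gamma^s$ and the net expansion along $\gamma^u$ are controlled by a definite power of the uniform rate; choosing $Q$ so large that this power beats the constant required in (Y3) gives (Y3). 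For (Y4) one estimates the telescoping sums $\sum_k\big|\log \J f(f^kx)-\log \J f(f^ky)\big|$ along a return segment: iterates outside $D_{r_0}$ contribute a summable amount since $f$ has uniformly bounded distortion there, while for iterates near the origin one uses that, by (K4), $\log \J f$ is comparable to $\psi(s_1^2+s_2^2)\log\lambda$, with gradient bounded near the origin by a fixed negative power of $s_1^2+s_2^2$, together with the exponential decay of $d(f^kx,f^ky)$ along the leaf supplied by the uniformly hyperbolic iterates of \eqref{partition}; this yields the bound $c\beta^n$ of (Y4). For (Y5) one estimates, again from (K4), the $\mu_{\gamma^u}$-measure of $\{x\in\gamma^u\cap\Lambda:\tau(x)\ge n\}$: such an $x$ must have some iterate within leaf-distance $O(n^{-1/(2\alpha)})$ of the origin, the polynomial slowdown otherwise forcing a sojourn of at least $n$ iterates there, and pulling this set back through the $\gamma^u$-expanding iterates preceding the entry yields $\mu_{\gamma^u}(\{\tau\ge n\}\cap\gamma^u)=O(n^{-1/(2\alpha)})$; hence $\sum_i\tau_i\,\mu_{\gamma^u}(\Lambda_i^s)=\sum_n n\cdot O(n^{-1/(2\alpha)-1})<\infty$, and this convergence is exactly the content of the hypothesis $0<\alpha<\frac12$ in (K4).

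The step I expect to be the main obstacle is the uniform distortion estimate (Y4) for return segments whose orbits come arbitrarily close to the origin, where the hyperbolicity of $f$ collapses: one must work in the $(s_1,s_2)$-coordinates, in which the foliations stay essentially linear, to extract from (K4) sharp polynomial bounds on the variation of $\log \J f$, and then use the buffer of uniformly hyperbolic iterates guaranteed by \eqref{partition} to absorb the unavoidable distortion of a near-origin sojourn into the surrounding clean iterates; this is the mechanism of \cite{Kat79,PesSenZha14}. The remaining conditions are either formal consequences of the conjugacy $H$ (for (Y0)--(Y1)) or follow from area-preservation of $f$ together with the absolute continuity of its unstable foliation (for (Y2), and for organizing the bookkeeping in (Y5) with the help of the holonomy bounds of Proposition~\ref{Jacobian}); all the quantitative work reduces to the one-dimensional slowdown estimates recorded in \cite{PesSenZha14}.
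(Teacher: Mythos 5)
The paper states this proposition without proof (all details are deferred to \cite{PesSenZha14}), and your outline follows precisely the intended route: transport the first-return combinatorics from $T$ via the conjugacy $H$, obtain the $C^1$ product structure from the intrinsic invariant manifolds of $f$ rather than from $H$, use the buffer condition \eqref{partition} with $Q$ large to upgrade the non-uniform hyperbolicity to the uniform estimates (Y3)--(Y4) for the induced map, and derive (Y5) from the polynomial escape rate near the origin, with the hypothesis $\alpha<\tfrac12$ in (K4) entering exactly where you place it. Your sketch correctly identifies where each hypothesis is used and where the real analytic work lies; the remaining quantitative estimates are those carried out in \cite{PesSenZha14}.
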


\subsection{Equilibrium measures for the Katok map.} The following result describes existence, uniqueness and ergodic properties of the equilibrium measures associated to the geometric potential for the Katok map $f$.
\begin{thm}\label{Katok}
There exist $\varepsilon>0$ and $t_0<0$ such that for all $r_0<\varepsilon$ and every $t_0<t< 1$
there exists a unique ergodic equilibrium  measure $\mu_t$ associated to the geometric potential $\varphi_t=-t\log \J f$. 
This measure has exponential decay of correlations and satisfies the CLT with respect to a class of potential functions which includes all H\"older continuous functions on $\T^2$.
\end{thm}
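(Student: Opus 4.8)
The plan is to derive Theorem~\ref{Katok} by applying Theorems~\ref{geom_poten} and~\ref{geom_poten1} to the inducing scheme $\{H(S),\tau\}$ for the Katok map constructed in this section. By the Proposition preceding the statement, for a suitably (possibly refined) Markov element $P$ obeying~\eqref{partition} and for $r_0$ small, this scheme satisfies Conditions (I3), (I4) and (Y0)--(Y5), and by Proposition~\ref{kat2}(2) it satisfies $S_n\le e^{h_1n}$ with $h_1<h_{\mathrm{top}}(T)=h_{\mathrm{top}}(f)=\log\lambda$, the constant $h_1$ being fixed by $T$ and $P$ alone, independently of $r_0$. Since the inducing time is the first return time to the inducing domain, $S^*=\varnothing$, so Condition (L2) holds with $h=h_1$; Condition (L1) is checked as in~\cite{PesSenZha14}, using that the $T$-preimages of a boundary point of $P$ are dense in $\T^2$ and, by~\eqref{partition}, avoid $D_{r_0}$, which keeps the intermediate images $f^k(J)$, $J\in S$, thin in the stable and short in the unstable direction for $k$ in the central range of $[0,\tau(J)]$. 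Thus the only hypothesis of Theorem~\ref{geom_poten} left to verify is the entropy gap~\eqref{SN}, i.e.\ $h_1<-\int\varphi_1\,d\mu_1$.

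To verify~\eqref{SN} I would identify $\mu_1$ with Lebesgue measure $m$. As $f$ is an area-preserving $C^\infty$ diffeomorphism with nonzero Lyapunov exponents off the origin (Proposition~\ref{kat1}), the Pesin entropy formula gives $h_m(f)=\int\log\J f\,dm$, so $h_m(f)+\int\varphi_1\,dm=0=P_L(1)$ by Lemma~\ref{pres-est}; since $m$ is liftable (Kac's formula makes the first-return time $m$-integrable), the uniqueness in Lemma~\ref{pres-est} forces $\mu_1=m$ and hence $-\int\varphi_1\,d\mu_1=\int\log\J f\,dm$. Now $\log\J f\equiv\log\lambda$ off the perturbation region $D_{r_1}$ and $0\le\log\J f\le\log\lambda$ everywhere, so $-\int\varphi_1\,d\mu_1\ge\log\lambda\,(1-m(D_{r_1}))\to\log\lambda$ as $r_0\to0$. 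Since $h_1<\log\lambda$ is fixed, one may shrink $\varepsilon$ so that $r_0<\varepsilon$ keeps $-\int\varphi_1\,d\mu_1$ above, and bounded away from, $h_1$; this is~\eqref{SN} with $h=h_1$. Theorems~\ref{geom_poten}(2)--(3) and~\ref{geom_poten1} then produce $t_0<0$ such that for every $t_0<t<1$ there is a measure $\mu_t$ that is the unique equilibrium measure for $\varphi_t$ among all $\mu$ with $h_\mu(f)>h_1$, is ergodic, has exponential decay of correlations and satisfies the CLT for a class of observables containing all H\"older functions on $\T^2$.

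Finally I would check that \emph{every} equilibrium measure $\mu$ for $\varphi_t$ with $t_0<t<1$ has $h_\mu(f)>h_1$ (and so $\mu(Y)=1$, the $Y$-complement carrying no invariant measure of entropy above $h_1$), which promotes the uniqueness above to uniqueness among all invariant measures. Put $\ell(\mu)=\int\log\J f\,d\mu\in[0,\log\lambda]$; Ruelle's inequality gives $\ell(\mu)\ge h_\mu(f)$, and combining this with $h_\mu(f)+\int\varphi_t\,d\mu=P(\varphi_t)\ge P_L(t)\ge(t-1)\int\varphi_1\,d\mu_1=(1-t)h_{\mu_1}(f)$ (Lemma~\ref{pres-est}) yields $h_\mu(f)\ge h_{\mu_1}(f)$ when $0\le t<1$ and $h_\mu(f)\ge h_{\mu_1}(f)-|t|\,(\log\lambda-h_{\mu_1}(f))$ when $t<0$. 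Since the $t_0$ above equals $(h_1-h_{\mu_1}(f))/(\log\lambda-h_{\mu_1}(f))$, both bounds stay above $h_1$ for $t_0<t<1$, and Theorem~\ref{Katok} follows.

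The step I expect to be the main obstacle is the bookkeeping behind~\eqref{SN}: the radius $\varepsilon$, the Markov element $P$, the constant $h_1<\log\lambda$ and the integer $Q$ of~\eqref{partition} all have to be fixed from the distortion and hyperbolicity estimates \emph{before}, and independently of, the slow-down radius $r_0$, after which $r_0$ (and $\varepsilon$ once more) is shrunk to push $-\int\varphi_1\,d\mu_1=\int\log\J f\,d(\mathrm{Lebesgue})$ strictly and uniformly past $h_1$; the only other delicate point, already settled in~\cite{PesSenZha14}, is the verification of Condition (L1) for a map whose singular set is a single point.
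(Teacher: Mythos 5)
Your existence argument follows the paper's route: the paper likewise verifies \eqref{SN} by noting $h_1<h_{\mathrm{top}}(T)=\log\lambda$ is fixed by $T$ and $P$, identifies $\mu_1$ with the area $m$, and uses that $\int\log \J f\,dm$ can be pushed arbitrarily close to $\log\lambda$ by shrinking $r_0$ (and $r_1$); Theorems~\ref{geom_poten} and \ref{geom_poten1} then give a unique equilibrium measure $\mu_t$ \emph{among measures charging the base of the tower over $P$}, with the stated ergodic properties. Note that the paper does not invoke Condition (L1) or Theorem~\ref{liftability} here at all: since the inducing time is a first return time, every measure with $\mu(P)>0$ is automatically liftable, which is all that is used.

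The genuine gap is in your last step, where you upgrade to uniqueness among all invariant measures. Ruelle's inequality plus Lemma~\ref{pres-est} does show that any classical equilibrium measure $\mu$ for $\varphi_t$ has $h_\mu(f)>h_1$, but that is only one of the two hypotheses of Theorem~\ref{liftability}; the other is $\mu(W)>0$, and your parenthetical assertion that ``the $Y$-complement carries no invariant measure of entropy above $h_1$'' is exactly what needs proof and is not supplied. The tower is built over a single Markov element $P$, and nothing you have said rules out an invariant measure of large entropy that gives zero weight to $P$ (the subsystem of orbits avoiding $P$ is a nontrivial subshift whose entropy must be compared to $h_1$ by a separate argument, which you do not give). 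The paper closes this gap differently: it runs the whole construction over \emph{every} Markov element $\tilde P$ satisfying \eqref{partition}, obtaining for each a unique equilibrium measure $\tilde\mu_t$ among measures charging $\tilde P$; topological transitivity of $f$ forces all these measures to coincide (each charges every open subset of its base, and some iterate of such a set meets the other base); and for $r_0$ small the good elements cover the complement of a neighborhood of the origin, so the only remaining candidate is $\delta_0$, which is eliminated by the pressure comparison $P(\delta_0)=0<(1-t)h_{\mu_1}(f)\le P(\mu_t)$. Either you adopt that multi-tower argument, or you must actually prove that every ergodic measure with $h_\mu(f)>h_1$ charges $W$ (e.g.\ by bounding the topological entropy of the subsystem avoiding $\Int P$ by $h_1$), before your Ruelle-inequality estimate can do the work you want it to do.
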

\begin{proof}
First, observe that by statement (2) of Proposition~\ref{kat2} 
$h_1<h_{top}(T)=\log \lambda$. For sufficiently small $r_0>0$ and every $\varepsilon>0$ one can choose $r_1>0$ from~\eqref{numbers} such that 
$$
\left|\int\log \J f\,dm-\log\lambda\right|<\varepsilon.
$$ 
It follows that $h_1<-\int \varphi_1\,d\mu_1$ where $\mu_1=m$ is the area.
Theorems~\ref{geom_poten} and \ref{geom_poten1} and the fact that the inducing time is the first return time imply the existence of $t_0(P)<0$ such that for every $t_0(P)<t<1$ 
there exists a unique equilibrium measure $\mu_t$ associated to the geometric potential $\varphi_t$ among all measures $\mu$ for which $\mu(P)>0$. Note that 
$\mu_t(U)>0$ for every open set $U\subset P$. 

To prove uniqueness first observe that given another element $\tilde P$ of the Markov partition which satisfies Condition~\eqref{partition}, one can repeat the above argument: there hence exists $\tilde{t}_0=t_0(\tilde{P})$ such that for every $\tilde{t}_0<t<1$ 
there exists a unique equilibrium measure $\tilde{\mu}_t$ associated to the geometric potential among all measures $\mu$ for which 
$\mu(\tilde{P})>0$. Moreover $\tilde{\mu}_t(\tilde{U})>0$ for every open set $\tilde{U}\subset\tilde{P}$. Since the map $f$ is topologically transitive, for every open sets $U\subset P$ and 
$\tilde{U}\subset\tilde{P}$ there exists an integer $k$ such that $f^k(U)\cap\tilde{U}\ne\emptyset$. Therefore, 
$\mu_t=\tilde{\mu}_t$. Note that if the number $r_0$ in the construction of the Katok map is sufficiently small, the union of partition elements that satisfy Condition~\eqref{partition} form a closed
set whose complement is a neighborhood of zero. The desired result follows by observing that the only measure which does not charge any element of the Markov partition that lies outside this neighborhood is the Dirac measure $\delta_0$ at the origin whose pressure is $P(\delta_0)=0$ whereas $P(\mu_t)>0$ for every 
$$
t_0:=\max_{P\in\mathcal{P}, P\cap D_{r_1}=\emptyset}t_0(P)<t<1.
$$
The desired result follows. 
\end{proof}
\begin{rmk}
It follows from the entropy formula that the area is an equilibrium measure for $\varphi_1$. However there is another equilibrium measure that is the Dirac measure at zero. It is expected that the equilibrium measure $\mu_1$ (the area) has polynomial decay of correlations.
\end{rmk}

\bibliographystyle{alpha}
\bibliography{BiblioSenti2013}
 
\end{document}